\newtheorem{thm}{Theorem}
\newtheorem{lem}[thm]{Lemma}
\newtheorem{remark}[thm]{Remark}
\begin{document}

\title{Analytical and numerical insights into wildfire dynamics: Exploring the advection--diffusion--reaction model}

\author[1]{Cordula Reisch}
\author[2,$\ddagger$]{Adri\'an Navas-Montilla}
\author[3,4]{Ilhan \"Ozgen-Xian}

\affil[1]{Institute of Partial Differential Equations, Technische
  Universit\"at Braunschweig, Germany}
\affil[2]{Fluid Dynamics Technologies Group, Aragon Institute of Engineering Research (I3A), University of Zaragoza, Spain}
\affil[3]{Theoretical Ecohydrology Lab, Institute of Geoecology, Technische Universit\"at
  Braunschweig, Germany}
\affil[4]{Earth and Environmental Sciences Area, Lawrence Berkeley National Laboratory, California, USA}

\affil[$\ddagger$]{Corresponding author at Aragon Institute of Engineering Research (I3A), University of Zaragoza, C/Mariano Esquillor s/n, 50018 Zaragoza, Spain. E-mail: {\tt anavas@unizar.es}}

\date{February 01, 2024}

\maketitle

\begin{abstract}

Understanding the dynamics of wildfire is crucial for developing management and intervention strategies. Mathematical and computational models can be used to improve our understanding of wildfire processes and dynamics. This paper presents a systematic study of a widely used advection--diffusion--reaction wildfire model with non-linear coupling. The importance of single mechanisms is discovered by analysing hierarchical sub-models. Numerical simulations provide further insight into the dynamics. As a result, the influence of wind and model parameters such as the bulk density or the heating value on the wildfire propagation speed and the remaining biomass after the burn are assessed. Linearisation techniques for a reduced model provide surprisingly good estimates for the propagation speed in the full model. 

\paragraph{Keywords} wildfire propagation; advection--diffusion--reaction equation; dynamical systems; hierarchical model family

\paragraph{MSC} 35K57, 35Q80, 65M12, 92F05
\end{abstract}


\section{Introduction}

Wildfires are a crucial part of the Earth system that have shaped ecosystems since the appearance of terrestrial plants \cite{Bowman:2009}.  While many ecosystems depend on a certain wildfire regime to sustain themselves \cite{Harrison:2021}, wildfires pose a significant threat to anthropogenic biomes, destroying property and, in the worst case, leading to casualties.  In addition, large and uncontrolled wildfires are often accompanied by air pollution and significant change of the hydrological function of the impacted area that may harm water security \cite{Maina:2019}.  At the time of writing, a large wildfire in Nova Scotia, Canada, is making headlines as it burned down a record area of about ten million hectares.  A huge wildfire is ravaging the Greek island of Corfu, another wildfire at the coast of Algier claimed 34 lives, and yet another wildfire is approaching the city of Palermo, Italy.  In previous years, Californian \cite{Li:2021, Maina:2019} as well as Australian wildfires \cite{Clarke:2022} wreaked havoc on the regions.  The Mediterranean region that has been historically prone to wildfires, is scorched by early wildfires, see, for example \cite{Touma:2021, Viedma:2018}.  South East Asia is reportedly bracing for increasing wildfires, see for example \cite{Nguyen:2023}.   The African Sahel region is more frequently subject to wildfires and agricultural fires that may in return trigger prolonged droughts \cite{Ichoku:2016}. For a current global review of wildfire dynamics, see the excellent discussion in \cite{Harrison:2021}. 

In general, the anthropogenic climate change continues to create favorable conditions for wildfires and both frequency and intensity of wildfires are expected to increase \cite{Doerr:2016, SenandeRivera:2022, UNEP:2022}.  Previously unaffected regions such as Northern Europe have also become vulnerable to wildfires due to climate change \cite{Carnicer:2022}. As wildfire risk increases, the demand to better understand and predict wildfire dynamics through mathematical models and their numerical solution to support management decisions is increasing.

Wildfire dynamics consist of two main processes: (i) ignition and (ii) propagation \cite{Harrison:2021}.  Neither of these two processes are completely understood \cite{Harrison:2021}.  In this context, mathematical and computational models can help to understand key dynamics of the fire--vegetation relationship \cite{Crompton:2022c}. A comprehensive review of the modelling of wildfire propagation dynamics can be found in \cite{sullivan_wildland_2009-1, sullivan_wildland_2009-2, sullivan_wildland_2009-3}.  In this work, we focus on a specific class of physically-based wildfire propagation models, the advection--diffusion--reaction equation-based (ADR) model.  This ADR model has been initially proposed in \cite{Asensio:2002} for wildfire propagation in an idealised two-dimensional (2D) forest. It was numerically explored in \cite{Burger:2020a, Burger:2020b}.
Mathematical properties---like the existence of travelling wave solutions sensu \cite{hadeler_travelling_1975}---were studied for simplified models in \cite{babak_effect_2009}.

Variations and extensions of this ADR wildfire model exist in the literature. For example, in \cite{baranovskiy_mathematical_2022}, a three-dimensional (3D) form of the ADR wildfire model is presented that allows to distinguish between crown and surface forest fires.  In \cite{Asensio:2005}, the model was extended with convection above the fire by means of coupling the ADR wildfire model with an atmospheric flow model.  Another variation of the ADR wildfire model is discussed in \cite{viegas_two-dimensional_2018}, which explicitly accounts for the underlying chemical reactions of combustion and spread in the framework of an ADR. In \cite{mandel_wildland_2008}, an ensemble Kalman filter technique is used to account for measured temperatures in running ADR-based wildfire simulations.

While the ADR wildfire model is widely used and adapted, the individual model components and their influence on the dynamics have not yet been systematically studied. 
In this paper, we formulate sub-models of the ADR wildfire model, each including combinations of the mechanisms advection, diffusion, reaction and ambient cooling. We study the influence of these hierarchical sub-models on the full ADR wildfire model's emerging non-linear behaviour using both analytical and numerical methods.  The rest of this paper is structured as follows:
In Sec.~\ref{sec:wildfiremodel}, the ADR wildfire model is derived using physics-based arguments and the numerical solver used to solve the resulting system of equations is presented. The sub-models are analysed in Sec.~\ref{sec:submodels}.
The interplay of all mechanisms is studied in Sec.~\ref{sec:full}, first for a one-dimensional domain and then for a two-dimensional more realistic setting. Finally, conclusions are drawn in Sec.~\ref{sec:conclusions}.

\section{The wildfire propagation model}\label{sec:wildfiremodel}

\subsection{Advection--diffusion--reaction equation-based wildfire model}

The model presentation in the literature, for example \cite{Asensio:2002, Burger:2020a, Burger:2020b}, is focused on mathematical aspects. Here, we provide a  physics-based description of the model. The model is characterised by the biomass fraction $Y(\mathbf{x},t)$ and the temperature $T(\mathbf{x},t)$, defined inside a two-dimensional spatial domain $\Omega\subset\mathbb{R}^2$, with $\mathbf{x}=(x,y)\in\Omega$ being the Cartesian coordinates in space and $t > 0$ being time, see Fig.~\ref{fig:domain}. The temperature is a measure for the internal energy and we will derive the model by energy considerations. Bulk parameters are used to characterise the properties of the biomass and air mixture, which are considered as a continuum.

\begin{figure}
	 \centering
		 \includegraphics[width=0.49\textwidth]{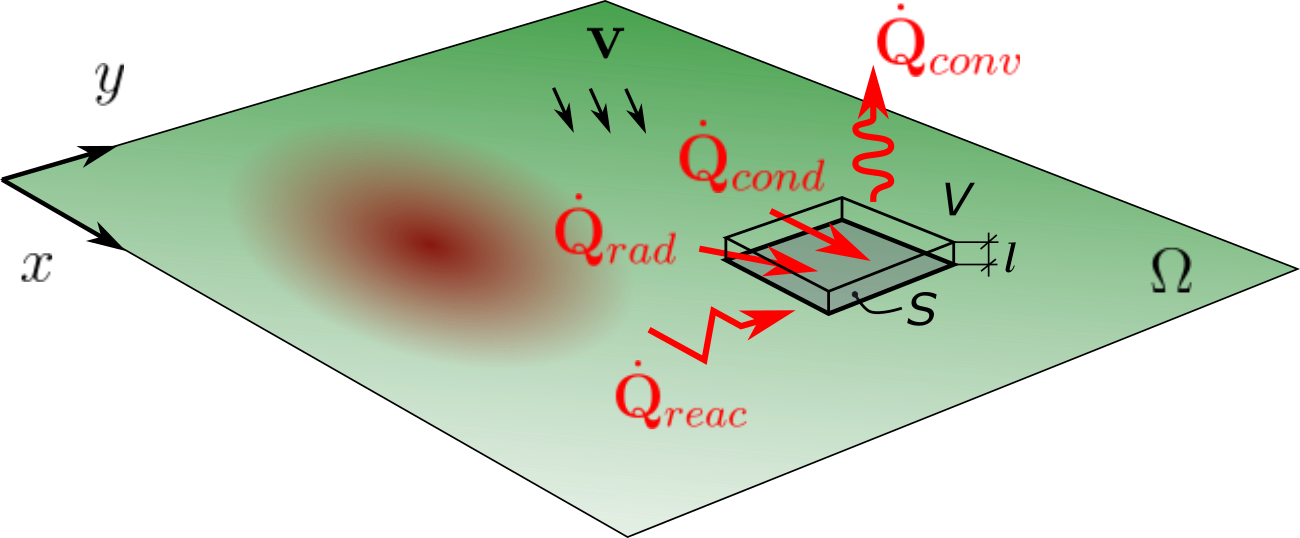}
	 \caption{Representation of the two-dimensional spatial domain $\Omega$ and heat fluxes relevant to wildfire spreading: the conductive flux $\dot{{Q}}_{\mathrm{cond}}$, the convection heat flux $\dot{{Q}}_{\mathrm{conv}}$, radiation heat flux $\dot{{Q}}_{\mathrm{rad}}$, and the reaction heat flux $\dot{{Q}}_{\mathrm{reac}}$. The convective flux is driven by the wind velocity $\mathbf{v}$. All heat fluxes can be defined for an arbitrary control volume $V \subset \Omega$. Colours inside the domain are meant to represent the temperature distribution.}
	 \label{fig:domain}
 \end{figure}

Our starting point is a fixed control volume defined as $V=S\times [0,\ell]$ inside a forest ecosystem. Here, $S \subset \Omega$ is the horizontal extent of the forest and $\ell$ is the forest layer thickness or depth in the vertical direction, associated with the tree height in the forest.  The boundary of the control volume is denoted by $\partial V$. The equation for the conservation of internal energy inside $V$ is
\begin{equation}\label{eq:energy1}
    \frac{d}{d t}\int_{V}\rho e \, \mathrm{d}V + \int_{\partial V}\rho e \mathbf{v}\cdot \mathbf{n} \, \mathrm{d}\Gamma = \dot{{Q}}_\mathrm{cond}- \dot{{Q}}_\mathrm{conv} + \dot{{Q}}_\mathrm{rad} + \dot{{Q}}_\mathrm{reac} \,,
\end{equation}
where $e$ is the specific internal energy per unit mass, $\rho$ is the density defined below, and $\dot{{Q}}_\mathrm{cond}$, $\dot{{Q}}_\mathrm{conv}$, $\dot{{Q}}_\mathrm{rad}$ and $\dot{{Q}}_\mathrm{reac}$ are the conduction, convection, radiation and reaction heat fluxes, respectively. A conceptual sketch of these fluxes is shown in Fig. \ref{fig:domain}.  Note that compressibility effects and viscous dissipation in the gaseous phase are neglected in the energy balance.  The advection velocity, $\mathbf{v}$, is related to the wind velocity. 
We assume that all modelled quantities are homogenised over the forest layer thickness $\ell$ and thus, represent depth-averaged quantities on a two-dimensional horizontal plane defined by $S$. Consequently, heat fluxes and wind velocity only have horizontal components. The fuel mass fraction $Y$ relates to the density $\rho$ as
\begin{equation}\label{eq:massfraction}
Y(\mathbf{x}, t) = \frac{m_f(\mathbf{x},t)}{m_{f_0}(\mathbf{x})}, \text{ with } \rho_{0} (\mathbf{x}) = \frac{m_{f_0}(\mathbf{x})}{V} \text{ and } \rho_f (\mathbf{x},t) = \frac{m_{f}(\mathbf{x},t)}{V},
\end{equation}
where $m_f$ is the fuel mass and $m_{f_0}$ is the maximum fuel mass at $t=0$. Here, we assume that the total mass of air in $V$ is negligible such that the total mass $m_t = m_a+m_f \approx m_f$, and therefore $\rho \approx \rho_{f}$.

Assuming a constant density  $\rho_0$ and specific heat $C$ in $V$, the internal energy can be expressed as $e = CT$. Therefore, the temperature is a quantity for the internal energy. This allows us to rewrite Eq. \eqref{eq:energy1}  as
\begin{equation}\label{eq:energy2}
    \rho_0 C \left(\frac{d}{d t}\int_{V} T \, \mathrm{d}V + \int_{\partial V}T \mathbf{v}\cdot \mathbf{n} \, \mathrm{d}\Gamma \right)= \dot{Q}_\mathrm{cond}- \dot{Q}_\mathrm{conv} + \dot{Q}_\mathrm{rad} + \dot{Q}_\mathrm{reac}.
\end{equation}
The conduction heat flux $\dot{Q}_\mathrm{cond}$ is given by Fourier's law as
\begin{equation}\label{eq:heatflux1}
    \dot{Q}_\mathrm{cond}=
    \int_{\partial V} k\nabla T\cdot \mathbf{n} \, \mathrm{d}\Gamma,
\end{equation}
where $k$ is the thermal conductivity. The convection heat flux is given by Newton's cooling law
\begin{equation}\label{eq:heatflux2}
    \dot{Q}_\mathrm{conv}= \int_{ S} h_0(T-T_{\infty})\, \mathrm{d}\Gamma,
\end{equation}
with $h_0$ being the convection coefficient and $T_{\infty}$ being the temperature of the air over the surface. The radiation heat flux can be written in the form of a non-linear conduction heat flux as
\begin{equation}\label{eq:heatflux3}
    \dot{Q}_\mathrm{rad}=  
    \int_{\partial V} 4\sigma  \epsilon \delta T^3 \nabla T\cdot \mathbf{n} \, \mathrm{d} \Gamma,
\end{equation}
provided that the optical path length for radiation $\delta$ is smaller than the characteristic length of the control volume.  The constant $\sigma$ is the Stefan-Boltzmann constant and $\epsilon$ is an emissivity factor.  Finally, the reaction heat flux is given by
\begin{equation}\label{eq:heatflux41}
    \dot{Q}_\mathrm{reac}= \int_{V} \dot{\rho}_f H \, \mathrm{d}V,
\end{equation}
where $H$ is the combustion heat per unit mass of fuel and
\begin{equation}\label{eq:heatflux42}
    \dot{\rho}_f =\rho_{0} \frac{\partial Y}{\partial  t}
\end{equation}
is the fuel mass disappearance rate per unit volume, with
 \begin{equation}\label{eq:heatflux43}
    \frac{\partial Y}{\partial  t}= -s(T)A\exp{\left(- 
    \frac{T_\mathrm{ac}}{T} 
    \right)}Y
\end{equation}
being the rate of variation of the mass fraction of fuel, given by the Arrhenius law where  
$T_\mathrm{ac}=\frac{E_A}{R}$ 
with the activation energy $E_A$, the universal gas constant $R$, a pre-exponential factor $A$  and the activation function $s(T)$ given by
 \begin{equation}  \label{eq:s}
    s(T)= \left\{ \begin{array}{ccc}
      0 & \hbox{if} & T<T_\mathrm{pc}  \\
      1 & \hbox{if} & T\geq T_\mathrm{pc}
     \end{array}\right.
 \end{equation}
 that triggers the reaction when the temperature is higher than $T_\mathrm{pc}$. By combining the previous definitions and inserting them into Eq. \eqref{eq:heatflux41}, we obtain
\begin{equation}\label{eq:heatflux44}
    \dot{Q}_\mathrm{reac}= \int_{V} s(T)\rho_{0} HA\exp{\left(- 
    \frac{T_\mathrm{ac}}{T} 
    \right)}Y \, \mathrm{d}V.
\end{equation}

Using the Green--Gau{\ss} divergence theorem and considering that the flow is incompressible, so $\nabla\cdot \mathbf{v}=0$, we can rewrite Eq. \eqref{eq:energy2} as
\begin{equation}\label{eq:energy3}
\begin{aligned}
    \rho_0 C \int_{V} \left(\frac{\partial T}{\partial t}  + \mathbf{v}\cdot\nabla T \right)\, \mathrm{d}V  =\\ \int_{V} \nabla\cdot \left( \left( 4\sigma  \epsilon\delta T^3 + k \right) \nabla T \right) \, \mathrm{d}V & -\int_{ V} h(T-T_{\infty})\, \mathrm{d}V + \int_{V} s(T)\rho_{0} HA\exp{\left(- \frac{T_\mathrm{ac}}{T} \right)}Y \, \mathrm{d}V \,,
    \end{aligned}
\end{equation}
with $h=h_0/l$, which allows to obtain
\begin{equation}\label{eq:pde1}
    \rho_0 C  \left(\frac{\partial T}{\partial t}  + \mathbf{v}\cdot\nabla T \right)  =  \nabla\cdot \left( \left( 4\sigma   \epsilon \delta T^3 + k \right) \nabla T \right)     - h(T-T_{\infty}) +  s(T)\rho_{0} HA\exp{\left(- 
\frac{T_\mathrm{ac}}{T} 
\right)}Y.
\end{equation}
Finally, the complete wildfire propagation model is written as
 \begin{equation}  \label{eq:model1}
     \left\{ \begin{aligned}
       \rho_0 C   \left(\frac{\partial T}{\partial t}  + \mathbf{v}\cdot\nabla T \right) & =  \nabla\cdot \left( k_t(T) \nabla T \right)     - h(T-T_{\infty}) +  \Psi(T)\rho_{0} H Y,   \\
 \frac{\partial Y}{\partial  t} &= -\Psi(T)Y,
     \end{aligned}\right.
 \end{equation}
where we define $k_t(T)=4\sigma \epsilon\delta T^3 + k$ and $\Psi(T)=s(T)A\exp{\left(- 
\frac{T_\mathrm{ac}}{T} 
\right)}$. For simplicity, we assume linear diffusion, so $k_t(T)=k$. 
The temperature $T=T(\mathbf{x},t)>0$ and the biomass $Y=Y(\mathbf{x},t)\in[0,1]$ are the model variables. 
The model parameters are summarised in Tab.~\ref{table:general-parameters} with values taken from the literature when available. Parameters that could not be assigned a value range based on the literature are estimated, often through heuristic arguments. In general, many of these parameters may vary in space due to spatial heterogeneity in the environment. Examples for potentially spatially heterogeneous parameters are the bulk density $\rho_0$ and the specific heat $C $. In this work, we will not consider spatial variations of this kind.  To complete the mathematical model, Eq.~\eqref{eq:model1} must be subjected to appropriate initial and boundary conditions, IC and BC, respectively. Many choices exist, for example
  \begin{equation}  \label{eq:cc}
     \left\{ \begin{aligned}
        &\text{IC: } \quad &T(\mathbf{x},0)=T_0(\mathbf{x}),\, Y(\mathbf{x},0)=Y_0(\mathbf{x}), \quad &\mathbf{x}\in \Omega, \\
&\text{BC: } &\left(  k \nabla T    - \rho_0 {C}  \mathbf{v} T \right) \cdot \mathbf{n}=0, \quad &\mathbf{x}\in \partial\Omega,\, t>0.
      \end{aligned}\right.
 \end{equation}
The zero-flux boundary condition defined in Eq.~\eqref{eq:cc} could be applied if the domain is chosen such that the wildfire does not propagate outside of it. 

\begin{table}
\centering
\begin{tabular}{llcccc}
Parameter & Symbol   & Value or range & Units &  Reference & Default (simulations)\\
\hline
Bulk density & $\rho_0$  & [40,640] &\hbox{kg·m$^{-3}$}  & \cite{Asensio:2002}  & 40 \\
Specific heat & {$C$}  & [1.0, 3.0] &\hbox{kJ·kg$^{-1}$·K$^{-1}$}  & \cite{Asensio:2002} & 1.0\\
Pre-exponential factor & $A$  & $0.05$   &\hbox{s$^{-1}$}  & \cite{Grasso:2020} & 0.05\\
Heating value & $H$  & $[100,10000]$  &\hbox{kJ·kg$^{-1}$}  & \cite{Asensio:2002} & 4000 \\
Convection coefficient & $h$  & $[0.1 , 10]$   &\hbox{kW·m$^{-3}$·K$^{-1}$}  & N/A & 4\\
Ambient air temperature & $T_{\infty}$  & $[ 280, 320]$   &\hbox{K}  & N/A & 300\\
Ignition temperature & $T_\mathrm{pc}$  & $[ 400, 500]$   &\hbox{K}  & \cite{Grasso:2020} & 400 \\
Activation temperature & $T_\mathrm{ac}$  & $[ 400, 500]$   &\hbox{K}  & \cite{Grasso:2020} & 400 \\
Thermal diffusivity  & $k$ & $[0, 4]$ & \hbox{kW·m$^{-1}$· K$^{-1}$}  & \cite{Asensio:2002, Burger:2020a} & 2
\end{tabular}
\caption{ Summary of model parameters and their range used in this study for realistic scenarios with references when applicable. Note that parameter values outside of this range may be used for sub-model analysis. }
\label{table:general-parameters}
\end{table}

\subsection{Numerical solver}\label{section:solver}

The ADR wildfire model outlined in Eq.~\eqref{eq:model1} and Eq.~\eqref{eq:cc} is discretised using a high order finite volume scheme. The weighted essentially non-oscillatory (WENO) method \cite{jiang1996} with up to 7-th order of accuracy is used to compute the convective fluxes. The WENO method provides stable reconstructions of cell variables without Gibbs oscillations in the presence of sharp gradients or discontinuities in the solution, which may be the case for the problems herein considered. Second order central differences are used for the diffusive fluxes. This combination proved to be suitable for other applications in \cite{NAVASMONTILLA2023, navas2019depth}.

Different Runge--Kutta integrators of varying order are used in combination with Strang splitting to step forward in time, similar to the numerical solver in \cite{Burger:2020a}. The time integration is as follows: First, the reaction terms in Eq.~\eqref{eq:model1} are evaluated to update the model variables for half a time step using an implicit second order Runge--Kutta (RK2) method. Then, the advection and diffusion terms in the partial differential equation are evaluated to evolve the system for one full time step using a third order Strong Stability Preserving Runge--Kutta (SSP-RK3) method. Finally, the reaction terms are evaluated again for half a time step using the RK2 method.

The verification of the numerical solvers for a spatially integrated model and for the spatially explicit ADR wildfire model are presented in Appendix \ref{sec:validation1} and \ref{sec:validation2}, respectively. The need of very high order of accuracy in space for the discretisation of the convective fluxes is also motivated in  Appendix \ref{sec:validation2}.

\section{Analysis of hierarchical sub-models}\label{sec:submodels}

The ADR wildfire model in Eq.~\eqref{eq:model1} includes the interacting mechanisms advection, diffusion, and reaction that lead to highly non-linear behaviour.  See, for example \cite{perthame_parabolic_2015} for a discussion of the manifold behaviour of reaction--diffusion equations. In order to better understand the effects of these individual mechanisms, we formulate  sub-models of system~\eqref{eq:model1} with fewer mechanisms.  All models together form a model family and every model will provide insights into some characteristic behaviour of the system. In the following, for every sub-model, we summarise relevant results from the literature, prove analytical results, and carry out numerical simulations.  We interpret the analytical and numerical results in the light of wildfire management applications and specify open questions.

\subsection{Dynamical system analysis for the spatially lumped sub-model}\label{sec:ODE}

We start by analysing the model in a spatially integrated or ``lumped'' version, neglecting any spatial mechanisms. The ADR wildfire model in Eq.~\eqref{eq:model1} reduces to an ordinary differential equation model of the form
\begin{equation}\label{eq:modelODE}
\left\{
 \begin{aligned}
       \rho_0 C   \frac{d T}{d t}  & = f_T( T,Y) :=  - h(T-T_{\infty}) +  \Psi(T)\rho_{0} H Y,   \\
 \frac{d Y}{d t}&= f_Y(T,Y) := -\Psi(T)Y,
 \end{aligned}
 \right.
 \end{equation}
with initial conditions $T(0)=T_0 > T_\mathrm{pc}$ and $Y(0)=Y_0>0$.
The combustion function $\Psi(T)$ is given by $\Psi(T)=s(T)A\exp \left (- \frac{T_\mathrm{ac}}{T} \right )$ with the discontinuous activation function $s(T)$ in Eq.~\eqref{eq:s} depending on the ignition temperature $T_\mathrm{pc}$.
The discontinuity of $s(T)$ results in a discontinuity of the differential equation.

\begin{thm}
The initial value problem in Eq.~\eqref{eq:modelODE} has a unique solution for $(T(0), Y(0)) \in [T_\mathrm{pc}, T_\mathrm{max}] \times [0,1]$ and $t\in [0, \tau]$ where $\tau$ is the maximal time with $T(\tau)\geq T_\mathrm{pc}$.
$T_\mathrm{max}$ is a finite maximum value for the temperature.
\end{thm}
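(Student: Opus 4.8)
The plan is to exploit the fact that, although the right-hand side of Eq.~\eqref{eq:modelODE} is discontinuous across $\{T = T_\mathrm{pc}\}$ because of the switch $s(T)$ in Eq.~\eqref{eq:s}, on the closed region $R := [T_\mathrm{pc}, T_\mathrm{max}] \times [0,1]$ one has $s(T)\equiv 1$, so that $\Psi(T) = A\exp(-T_\mathrm{ac}/T)$ is smooth and, in particular, Lipschitz there. The argument then has three ingredients: (i) a priori bounds that confine the trajectory to a compact set of the form $R$ for as long as $T \ge T_\mathrm{pc}$; (ii) local existence and uniqueness from the classical Picard--Lindel\"of theorem applied to the system with $s \equiv 1$; and (iii) a continuation argument up to the exit time $\tau$.

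First I would derive the a priori estimates. For the biomass, since $\Psi(T) > 0$ whenever $T \ge T_\mathrm{pc} > 0$ and $Y(0) \in [0,1]$, the equation $\dot Y = -\Psi(T)Y$ gives $Y(t) = Y_0 \exp\!\big(-\int_0^t \Psi(T(s))\,ds\big) \in [0, Y_0] \subseteq [0,1]$, so $0 \le Y \le 1$ is invariant. For the temperature, inserting $0 \le Y \le 1$ and $0 < \Psi(T) \le A$ into the temperature equation yields the differential inequality $\rho_0 C\, \dot T \le -h(T - T_\infty) + A\rho_0 H$. A comparison (Gr\"onwall) argument against the linear ODE with equality then shows $T(t) \le \max\{T_0,\ T_\infty + A\rho_0 H / h\} =: T_\mathrm{max}$, which is the claimed finite maximal temperature; together with the defining restriction $T \ge T_\mathrm{pc}$ this keeps the trajectory in $R$. (One also has $T \ge \min\{T_0, T_\infty\} > 0$ by comparison from below, so $\Psi$ is never singular.)

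On $R$ the vector field $(f_T, f_Y)$ with $s \equiv 1$ is $C^1$, hence globally Lipschitz on the compact set $R$, so Picard--Lindel\"of gives a unique local $C^1$ solution from $(T_0, Y_0)$. To extend it I would define $\tau := \sup\{\, t \ge 0 : T(s) \ge T_\mathrm{pc}\ \text{for all } s \in [0,t] \,\}$ and invoke the standard maximal-interval dichotomy: because the a priori bounds rule out blow-up and prevent $T$ from exceeding $T_\mathrm{max}$ or $Y$ from leaving $[0,1]$, the solution can fail to continue only by $T$ reaching the boundary value $T_\mathrm{pc}$, which by construction happens exactly at $\tau$; hence the solution exists on $[0,\tau]$. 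On that interval the system with $s \equiv 1$ coincides with Eq.~\eqref{eq:modelODE}, so this is a solution of the original problem, and uniqueness follows because any two solutions of Eq.~\eqref{eq:modelODE} both stay in $\{T \ge T_\mathrm{pc}\}$ on their common interval and there solve the same Lipschitz system.

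The only genuine subtlety --- and the reason the statement is confined to $[0,\tau]$ --- is the discontinuity of $s$: one must ensure the solution is never continued \emph{through} $\{T = T_\mathrm{pc}\}$ into the region $s = 0$, and the comparison bounds plus the definition of $\tau$ are precisely what guarantee this. A minor degenerate case is $T_0 = T_\mathrm{pc}$ with $\dot T(0) < 0$, for which $\tau = 0$ and the claim is vacuous; whenever the reaction term dominates at ignition (as with the default parameters of Tab.~\ref{table:general-parameters}), or $T_0 > T_\mathrm{pc}$, one has $\tau > 0$.
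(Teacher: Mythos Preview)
Your argument is correct and follows the same core approach as the paper: restrict to the region $T \ge T_\mathrm{pc}$ where $s(T)\equiv 1$, observe that the right-hand side is then $C^1$ with bounded derivatives on the compact set, and invoke Picard--Lindel\"of. You are more thorough than the paper's proof, which simply records that the derivatives are bounded on the given domain and applies Picard--Lindel\"of directly; your explicit a~priori bounds and continuation argument are in fact deferred by the paper to a separate boundedness theorem stated afterwards.
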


\begin{proof}
For $T\geq T_\mathrm{pc}$ the activation function $s(T)$ has the value 1.
The right-hand side of Eq.~\eqref{eq:modelODE} is continuous w.r.t. $T$ and $Y$.
Additionally, the right-hand side is differentiable w.r.t. $T$ and $Y$, and the derivatives are bounded for the given domain.
Consequently, the Picard-Lindel\"{o}f theorem guarantees the existence of a unique solution.

For $T< T_\mathrm{pc}$, the activation function is zero.
For $Y>0$ and $T \in (T-\epsilon, T+ \epsilon)$, $\epsilon >0$, the function $f_T(T,Y)$ in $\frac{\mathrm{d}}{\mathrm{d}t}T=f_T(T,Y)$ is discontinuous.
The existence of a unique solution is therefore not guaranteed and worth a proof.
\end{proof}

\begin{lem}
The discontinuity of the functions $f_T$ and $f_Y$ for $T= T_\mathrm{pc}$ does not affect the validity of the differential equations describing the combustion process.
By interpreting the state $(T_\mathrm{pc}, Y)$ as new initial values for the differential equation
\begin{align}
\begin{aligned}
	\frac{\mathrm{d}T}{\mathrm{d}t} & = -h(T-T_\infty),\\
	\frac{\mathrm{d}Y}{\mathrm{d}t} & = 0,
\end{aligned}
\end{align}
the Picard-Lindel\"{o}f theorem guarantees the existence of a unique solution for the case $T<T_\mathrm{pc}$.
\end{lem}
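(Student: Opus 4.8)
The plan is to split the forward evolution at the instant the temperature falls to the ignition level $T_\mathrm{pc}$ and to re-initialise the initial value problem there, now with the second (combustion-off) system displayed in the statement; one then checks that the two pieces glue into a single, well-defined forward trajectory on which uniqueness holds.

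First I would collect what is already available. On $[0,\tau]$ the combustion branch of Eq.~\eqref{eq:modelODE} (the case $s\equiv 1$) has a unique $C^1$ solution by the Theorem, and this solution stays in $[T_\mathrm{pc},T_\mathrm{max}]\times[0,1]$, hence does not blow up. If $\tau=\infty$ the trajectory never enters $\{T<T_\mathrm{pc}\}$ and there is nothing to prove, so assume $\tau<\infty$. By continuity and the maximality of $\tau$ one then has $T(\tau)=T_\mathrm{pc}$; set $Y_\tau:=Y(\tau)\in[0,1]$. Since $T\ge T_\mathrm{pc}$ on $[0,\tau]$ with equality at the right endpoint, the left derivative necessarily satisfies $\dot T(\tau^-)\le 0$.

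Next I would observe that on $\{T<T_\mathrm{pc}\}$ one has $s(T)=0$, so $\Psi(T)=0$ and Eq.~\eqref{eq:modelODE} collapses exactly to the linear system in the statement: $Y$ is frozen and $T$ obeys Newton cooling towards $T_\infty$. Its right-hand side is affine in $(T,Y)$, hence continuous and globally Lipschitz, so the Picard--Lindel\"of theorem applies to the datum $(T_\mathrm{pc},Y_\tau)$ and yields a unique solution for $t\ge\tau$; explicitly $Y(t)\equiv Y_\tau$, while $T(t)$ relaxes exponentially from $T_\mathrm{pc}$ towards $T_\infty$.

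Finally one closes with a self-consistency (no-return) check, which is the only genuinely non-routine point. Because $T_\mathrm{pc}>T_\infty$ by the parameter ranges in Tab.~\ref{table:general-parameters}, the continuation $T(t)$ is strictly decreasing and stays in $[T_\infty,T_\mathrm{pc})$ for all $t>\tau$; hence the branch $s\equiv 0$ used on the continuation is indeed the active one throughout, and the temperature never re-ignites. Moreover no Filippov sliding motion along the discontinuity surface $\{T=T_\mathrm{pc}\}$ can occur: on the combustion side $\dot T(\tau^-)\le 0$, and on the cooling side $\dot T<0$ at $T=T_\mathrm{pc}$ (since $T_\mathrm{pc}>T_\infty$), so both one-sided vector fields point into $\{T<T_\mathrm{pc}\}$; the concatenation of the combustion-phase solution on $[0,\tau]$ with this continuation is therefore a well-defined forward solution, unique on $[0,\tau]$ by the Theorem and on $(\tau,\infty)$ by the Picard--Lindel\"of argument above, hence globally. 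The step that genuinely requires care is exactly this transversality/no-return argument, which relies essentially on the ordering $T_\mathrm{pc}>T_\infty$; a minor additional nuisance is the degenerate configuration $\dot T(\tau^-)=0$, in which the combustion trajectory is merely tangent to the level $T_\mathrm{pc}$ from above — the continuation still exits because the cooling field is strictly inward there, but this case should be acknowledged explicitly.
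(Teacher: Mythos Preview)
Your argument is correct, and its core idea --- restart the IVP at $(T_\mathrm{pc},Y_\tau)$ with the combustion-off system and invoke Picard--Lindel\"{o}f on the resulting affine ODE --- is exactly what the paper does, except that the paper offers no separate proof at all: the entire justification is folded into the lemma statement itself. What you add beyond the paper is the no-return/transversality discussion (relying on $T_\mathrm{pc}>T_\infty$) and the exclusion of Filippov sliding along $\{T=T_\mathrm{pc}\}$. These are worthwhile additions: without them one has not actually shown that the glued trajectory is a solution of the discontinuous system in any honest sense, nor that it does not re-ignite or chatter at the switching surface. The paper silently takes all of this for granted.
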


Consequently, the system given in Eq.~\eqref{eq:modelODE} has a unique solution for given initial values $(T_0, Y_0) \in [0, T_\mathrm{max}] \times [0,1]$.

When modelling the dynamics of a wildfire, the boundedness of the temperature by a lower and an upper bound is desired, where the latter may depend on the initial values.  The system state $Y$ describes a fuel mass density---compare Eq.~\eqref{eq:massfraction}---and should be bounded therefore by $Y\in [0,1]$.  We then prove that the solutions of system \eqref{eq:modelODE} are bounded.

\begin{thm}
The solutions of \eqref{eq:modelODE} are bounded with $(T,Y) \in D= [T_\mathrm{low}, T_\mathrm{max}] \times [0,1]$ for any initial values $(T_0, Y_0) \in D$ and $T_\mathrm{max}$ sufficiently large.
\end{thm}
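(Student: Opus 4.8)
The plan is to show that the rectangle $D=[T_\mathrm{low},T_\mathrm{max}]\times[0,1]$ is positively invariant by bounding the $Y$-component and the two one-sided excursions of $T$ separately, each through an elementary scalar comparison argument. The two structural facts that carry the whole proof are: (i) $0\le\Psi(T)\le A$ for every $T$, since $s(T)\in\{0,1\}$ and $\exp(-T_\mathrm{ac}/T)\le 1$; and (ii) the reaction heat source $\Psi(T)\rho_0 H Y$ is non-negative whenever $Y\ge 0$. By the existence and uniqueness already established (Theorem, Lemma and the remark that follows), a solution exists and is unique; it then suffices to show it cannot reach $\partial D$ from inside, equivalently that the right-hand side points into $D$ (or is tangent) along each edge.

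First I would dispose of $Y$. Since $\Psi(T)\ge 0$, the second equation gives $\tfrac{d}{dt}Y=-\Psi(T)Y\le 0$ on $\{Y\ge 0\}$, and the integrating-factor representation $Y(t)=Y_0\exp\big(-\int_0^t\Psi(T(s))\,\mathrm{d}s\big)$ shows $0<Y(t)\le Y_0\le 1$ when $Y_0\in(0,1]$, while $Y\equiv 0$ if $Y_0=0$. Hence $Y(t)\in[0,1]$ throughout, with no extra hypothesis, so in particular $Y\le 1$ can be used freely in the $T$-equation.

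Next, the upper bound on $T$. Inserting $\Psi(T)\le A$ and $Y\le 1$ into the first equation yields the differential inequality $\rho_0 C\,\tfrac{dT}{dt}\le -h(T-T_\infty)+A\rho_0 H$. I would compare $T$ with the solution $z$ of the linear ODE $\rho_0 C\,z'=-h(z-T_\infty)+A\rho_0 H$, $z(0)=T_0$, which converges monotonically to the equilibrium $z_\star:=T_\infty+A\rho_0 H/h$; the scalar comparison principle then gives $T(t)\le\max\{T_0,z_\star\}$. Thus any $T_\mathrm{max}\ge z_\star$ — this is exactly the meaning of ``$T_\mathrm{max}$ sufficiently large'' — keeps $T(t)\le T_\mathrm{max}$ for every admissible $T_0$; equivalently, on the edge $T=T_\mathrm{max}$ one has $\rho_0 C\,\tfrac{dT}{dt}\le -h(T_\mathrm{max}-T_\infty)+A\rho_0 H\le 0$, so the field points inward. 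For the lower bound I would discard the non-negative reaction term to get $\rho_0 C\,\tfrac{dT}{dt}\ge -h(T-T_\infty)$ and compare with the pure Newton-cooling flow, which converges to $T_\infty$; hence $T(t)\ge\min\{T_0,T_\infty\}$, and any choice $T_\mathrm{low}\le T_\infty$ makes the edge $T=T_\mathrm{low}$ inward-pointing as well. Combining the three estimates shows the trajectory stays in $D$, and in particular does not blow up in finite time, so the solution is global.

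I do not anticipate a genuine obstacle: the crux is simply recognising that $\Psi$ is bounded by $A$, which tames the only potentially large term. The one point needing care — more bookkeeping than difficulty — is the discontinuity of the vector field at $T=T_\mathrm{pc}$: on any subinterval where $T<T_\mathrm{pc}$ the system reduces to the cooling system of the preceding Lemma, for which the two one-sided comparison estimates used above hold verbatim (indeed both remain valid irrespective of the value of $s(T)$), so the bounds patch together consistently across $T=T_\mathrm{pc}$ and the invariance of $D$ is unaffected.
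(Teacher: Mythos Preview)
Your proposal is correct and follows essentially the same approach as the paper: both use the bound $0\le\Psi(T)\le A$ to first trap $Y$ in $[0,1]$, then insert $\Psi\le A$ and $Y\le 1$ into the $T$-equation to obtain the linear upper comparison with equilibrium $T_\infty+A\rho_0 H/h$, and finally drop the non-negative reaction term for the lower comparison with the pure Newton-cooling flow. Your use of the integrating-factor formula for $Y$ and your explicit framing in terms of inward-pointing vector fields along $\partial D$ are cosmetic refinements, and your remark that the one-sided estimates hold uniformly in $s(T)$ so that the bounds patch across $T=T_\mathrm{pc}$ makes the discontinuity treatment slightly tidier than the paper's case-by-case discussion.
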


\begin{proof}
We start with showing the boundedness of $Y$.

Assume $Y(0) \in [0,1]$.
For any $T$
\begin{align}\label{eq:estimate_Psi}
	0 \leq \Psi(T) = s(T) A \exp \left(- \frac{T_\mathrm{ac}}{T} \right ) \leq A
\end{align}
yields.
Consequently,
$$ \frac{\mathrm{d}Y}{\mathrm{d}t}  = - \Psi (T) Y \geq - AY$$
yields as long as $Y$ is positive. Since  $\bar{Y} (t)= Y_0 e^{-At} >0$ gives a lower bound for $Y(T)$, any solution $Y(T)$ for $Y_0 \geq0$ is bounded by $[0, Y_0]$.
Even more, the case $Y(t)=0$ only occurs if $Y_0=0$.
The biomass $Y(t)$ is positive for any meaningful initial condition $Y_0 \in (0, 1]$.

The temperature $T$ is bounded from above because $Y$ is bounded by $1$ and Eq.~\eqref{eq:estimate_Psi} gives
\begin{align}
  \frac{\mathrm{d}T}{\mathrm{d}t} =  - \frac{h}{\rho_0 C}(T-T_{\infty}) +  \frac{1}{C } \Psi(T) H Y
    \leq \frac{AH}{C } -  \frac{h}{\rho_0 C }(T-T_{\infty}).
\end{align}
This ensures a negative derivative for $T > T_\mathrm{max} =\frac{\rho_0}{h} A H + T_\infty$.
Besides, the temperature is bounded from below by $T_\mathrm{low} = \min \{T_0, T_\infty\}$:
If $T_0 < T_\infty < T_\mathrm{pc}$, the temperature increases by
$ \frac{\mathrm{d}T}{\mathrm{d}t} = -h(T- T_\infty)$ until $T= T_\infty$.
On the other hand, if $ T_\infty <T_0 < T_\mathrm{pc}$, the temperature decreases with a lower bound $T_\infty$.
These two cases neglect the activation of the combustion process.
If, in the last case, $T_0 > T_\mathrm{pc}> T_\infty$ yields, the combustion process starts and $T_\infty$ again gives a lower bound for $T$.
\end{proof}

Consequently, the solutions of System~\eqref{eq:modelODE} are bounded, which is crucial for the interpretation of the solutions as a model for real world processes.  Further, the switching behaviour of the combustion function leads to infinitely many stationary points, which differ in the biomass at ambient temperature.

\begin{thm}
    System~\eqref{eq:modelODE} has a continuous line of stationary points $(T_\infty, Y^\star)$ with $Y^\star \in [0,1]$.
\end{thm}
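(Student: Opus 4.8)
The plan is to read off the stationary points directly from the algebraic conditions $f_T(T,Y)=0$ and $f_Y(T,Y)=0$, exploiting that the combustion function $\Psi$ is switched off below the ignition temperature.

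First I would note that for the admissible parameter ranges in Tab.~\ref{table:general-parameters} one has $T_\infty<T_\mathrm{pc}$, so that $s(T_\infty)=0$ and hence $\Psi(T_\infty)=0$; in particular $T=T_\infty$ lies in the region where the right-hand side of Eq.~\eqref{eq:modelODE} is smooth, so the usual notion of a stationary point applies there without any subtlety coming from the discontinuity of $s$. Substituting $T=T_\infty$ then gives $f_T(T_\infty,Y)=-h(T_\infty-T_\infty)+\Psi(T_\infty)\rho_0 HY=0$ and $f_Y(T_\infty,Y)=-\Psi(T_\infty)Y=0$, and these identities hold for every value of $Y$. Letting $Y^\star$ range over $[0,1]$ therefore yields a one-parameter family of equilibria $(T_\infty,Y^\star)$, i.e.\ a continuous line segment in the $(T,Y)$ phase plane, which is exactly the claim.

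For completeness I would also check that there are no further stationary points inside the invariant region $D=[T_\mathrm{low},T_\mathrm{max}]\times[0,1]$. If $T\geq T_\mathrm{pc}$ then $\Psi(T)>0$, so $f_Y=0$ forces $Y=0$ and then $f_T=0$ reduces to $-h(T-T_\infty)=0$, i.e.\ $T=T_\infty<T_\mathrm{pc}$, a contradiction; if instead $T<T_\mathrm{pc}$ then $\Psi(T)=0$ and $f_T=0$ already forces $T=T_\infty$. Hence $\{(T_\infty,Y^\star):Y^\star\in[0,1]\}$ is precisely the set of equilibria.

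The argument has no substantive obstacle: the result is a direct consequence of the activation switch being off at ambient temperature, which decouples the biomass equation and makes every point of the segment $T=T_\infty$ stationary. The only point requiring care is invoking the strict inequality $T_\infty<T_\mathrm{pc}$ from the parameter table --- and it is precisely this feature that produces a whole line rather than an isolated equilibrium, encoding the physically expected fact that an arbitrary amount of unburnt fuel $Y^\star$ can persist once the temperature has relaxed to ambient.
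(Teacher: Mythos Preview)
Your proof is correct and follows essentially the same approach as the paper: both identify the equilibria by solving $f_T=0$, $f_Y=0$ algebraically using that $\Psi$ vanishes below $T_\mathrm{pc}$ together with $T_\infty<T_\mathrm{pc}$. Your presentation is in fact somewhat cleaner --- you explicitly flag the parameter inequality $T_\infty<T_\mathrm{pc}$ as the crucial input and handle the case $T\geq T_\mathrm{pc}$, $Y=0$ by a direct contradiction, whereas the paper argues that case via the dynamics (``the temperature decreases until $T=T_\infty$'').
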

\begin{proof}
    The change of biomass is only zero for either $Y=0$ or $s(T)=0$.
    In the first case, for $Y=0$, the temperature decreases until $T=T_\infty$ and we find the stationary point $(T_\infty,0)^\mathrm{T}$.
    The switching function is zero for $T<T_\mathrm{pc}$.
    Then, the derivative of the temperature is zero if $T=T_\infty$, which gives us infinitely many stationary states $(T_\infty, Y^\star)^\mathrm{T}$ with $Y^\star \in [0,1]$.
\end{proof}

For gaining further understanding of the system dynamics, we visualize in Fig.~\ref{fig:ODE_map} the phase space portrait of System~\eqref{eq:modelODE} using the  default parameters from Tab.~\ref{table:general-parameters}. The heating and cooling regions in the phase space are indicated with red and blue colours. These regions are separated for $T> T_\mathrm{pc}$ by the tipping line with $ \frac{\mathrm{d}T}{\mathrm{d}t}  =0$ given by
\begin{equation}\label{eq:tipping}
         Y_\mathrm{tip}(T)= \frac{h}{\rho_0 H A}  \mathrm{exp}\left({\frac{T_\mathrm{ac}}{T}} \right)  (T - T_\infty).
\end{equation}
The dashed gray tipping line in Fig.~\ref{fig:ODE_map} corresponds to the maximal temperature for any trajectory.  The continuous line of stationary points of the system is plotted with a green line in Fig.~\ref{fig:ODE_map}.

\begin{figure}
	 \centering
		 \includegraphics[width=0.75\textwidth]{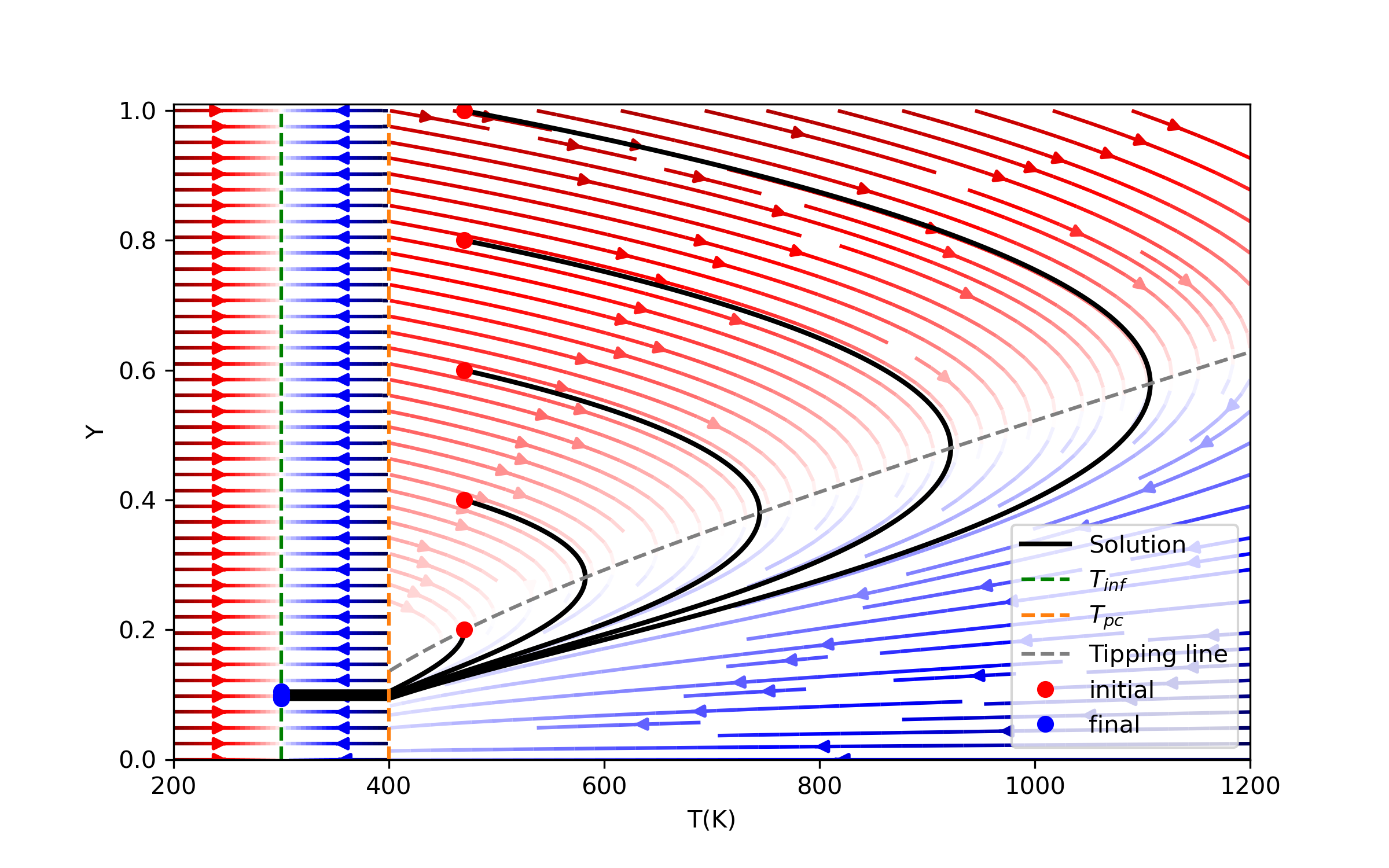}
	 \caption{Phase space portrait of the system, showing the heating (red) and cooling (blue) regions of the trajectories, separated by the dashed gray line.
  Trajectories of the solutions for the initial conditions $T_0=470$~K and $Y_0=\left\{ 0.2, 0.4, 0.6, 0.8, 1.0 \right\}$ are plotted with black continuous lines.}
	 \label{fig:ODE_map}
 \end{figure}

\begin{remark}
In the case where combustion takes place, so for initial values $T_0 > T_\mathrm{pc}$, only stationary points with $Y^\star < Y_\mathrm{tip}(T_\infty)$ are stable and occur as results of the systems dynamics.
\end{remark}
We conclude the analysis of the spatially lumped model: Even if the equations contain noncontinuous terms, the existence of unique and bounded solutions is guaranteed. The set of stationary points is a continuous line with varying values for the biomass. Based on these analytical results, we now investigate the system numerically. 

System~\eqref{eq:modelODE} is solved for the initial conditions $T_0=470$ K and $Y_0=\left\{ 0.2, 0.4, 0.6, 0.8, 1.0 \right\}$ using the numerical solver described in Sec.~\ref{section:solver}. The evolution in time of the temperature and biomass is depicted in Fig.~\ref{fig:ODE_example11}.  The solutions show the switching behaviour of the combustion function for $T = T_\mathrm{pc} = 400$ K resulting in an unsteady slope of the temperature decrease.  The trajectories of these numerical solutions are also depicted in the phase space portrait in Fig.~\ref{fig:ODE_map} using black continuous lines.  Starting with the same initial temperature and various initial biomass values, the trajectories lead to comparable final values for the biomass, and the ambient temperature. The influence of the initial biomass on the remaining biomass after the combustion process is therefore small. This is more clearly visualised in Fig.~\ref{fig:Terminal_map}, where the terminal biomass $Y^{\star}$ is plotted for various combinations of initial temperature $T_0$ and initial biomass values $Y_0$. Fig.~\ref{fig:Terminal_map} shows that a majority of the initial value combinations result in similar terminal biomass at the end of the simulation.

 \begin{figure}
	 \centering
		 \includegraphics[width=0.99\textwidth]{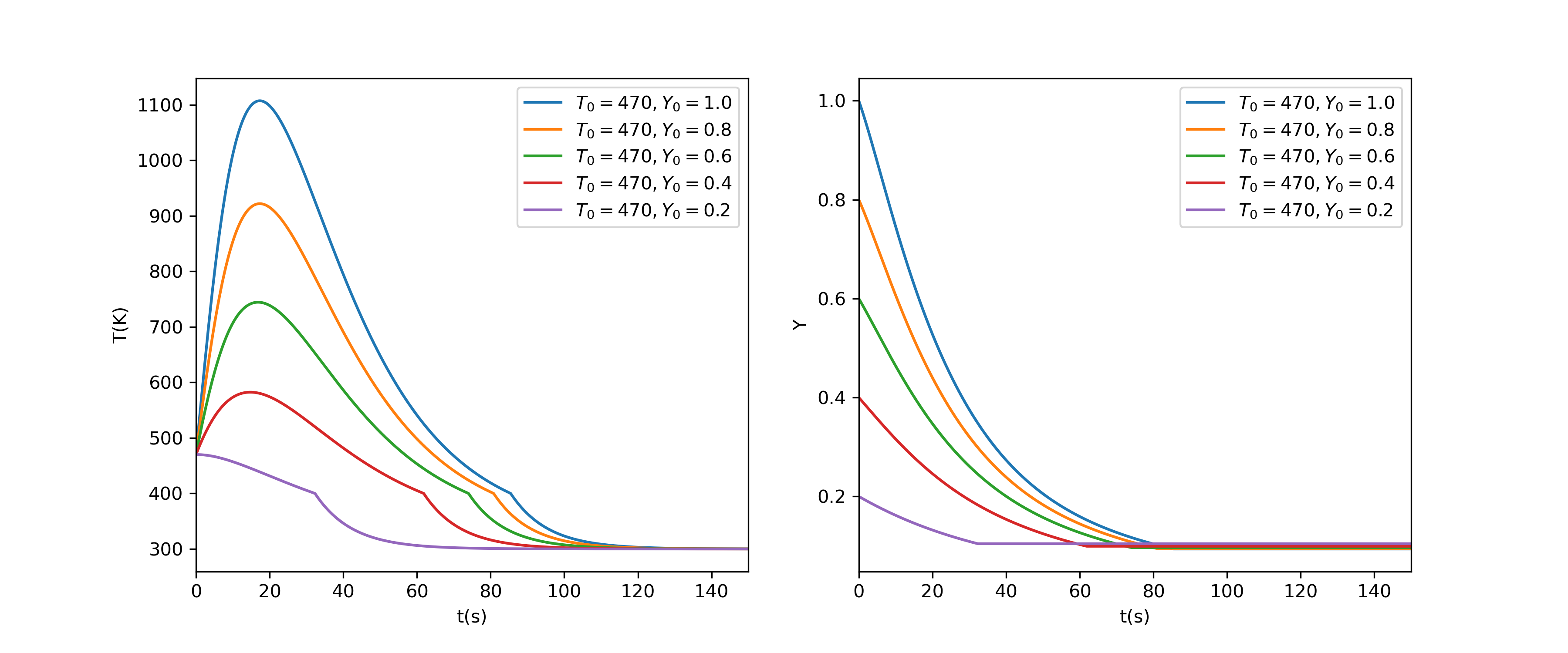}
	 \caption{Evolution in time of the temperature (left) and biomass concentration (right) for the initial conditions $T_0=470$ K and $Y_0=\left\{ 0.2, 0.4, 0.6, 0.8, 1.0 \right\}$. }
	 \label{fig:ODE_example11}
 \end{figure}

  \begin{figure}
	 \centering
		 \includegraphics[width=0.75\textwidth]{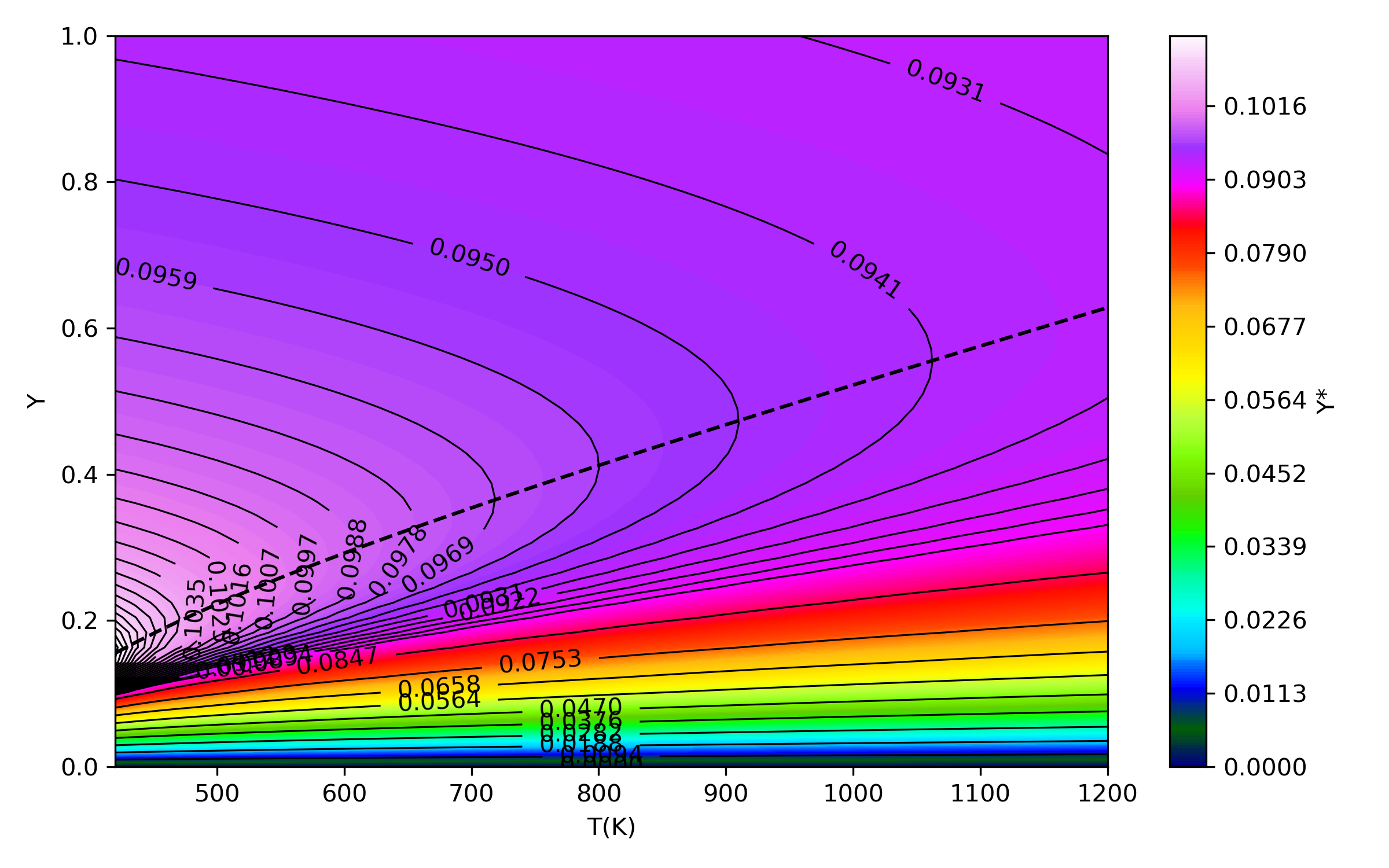}
	 \caption{Terminal biomass $Y^{\star}$ as a function of the initial temperature and biomass. }
	 \label{fig:Terminal_map}
 \end{figure}

Fig. \ref{fig:sensitivity_tipping} shows the tipping line in Eq. \eqref{eq:tipping} for different values of the relevant parameters. We consider as the base parameterisation the values in Tab.~\ref{table:general-parameters}. In the plot, $m$ is a factor scaling
the denominator of $\frac{h}{\rho_0 H A}$, where $\rho_0$ is the bulk density. 
A larger parameter $m$ leads to a smaller upper bound for the remaining biomass.  This can be interpreted as a larger bulk density leading to a higher temperature in the burning process and therefore to a lower biomass in the end.  A larger activation temperature $T_\mathrm{ac}$ leads to a larger upper bound for the remaining biomass. This can be interpreted as a reduction of the availability starting a combustion process, perhaps linked to forest management decisions.

  \begin{figure}
	 \centering
	\includegraphics[width=0.99\textwidth]{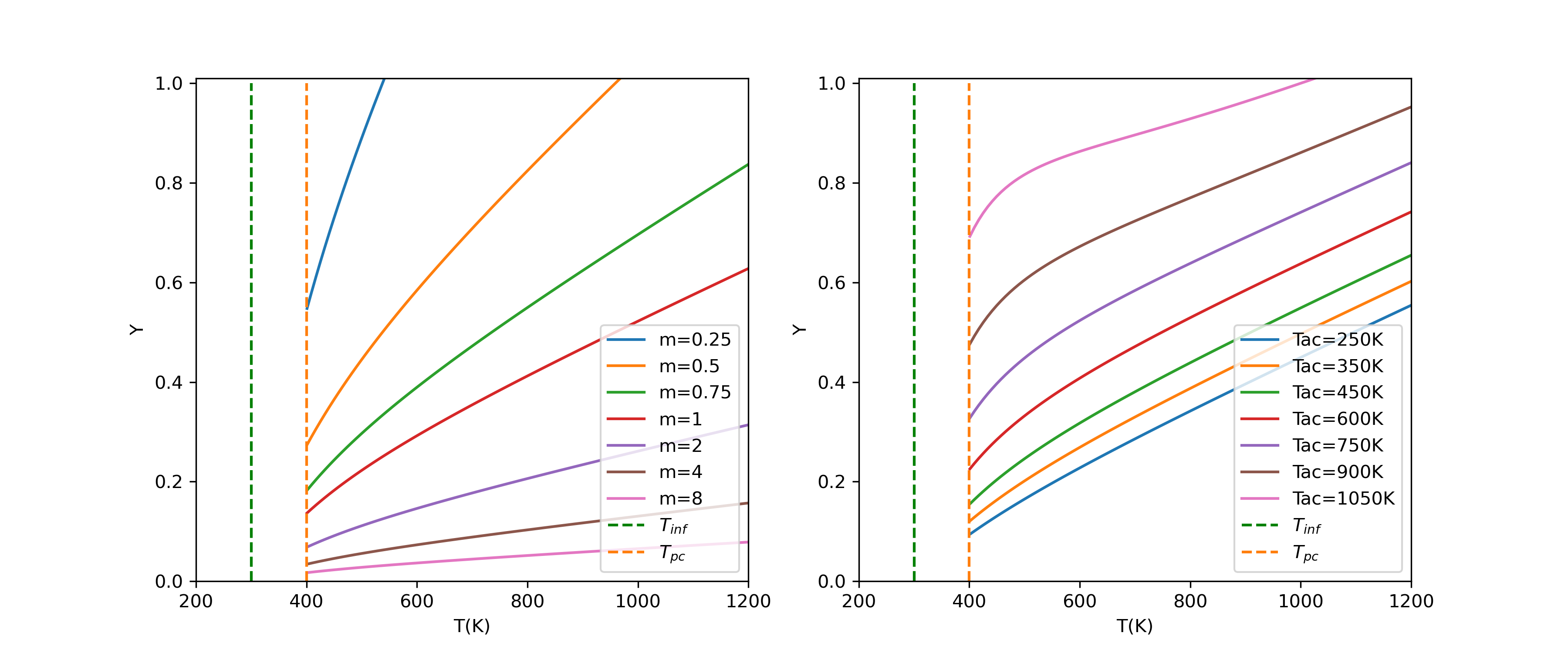}
	 \caption{Sensitivity analysis of the tipping line in Eq.~\eqref{eq:tipping}. Left: Change of the  factor $m$ in $ m\rho_0 H A$. Right: Change of the activation temperature $T_\mathrm{ac}$.  Note that the range of variation is larger than the realistic parameter ranges in Table~\ref{table:general-parameters} for highlighting the changes. }
	 \label{fig:sensitivity_tipping}
 \end{figure}

These effects on the tipping line are as well visible if we calculate the sensitivity of the tipping line on the parameters directly.
The sensitivity of $Y_\mathrm{tip}$ on $\rho_0$ is given by
\begin{equation*}
    \frac{\partial Y_\mathrm{tip}}{\partial \rho_0}= - \frac{h}{\rho_0^2 H A }  \mathrm{exp}\left(\frac{T_\mathrm{ac}}{T_\infty}\right) (T-T_\infty).
\end{equation*}
Enlarging the parameter $\rho_0$ therefore has a negative effect on the tipping line, shifting it downwards. The same is observed for the heating value $H$.  In comparison, the influence of the activation temperature is positive, due to
\begin{equation*}
    \frac{\partial Y_\mathrm{tip}}{\partial T_\mathrm{ac}}=  \frac{h}{\rho_0 H A } \frac{1}{T_\infty}  \mathrm{exp}\left(\frac{T_\mathrm{ac}}{T_\infty}\right)  (T-T_\infty).
\end{equation*}
These expressions of the sensitivity therefore support the findings in Fig.~\ref{fig:sensitivity_tipping}.

The tipping line in the phase space of System~\eqref{eq:modelODE} gives an estimation for the maximum of the remaining biomass $Y^\star$ after the combustion process.  The minimum of $Y^\star$ is an interesting quantity as well.  As the system is non-linear and the dynamics are cut off by the switching function for $T=T_\mathrm{pc}$, a prediction of the steady states depending on the parameters and the initial values is challenging, even if System~\eqref{eq:modelODE} is linearised.  For example, a linearisation of the non-linear combustion term is
\begin{equation}
\exp (-T_\mathrm{ac}/ T) \geq - \delta T_\mathrm{pc} + \delta T,
\end{equation}
where $\delta$ is chosen such that the linearised function is larger than 1 for $T>T_\mathrm{max}$.  With this linearisation, the system reads
\begin{equation}
\begin{aligned}
   \frac{\mathrm{d}T}{\mathrm{d}t} &= - \frac{h}{ \rho_0 C} (T-T_\infty) + \frac{HA}{C} (- \delta T_\mathrm{pc} + \delta T) Y, \\
       \frac{\mathrm{d}Y}{\mathrm{d}t}  &= - A (- \delta T_\mathrm{pc} + \delta T) Y,
\end{aligned}
\end{equation}
and can be reformulated to
\begin{equation}
    \frac{\mathrm{d}T}{\mathrm{d}Y}  = \frac{h}{\rho_0 C A \delta} \frac{T-T_\infty}{(T-T_\mathrm{pc})Y} - \frac{H}{C} .
\end{equation}
The solution of the homogeneous equation is
\begin{equation}
    Y(T) = \frac{\rho_0  C A \delta}{h} (T_\infty -T_\mathrm{pc} ) \ln (T- T_\infty) + \frac{\rho_0 C A \delta}{h} T +c_\mathrm{int},
\end{equation}
where the integration constant $c_\mathrm{int}$ is a very large value, shifting the whole approximation above the domain of~$Y$.   This approach therefore is not giving useful bounds for the inhomogeneous and non-linear system.

The analysis of the lumped System~\eqref{eq:modelODE} shows the influence of some parameters on the tipping line and gives insight into the boundedness of solutions and the sensitivity of the model parameters.  However, the minimum of $Y^\star$ is further affected by diffusion and advection.  In what follows, we analyse the effect of these mechanisms on the solution through sub-models of the ADR wildfire model.  We start with combustion-free sub-models.

\subsection{Combustion-free sub-models}\label{sec:combustionfree}

Combustion-free sub-models of the ADR wildfire model neglect the combustion function $\Psi$ in Eq.~\eqref{eq:model1} to remove the discontinuity in the model.  This allows to investigate paths to stationary states that result primarily from energy dissipation and the development of advection-driven travelling wave solutions.  Here, we study two combustion-free sub-models: (i) a  reaction--diffusion sub-model without combustion in Sec.~\ref{sec:energydissipation} and (ii) a combustion-free ADR sub-model in Sec.~\ref{sec:cfADR}.

 \subsubsection{Energy dissipation in a combustion-free reaction--diffusion sub-model}
 \label{sec:energydissipation}

 First, we consider a combustion-free reaction--diffusion model for $\mathbf{x} \in \Omega \subset \mathbb{R}^d$ and constant diffusion. The reaction only describes the exchange with the ambient temperature and the word reaction is meant only in the context of reaction--diffusion equations. Without any combustion, the biomass is unchanged and $\frac{\mathrm{d}}{\mathrm{d}t}Y=0$.
 Further, we neglect advection, so $\mathbf{v}= \mathbf{0}$.
 The system in Eq.~\eqref{eq:model1} becomes
   \begin{equation} \label{eq:model_pure_rd}
     \left\{ \begin{aligned}
       \frac{\partial T}{\partial t}    =  \alpha \nabla \cdot \nabla T 
       -  \beta (T-T_{\infty})  
     \end{aligned}\right. \end{equation}
  with $\alpha=k/(\rho_0 {C})$ the thermal diffusivity and $\beta=h/(\rho_0 {C})$.
  We adapt the zero-flux boundary conditions from Eq.~\eqref{eq:cc} and use initial conditions with $T\geq T_\infty$ point-wise and a constant $Y(0,x)= Y_0 \in (0,1]$.

  For a given domain $\Omega \subset \mathbb{R}^d$, we define the eigenvalues $\lambda_k$ of the negative Laplacian with periodic boundary conditions as solutions $U_k(\mathbf{x})$ of
  \begin{equation}\label{eq:ev_Laplace}
      \begin{aligned}
          -\nabla \cdot \nabla U_k(\mathbf{x}) & = \lambda_k U_k(\mathbf{x}) && \text{ for } \mathbf{x} \in \Omega, \\
      \end{aligned}
  \end{equation}
All eigenvalues of Eq.~\eqref{eq:ev_Laplace} are real and non-negative.  Then, the solution of Eq.~\eqref{eq:model_pure_rd} can be calculated by hands of a Fourier approach and separation into a homogeneous and inhomogeneous problem.
With coefficients $c_k \in \mathbb{R}$, the solution reads
\begin{equation}\label{eq:sol_pure_rd}
    T(\mathbf{x},t) = T_\infty + \sum_{k=0}^\infty c_k \exp \left( - \frac{\alpha t} {\lambda_k + \beta} \right) U_k(\mathbf{x}).
\end{equation}
The coefficients can be determined by using the Fourier series of the initial conditions. Analogous formulations are possible for different boundary conditions as well.  The solution in Eq.~\eqref{eq:sol_pure_rd} is decaying in time towards the ambient temperature $T_\infty$.

\begin{remark}
    The combustion-free reaction--diffusion problem excluding advection  shows a decaying and levelling solution behaviour.
\end{remark}

This levelling effect gives the dissipation of the system, compare Eq.~\eqref{eq:heatflux2}.
The convection heat flux $\dot{Q}_\mathrm{conv}$ can be integrated over space, resulting in an expression for the energy dissipation rate as
\begin{equation}\label{eq:definition_energy}
    E(t) = \int_\Omega h (T-T_\infty ) \, \mathrm{d}V .
\end{equation}
The change of $E$ is then given by
\begin{equation}
\begin{aligned}
        \frac{\mathrm{d}}{\mathrm{d}t} E(t) &= \int_\Omega h \frac{\mathrm{d}}{\mathrm{d}t}V  = \int_\Omega h (\alpha \nabla \cdot \nabla T  - \beta(T-T_{\infty})) \, \mathrm{d}V
     = h \alpha \int_{\partial \Omega} \nabla T \cdot \mathbf{n} \, \mathrm{d}V  -h \beta \int_\Omega T-T_\infty \, \mathrm{d}V  \\
    & = -\beta E(t),
    \end{aligned}
\end{equation}
where we use the Green--Gau{\ss} divergence theorem and the zero-flux boundary conditions.  The differential equation in $E$ has the solution
\begin{equation}\label{eq:energy_dissipation}
    E(t) = E(0) \exp(- \beta t),
\end{equation}
clearly showing the dissipative nature of the spreading and levelling. The energy dissipation rate tends to zero as the system's temperature approaches the ambient temperature.

We explore the energy dissipation now numerically.
Fig.~\ref{fig:Energy_decay_sec_321} shows the solution of Eq.~\eqref{eq:model_pure_rd} for the parameters given in Tab.~\ref{table:general-parameters}. We impose periodic boundary conditions to ensure that the total energy is not affected by the fluxes across the domain boundaries.
 The periodic boundary conditions are reasonable in this example because there is no combustion process and the fuel is not burning. The initial condition is given by
\begin{equation}\label{eq:ic1}
   T(x,y,0) = 
		\left\{
		\begin{array}{lll}
      	400 \mbox{ K}  &\mbox{if} &  r(x,y)< 50 \mbox{ m} \\
        300 \mbox{ K} & \multicolumn{2}{l}{\mbox{otherwise}}  \\  
		\end{array}
		\right. 
\end{equation}
with 
\begin{equation}
r(x,y)=\sqrt{(x-x_1)^2+(y-y_1)^2}
\end{equation}
with $(x_1,y_1)=(500,500)$ m.
Fig.~\ref{fig:Energy_decay_sec_321}(left) shows the decrease of the energy dissipation rate, $E$, following Eq.~\eqref{eq:energy_dissipation} and for comparison and verification of the numerical solver, the value of $E$ computed by Eq.~\eqref{eq:definition_energy} for the numerical solution. Simulation snapshots at different time steps are shown in Fig.~\ref{fig:Energy_decay_sec_321}(right). The maximum temperature at every time step is denoted in each of the snapshots.  As time evolves, the heat diffuses from an initial maximum temperature in the centre of the domain.  The maximum temperature reduces from $T=400$ K to almost ambient temperature $T=300.8$ K after $t=1000$ s. Energy dissipation is maximum in the beginning and tends to zero when the solution evolves in time, due to the levelling effect.  The energy dissipation rate for the numerical solution fits the analytical estimation, which is a verification for the numerical solver.

 \begin{figure}
	 \centering
		 \includegraphics[width=0.8\textwidth]{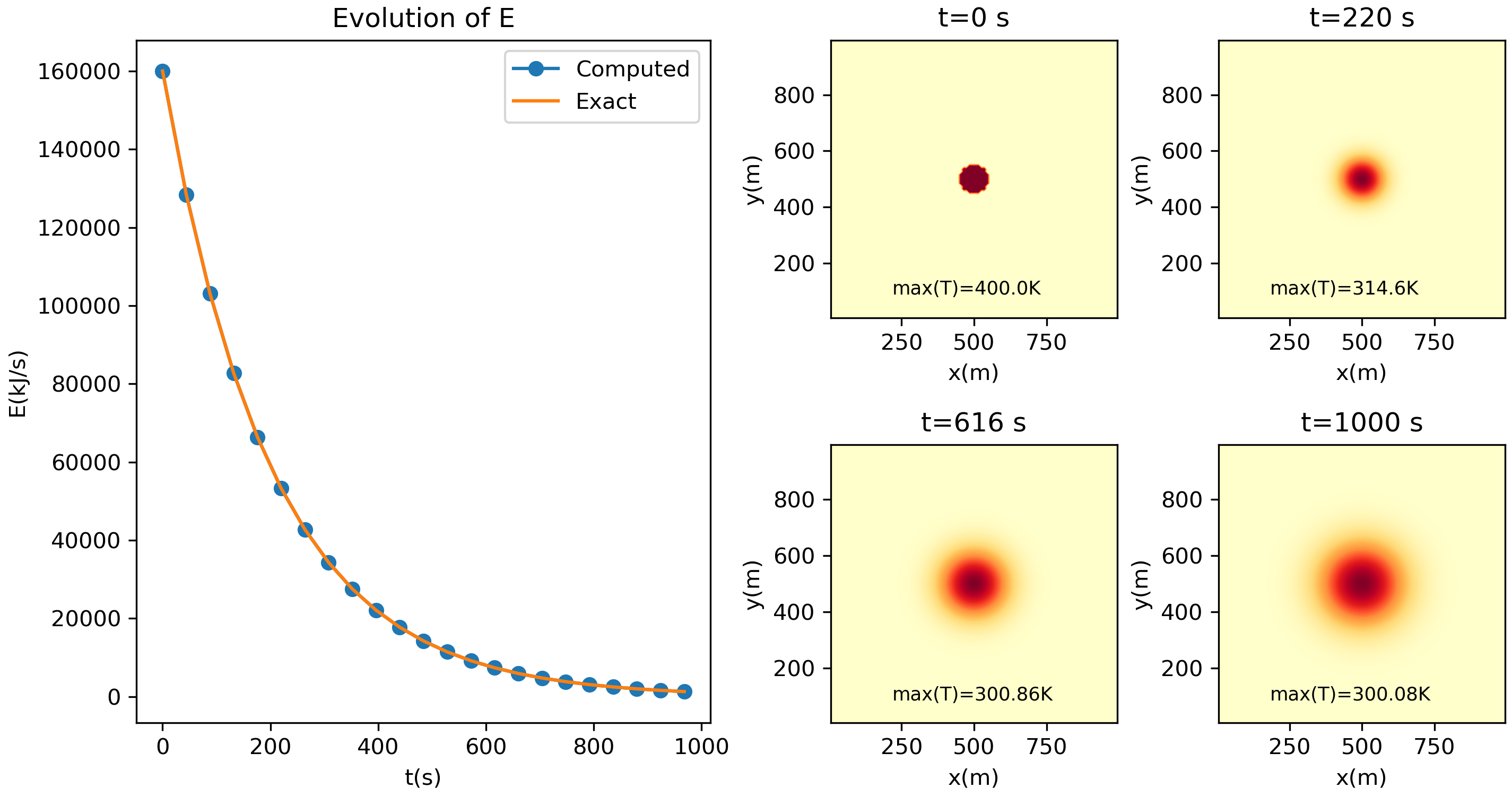}
	 \caption{Numerical example with the parameters in Tab.~\ref{table:general-parameters} for the combustion-free reaction--diffusion model in Eq.~\eqref{eq:model_pure_rd} with periodic boundary conditions. Left: Evolution of the total energy dissipation rate in the system calculated analytically (line) and numerically (circles). Right: Simulation snapshots of the temperature distribution in the domain at different time instants.  Red color indicates high and yellow color indicates low temperature.}
	 \label{fig:Energy_decay_sec_321}
 \end{figure}

\subsubsection{Combustion-free advection--diffusion--reaction sub-model}\label{sec:cfADR}

After regarding the energy dissipation of the advection-free model, we focus on the wind-driven propagation of wildfire.
In this sub-model, in addition to the energy dissipation and diffusion mechanisms discussed in Sec.~\ref{sec:energydissipation}, temperature is also advected. Combustion is still not regarded, so $\frac{\mathrm{d}}{\mathrm{d}t}Y=0$. 
We consider the partial differential equation for the temperature as
   \begin{equation} \label{eq:model_rd_wind}
       {\rho_0} {C} \left ( \frac{\partial T}{\partial t} + \mathbf{v} \cdot \nabla T \right )   =  \nabla \cdot ({ k} \nabla T ) - h (T-T_\infty),
 \end{equation}
 with periodic boundary conditions.
Assuming a spatially constant wind velocity $\mathbf{v}$, the equation reads
    \begin{equation}
        {\rho_0}   {C} \frac{\partial T}{\partial t}    =  \nabla \cdot \left ( { k} \nabla T -  {\rho_0}   {C} T \mathbf{v} \right) - h (T-T_\infty)  ,
 \end{equation}
 and the solution can be calculated according to Sec.~\ref{sec:energydissipation} by substituting the space variable with $\mathbf{x} - \mathbf{v}$.
The energy dissipation in Eq.~\eqref{eq:energy_dissipation} remains identical because the wind only shifts the whole solution in space and the periodic boundary conditions ensure that the total energy is not affected by the fluxes across the domain boundaries.

 \begin{figure}
	 \centering
		 \includegraphics[width=0.8\textwidth]{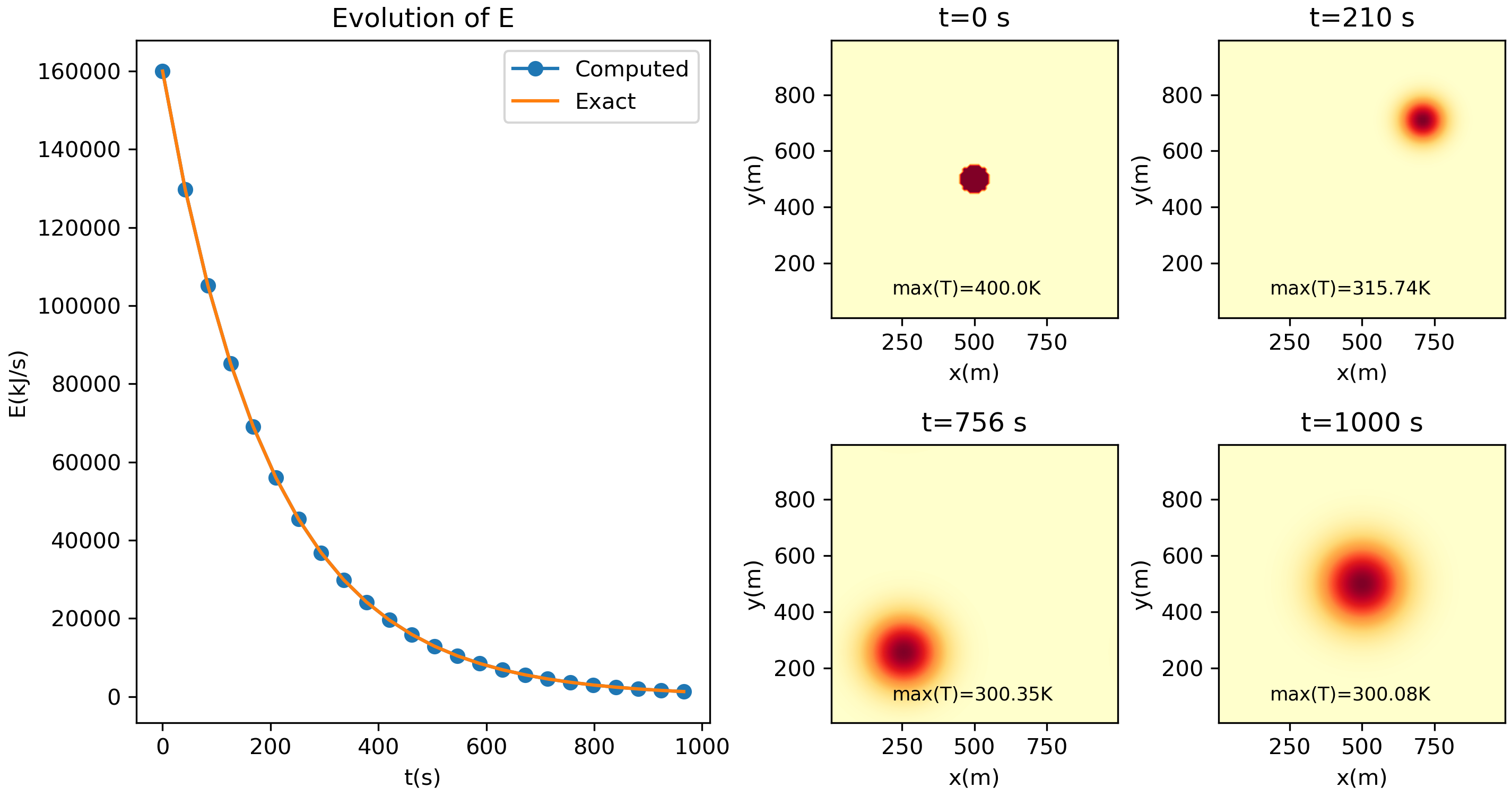}
	         \caption{Numerical example with the parameters in Tab.~\ref{table:general-parameters} for the combustion-free advection--diffusion--reaction model in Eq.~\eqref{eq:model_rd_wind} with wind velocity in direction $(1,1)^\mathrm{T}$ and periodic boundary conditions. Left: Evolution of the total energy dissipation rate in the system calculated analytically (line) and numerically (circles). Right: Simulation snapshots of the temperature distribution in the domain at different time instants.  Red colour indicates high and yellow colour indicates low temperature.}
	 \label{fig:Energy_decay_sec_322}
 \end{figure}

Fig.~\ref{fig:Energy_decay_sec_322}(left) shows the evolution of total energy inside the domain based on the combustion-free ADR sub-model in Eq.~\eqref{eq:model_rd_wind} with periodic boundary conditions and a wind velocity that is pointing towards the direction $(1,1)^\mathrm{T}$.  Comparison of the analytically obtained energy dissipation rate (line) and the numerical solution (circles) shows that the numerical solution preserves the energy as expected.  Fig.~\ref{fig:Energy_decay_sec_322}(right) shows simulation snapshots at different time steps.  The wind advects the high temperature from the centre of the domain at $t=0$ to the upper right-hand corner at $t= 210$ s towards the lower left-hand corner at $t=756$ s due to the periodic boundary conditions.  Along the way, the temperature is subject to diffusion such that the maximum temperature decreases as the simulation evolves. 
As in Fig.~\ref{fig:Energy_decay_sec_321}, the maximum temperature decreases until it nearly reaches the ambient temperature after $t=1000$ s.

The combustion-free ADR model in Eq.~\eqref{eq:model_rd_wind} does not have travelling wave solutions because the reaction function has only one stationary state for $T=T_\infty$.  The energy is not conserved which is shown by the non-zero energy dissipation rate in Fig.~\ref{fig:Energy_decay_sec_322}(left).

\subsection{Advection-free sub-models}\label{sec:advectionfree}

As a next sub-model, we investigate the model in Eq.~\eqref{eq:model1} without wind by setting  $\mathbf{v}=\mathbf{0}$.
This results in a reaction--diffusion equation coupled to an ordinary differential equation as
 \begin{align}  \label{eq:modelAdf}
 \begin{aligned}
        \rho_0  {C}  \left(\frac{\partial T}{\partial t}   \right)  &=  \nabla\cdot \left( k_t(T) \nabla T \right)     - h(T-T_{\infty}) +  \Psi(T)\rho_{0} H Y,   \\
 \frac{\partial Y}{\partial  t}&= -\Psi(T)Y. 
 \end{aligned}
 \end{align}
 We impose periodic boundary conditions and set the diffusion to a constant value $k_t(T)=k$.

The solution of Eq.~\eqref{eq:modelAdf} is either dominated by diffusion or has the form of travelling waves.  Both solution types are illustrated in Fig.~\ref{fig:diff_travel} that shows them for some time steps.  The diffusion dominated solution (green lines) occurs for parameter choices with comparable small values of $H$ and $h$ that diminish the effect of the reaction terms.  The temperature profile diffuses as time progresses.  The travelling wave solution (red lines) shows a distinct temperature profile with a steep gradient emerging and travelling through the domain as the simulation evolves.

\begin{figure}
	 \centering
    \includegraphics[width=0.45\textwidth]{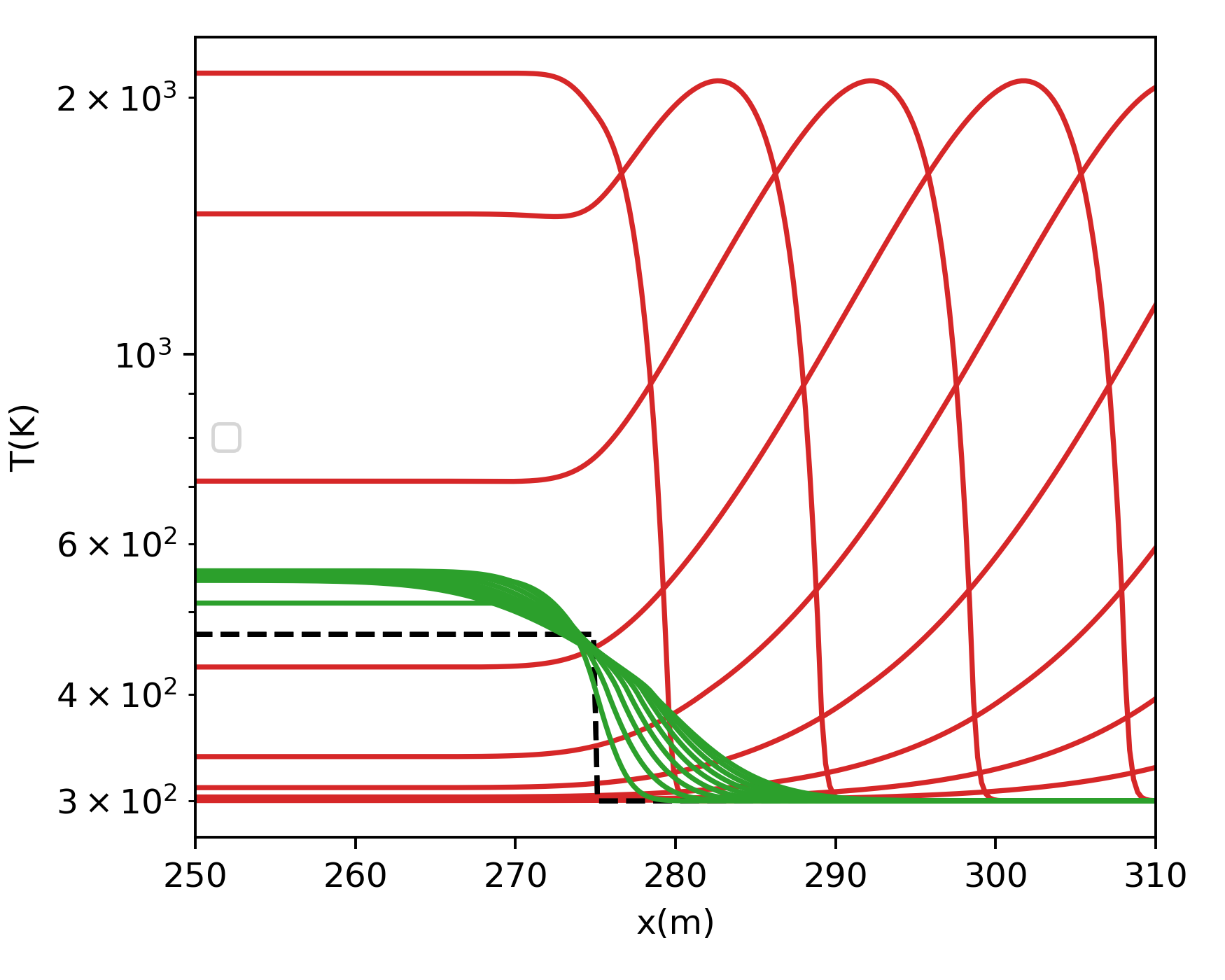}
	 \caption{ Comparison of numerically obtained solutions dominated by diffusion (green) and travelling waves (red).  Each line represents the solution at a different time instants. The initial condition is represented as black dashed line. The travelling wave solution has been computed using the default parameters in Tab. \ref{table:general-parameters}, whereas the diffusion-dominated solution has been computed setting $H=100$ kJ/kg and $h=0.01$ {kWm$^{-3}$ K$^{-1}$}.  The solutions are plotted every 50 s.
        }
	 \label{fig:diff_travel}
\end{figure}

\subsubsection{Travelling wave speed}\label{sec:travellingwaves}

Analytical results proving upper and lower bounds for the travelling wave speed in the full ADR wildfire model are still an open issue.  Here, we investigate the emergence and propagation of travelling waves through numerical simulations and give analytical estimates for the propagation speed.  We choose constant initial conditions on a bounded subdomain with the parameters from Tab.~\ref{table:general-parameters} and then study variations of the parameters $h$ and $k$ in Tab.~\ref{table:subcases-h} and Tab.~\ref{table:subcases-k}. In particular, we consider the computational domain [0,500] m and we set as initial condition

\begin{equation}\label{eq:ic}
   T(x,0) = 
		\left\{
		\begin{array}{lll}
      	470 \mbox{ K}  &\mbox{if} &  225< x <275,  \\
        300 \mbox{ K} & \multicolumn{2}{l}{\mbox{otherwise}}  \\  
		\end{array}
		\right. \text{and} \qquad Y(x,0)=1.0.
\end{equation}

\begin{table}
\centering
\scalebox{0.99}{
\begin{tabular}{c|ccccccc}
Sub-case & A-1 & A-2 & A-3 &A-4 &A-5 &A-6 & A-7  \\
\hline
$h$ (kW·m$^{-3}$·K$^{-1}$)  & 0.0 & 0.025 & 0.25 & 1.0 & 2.0 & 4.0 & 6.0   \\
\end{tabular}}
\caption{Cases computed for different parameters $h$.}
\label{table:subcases-h}
\end{table}

 \begin{table}
\centering
\scalebox{0.99}{
\begin{tabular}{c|cccccc}
Sub-case & B-1 & B-2 & B-3 &B-4 &B-5 &B-6   \\
\hline
$k$ (kW·m$^{-1}$·K$^{-1}$)  & 0.125 & 0.25 & 0.5 & 1.0 & 2.0 & 4.0   \\
\end{tabular}}
\caption{Cases computed for different constant diffusion coefficients $k_t=k$.}
\label{table:subcases-k}
\end{table}

Fig. \ref{fig:front_propagation_all} shows travelling wave solutions that result from the simulation of case A-6 in Tab.~\ref{table:subcases-h} and the parameters from Tab.~\ref{table:general-parameters}.  The initial condition in Eq.~\eqref{eq:ic} evolves to a fire front propagating to the right-hand side and a fire front propagating to the left-hand side.  The amplitude of the travelling wave solution is constant and equal for the spread in both directions.  The wave profile of the temperature is steep on the propagation front and less steep where the temperature decays.  The biomass burns down to a small amount of remaining biomass and the value is different in the support of the initial condition.   Our results are consistent with the findings reported in the literature, for example, \cite{mandel_wildland_2008}.

\begin{figure}
	 \centering
    \includegraphics[width=0.95\textwidth]{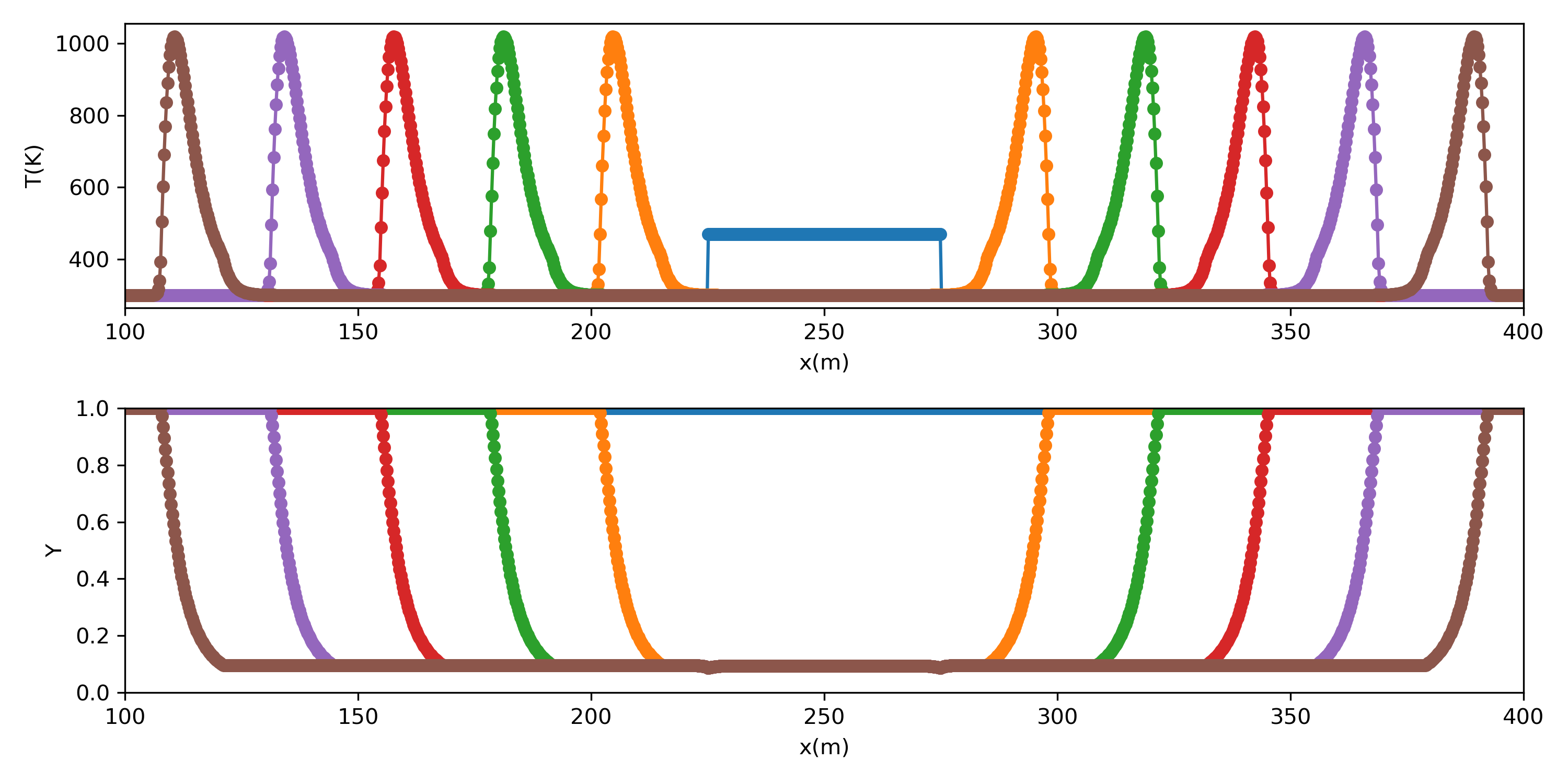}
	 \caption{Numerical example with travelling wave solution for case A-6 in Tab.~\ref{table:subcases-h} on a domain $x \in [0,500]$ with $N=2000$ cells.  All other model parameters are from Tab.~\ref{table:general-parameters}. The solution is plotted for every $150$ s.
        }
	 \label{fig:front_propagation_all}
 \end{figure}

The wave speed and the profile of the wave front depend on the model parameters. Fig.~\ref{fig:speed_front_h}(left) shows the influence of the parameter $h$ on the position of the wave front.  In all cases, the dependency of the wave front position over time is linear, after a short transient time.  The slope of these lines is the wave speed and decreases for increasing values of $h$, due to a higher heat release into the atmosphere.  As seen in Fig.~\ref{fig:speed_front_h}, the maximum temperature decreases and the front profile narrows for increasing values of $h$, as the energy dissipation (i.e. heat release into the atmosphere) enhances.

 \begin{figure}
	 \centering
   \begin{subfigure}[b]{0.45\textwidth}
		 \includegraphics[width=\textwidth]{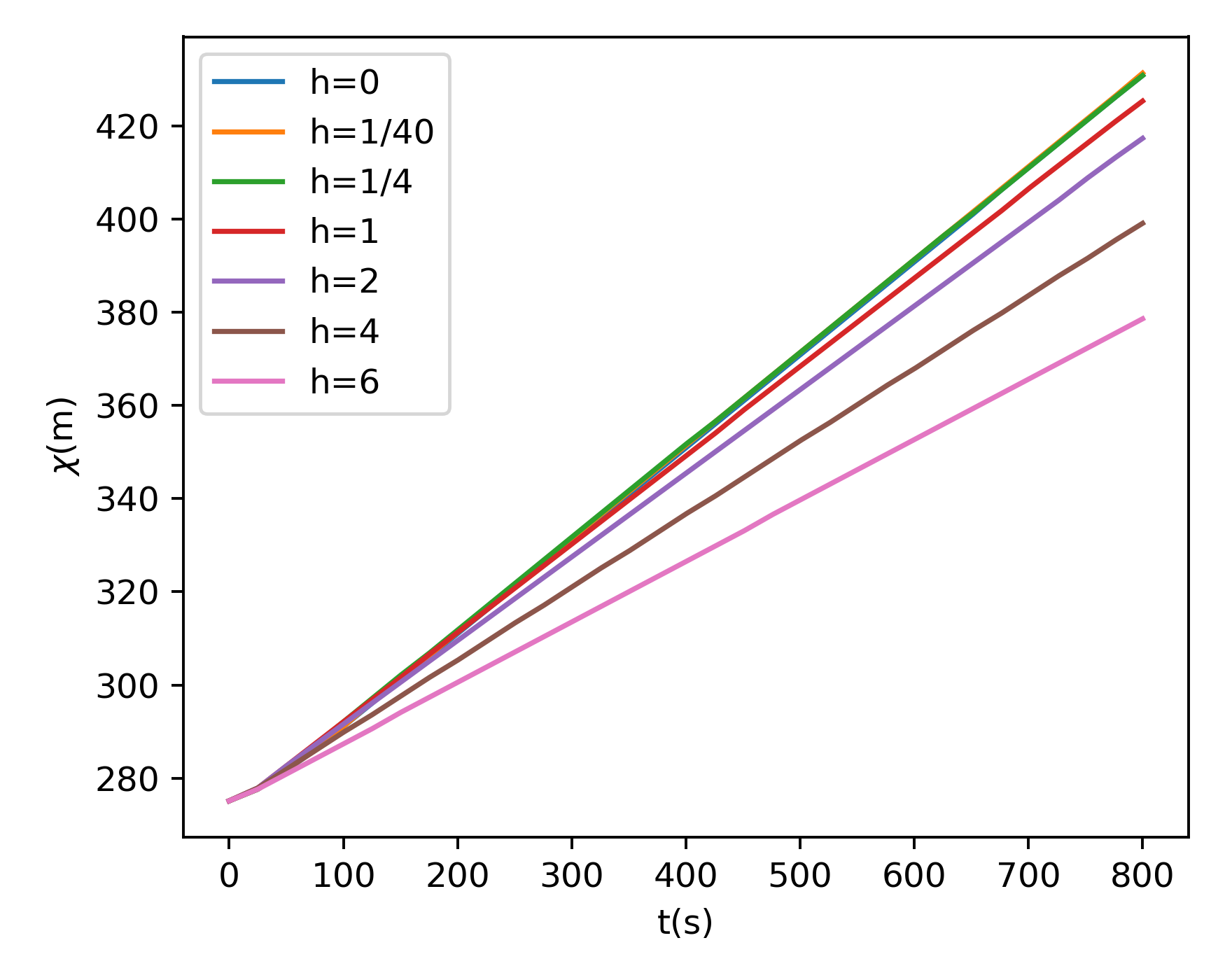}
   \end{subfigure}
      \begin{subfigure}[b]{0.45\textwidth}
		 \includegraphics[width=\textwidth]{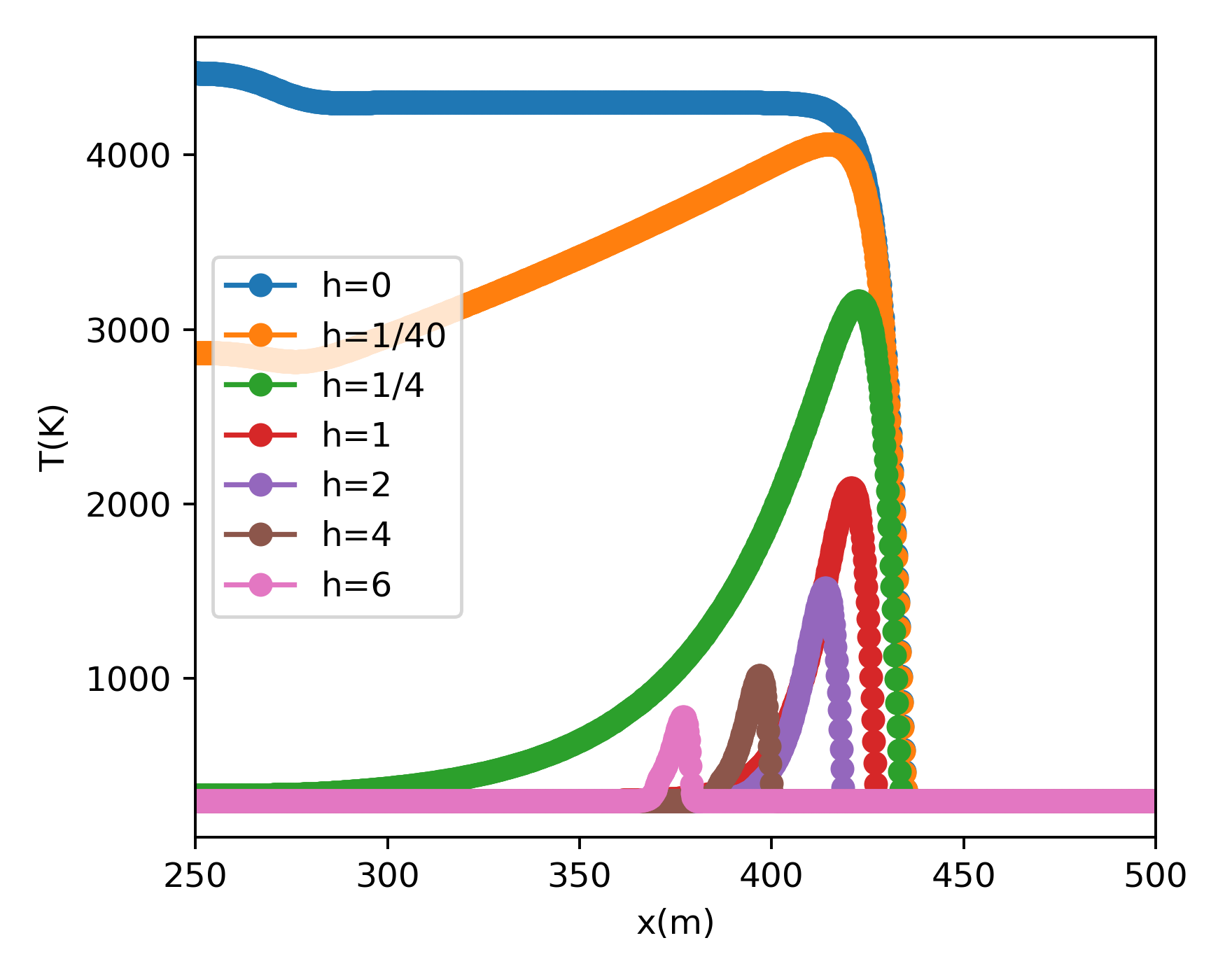}
   \end{subfigure}
	 \caption{Numerical example with travelling wave solution to illustrate the sensitivity of the parameter~$h$. The values of $h$ are chosen according to Tab.~\ref{table:subcases-h}. Left: Position of the wave front depending on the parameter~$h$  at $t=800$ s. Right: Profile of the wave front depending on the parameter~$h$. The parameter $k=2$ kW·m$^{-1}$·K$^{-1}$ is fixed.}
	 \label{fig:speed_front_h}
 \end{figure}

The observed larger maximum temperature and larger wave speed are connected.  For a larger maximum temperature, the spatial gradient of the temperature that drives the wave propagation is larger and thus, leads to a larger wave speed.  This observation is as well supported by numerical studies of the influence of the diffusion parameter $k$ in Fig.~\ref{fig:speed_front_k}, where the six different sub-cases in Tab.~\ref{table:subcases-k} are computed.  Here, larger diffusion parameters lead to a higher wave speed and a wider wave profile. Recall that the diffusion term models the heat transfer due to conduction and radiation.

\begin{figure}
	 \centering
   \begin{subfigure}[b]{0.45\textwidth}
		 \includegraphics[width=\textwidth]{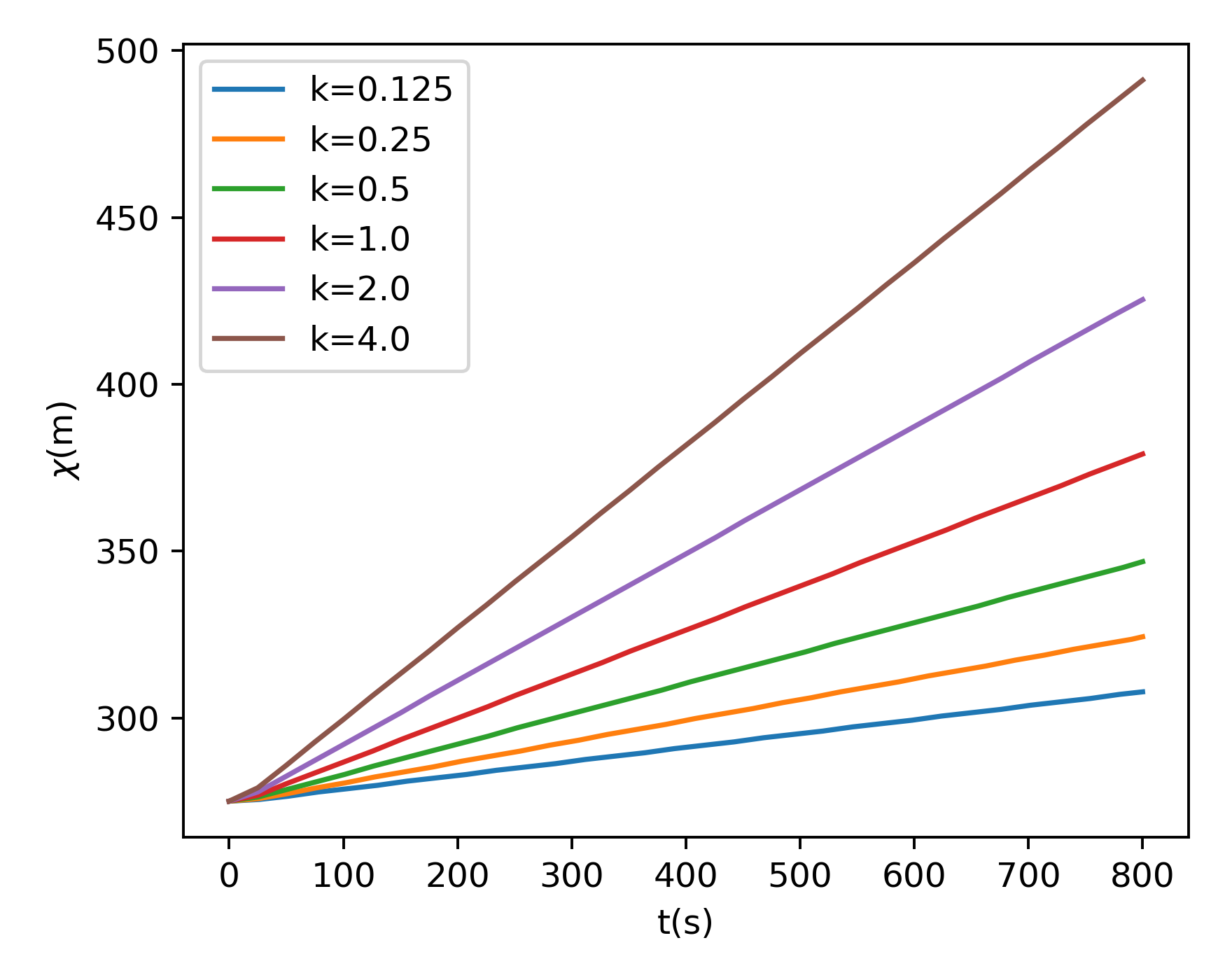}
   \end{subfigure}
      \begin{subfigure}[b]{0.45\textwidth}
		 \includegraphics[width=\textwidth]{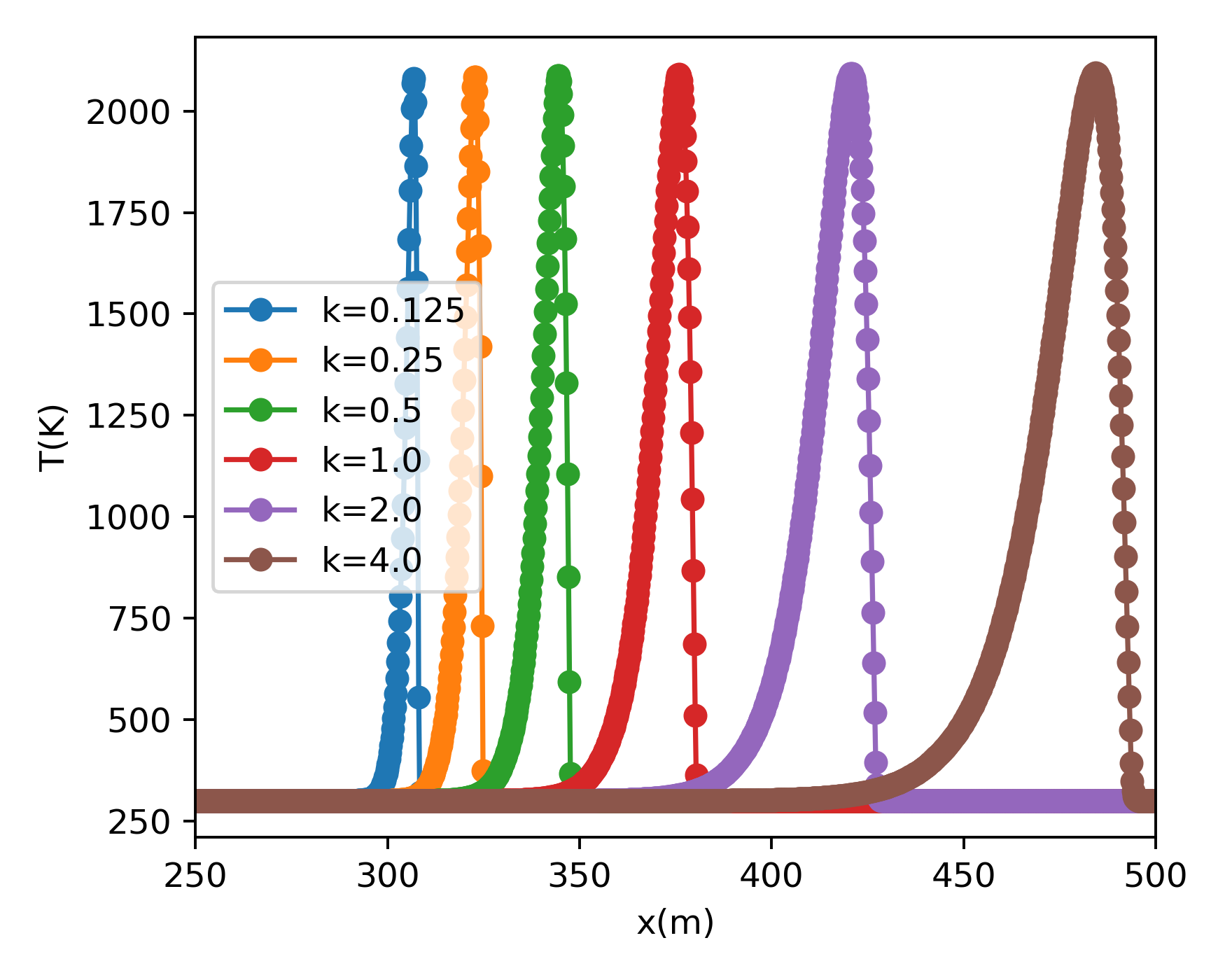}
   \end{subfigure}
	 \caption{Numerical example with travelling wave solution to illustrate the sensitivity of the parameter~$k$. The values of $k$ are chosen according to Tab.~\ref{table:subcases-k}. Left: Position of the wave front depending on the diffusion parameter $k$. Right: Profile of the wave front depending on the parameter $k$. The parameter $h=1$ kW·m$^{-3}$·K$^{-1}$ is fixed.  }
	 \label{fig:speed_front_k}
 \end{figure}

In the literature, results for travelling waves have been obtained for sub-models of the ADR wildfire model.  For example, the evolution of a travelling wave solution was shown in \cite{babak_effect_2009} for a model that neglects the effects of Newton's cooling law in Eq.~\eqref{eq:heatflux2}.  Further neglecting advection, the speed of a travelling wave is bounded by a sub-solution and a super-solution with $c_\star < c <c^\star$.  The values for $c_\star$ and $c^\star$ are given in transformed non-dimensional parameters in \cite{babak_effect_2009}.  A transformation to the dimensional variables and parameters of Eq.~\eqref{eq:model1} gives lower and upper bounds as
\begin{equation}\label{eq:bounds_wavespeed}
    c_\star = \sqrt{k} \sqrt{ \frac{H}{\rho_0  {C}} \frac{1-\frac{ {C}}{H}(T_\mathrm{ac}-T_\infty)}{T_\mathrm{ac} - T_\infty} \frac{A}{\mathrm{e}} }
    < c <
    c^\star = \sqrt{k} \sqrt{\frac{H}{ { \rho_0} {C}}\frac{1-\frac{C}{H}(T_\mathrm{ac}-T_\infty)}{T_\mathrm{ac} - T_\infty}  A \, \mathrm{exp}\left (- \frac{T_\mathrm{ac}}{T_\infty + H/ {C}} \right) }
\end{equation}
for the travelling wave speed $c$.  The extension of the proof to the full model including the heat exchange with the environment is still an open problem.  In the following, we use the values of $c_\star$ and $c^\star$ for comparison with numerically obtained wave speeds.

We apply a different approach by  comparing the system again to a reaction--diffusion equation.
In \cite{hadeler_travelling_1975}, for reaction--diffusion equations of the form
\begin{equation}\label{eq:general_RD}
    \frac{\partial u}{\partial t} = d \frac{\partial^2 u }{\partial x^2} + f(u),
\end{equation}
with a monostable reaction function $f$, the existence of a travelling wave front is proven.  The influence of advection on the front speed in case of a monostable reaction--diffusion equation is discussed in \cite{al-kiffai_lack_2016}.

The extension of the results to an advection--diffusion--reaction model coupled with an ordinary differential equation still needs to be addressed and requires similar results for super- and sub-solutions like the proof of the existence of a travelling wave solution.

Therefore, we regard the reaction--diffusion equation
 \begin{align}  \label{eq:reacdiff_onlyT}
 \begin{aligned}
        \rho_0  {C}  \left(\frac{\partial T}{\partial t}   \right)  &=  k \nabla\cdot  \nabla T      - h(T-T_{\infty}) +  \Psi(T)\rho_{0} H Y,   
 \end{aligned}
 \end{align}
for a fixed $Y$. Then, the conditions of \cite{hadeler_travelling_1975} for a travelling wave solution are fulfilled.  The reaction function has two roots, where one is at $T=T_\infty$ and the other one is the solution of
\begin{equation*}
    \frac{h}{\rho_0  { C}} (T-T_\infty) = \frac{H}{\rho_0 { C}} A Y  \mathrm{exp} \left ({- \frac{T_\mathrm{ac}}{T}} \right) 
\end{equation*}
for fixed $Y$.  Varying $Y$ will vary the root, shifting the value for smaller $Y$ to smaller $T$.

Further, the reaction function $f$ (compare Eq.~\eqref{eq:general_RD}) of Eq.~\eqref{eq:reacdiff_onlyT} is positive between the two roots and the derivative of $f$ is positive for the first and negative for the second root.  Then, a travelling wave solution of the single reaction--diffusion equation in Eq.~\eqref{eq:reacdiff_onlyT} exists and the linearised wave speed of the travelling wave is 
\begin{equation}\label{eq:lin_wavespeed}
    c_\mathrm{lin} = 2 { \sqrt{\frac{\mathrm{d}}{\mathrm{d}T}f(T=T_\infty)}}=2 \sqrt{\frac{k}{\rho_0  { C}} \left ( \frac{H}{ { C}} A Y \frac{T_\mathrm{ac}}{T_\infty^2}  \mathrm{exp} \left (- \frac{T_\mathrm{ac}}{T_\infty} \right )  - \frac{h}{\rho_0  {C}}\right ) },
\end{equation}
where $f$ is the reaction-function according to Eq.~\eqref{eq:general_RD}.
This linearised wave speed may give a lower limit, see \cite{hadeler_travelling_1975}. Here, it is only derived for a sub-model and we investigate its approximation quality numerically.

 \begin{figure}
	 \centering
   \begin{subfigure}[b]{0.45\textwidth}
		 \includegraphics[width=\textwidth]{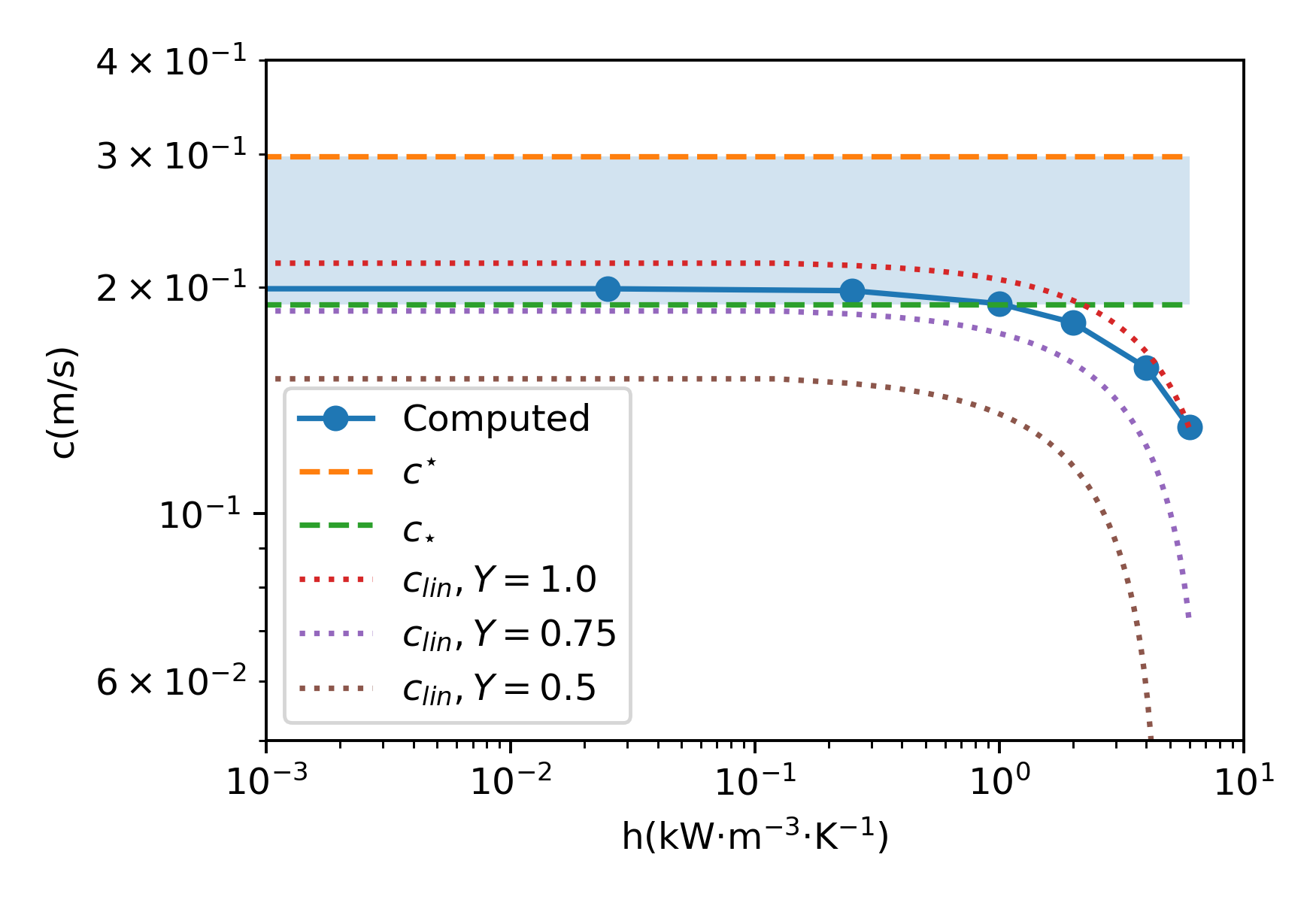}
   \end{subfigure}
      \begin{subfigure}[b]{0.45\textwidth}
		 \includegraphics[width=\textwidth]{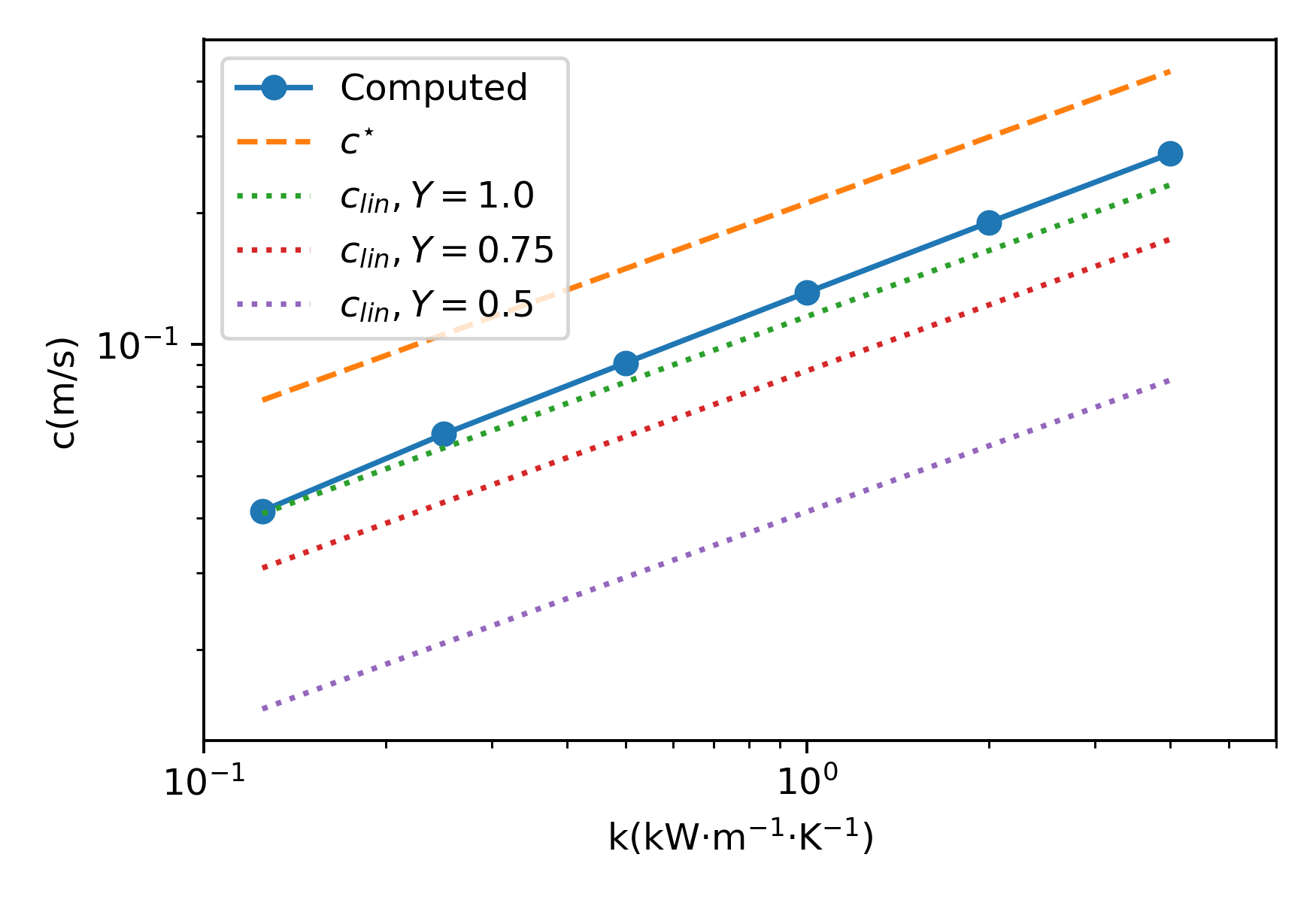}
   \end{subfigure}
	 \caption{Left: Wave speed for different values of $h$ and default $k$. Right: wave speed for different values of $k$ and default $h$. The bounds from Eq.~\eqref{eq:bounds_wavespeed} are depicted in blue, and the linearised wave speed of Eq.~\eqref{eq:lin_wavespeed} is displayed for various fixed values $Y$. }
	 \label{fig:speeds_bounds}
 \end{figure}

Fig.~\ref{fig:speeds_bounds} compares the bounds in Eq.~\eqref{eq:bounds_wavespeed} for the model with $h=0$ and numerical simulations. Additionally, the linearised minimal wave speed of Eq.~\eqref{eq:lin_wavespeed} for the model in Eq.~\eqref{eq:reacdiff_onlyT} is shown. 
The upper bound for $h=0$ in Eq.~\eqref{eq:bounds_wavespeed} is an upper bound for any $h$.  The dependency of the linearised propagation speed in Eq.~\eqref{eq:lin_wavespeed} on $h$ is non-linear with a decrease for increasing $h$.  This behaviour is expected from the results plotted Fig.~\ref{fig:speed_front_h}, where we found that the front speed decreases with increasing $h$.  From an application point of view, this insight is useful because even if $h$ cannot be determined precisely, a lower estimate for $h$ gives conservative estimates for the speed of the fire front.
However, the lower bound of Eq.~\eqref{eq:bounds_wavespeed} is not a lower bound of the model with $h>0$.
For the case $h=0$, we find a dependency proportional to $\sqrt{k}$ of the upper and lower bound of the wave speed on the diffusion parameter $k$.  The upper bound for the wave speed in Eq.~\eqref{eq:bounds_wavespeed} is again an upper bound for the numeric calculated wave speed.  The linearised minimal wave speed from Eq.~\eqref{eq:lin_wavespeed} gives a rough estimate for the dependency on $k$ but not exact bounds.

In Fig.~\ref{fig:ODEvsPDE_phaseY}, we compare the travelling wave speed for different initial biomass values. The smaller the initial biomass is, the slower propagates the combustion wave. This is qualitatively as well the interpretation of the linearised wave speed in Eq.~\eqref{eq:lin_wavespeed}. The dashed line in Fig.~\ref{fig:ODEvsPDE_phaseY}, bottom right, gives the dependency of the linearised travelling wave speed depending on $Y_0$. The travelling wave speed was gained from fixing the biomass $Y$ and regarding only the reaction--diffusion equation in Eq.~\eqref{eq:reacdiff_onlyT}. This rough approximation of the coupled dynamics shows  good results in the computations.

 \begin{figure}
	 \centering
    \includegraphics[width=0.9\textwidth]{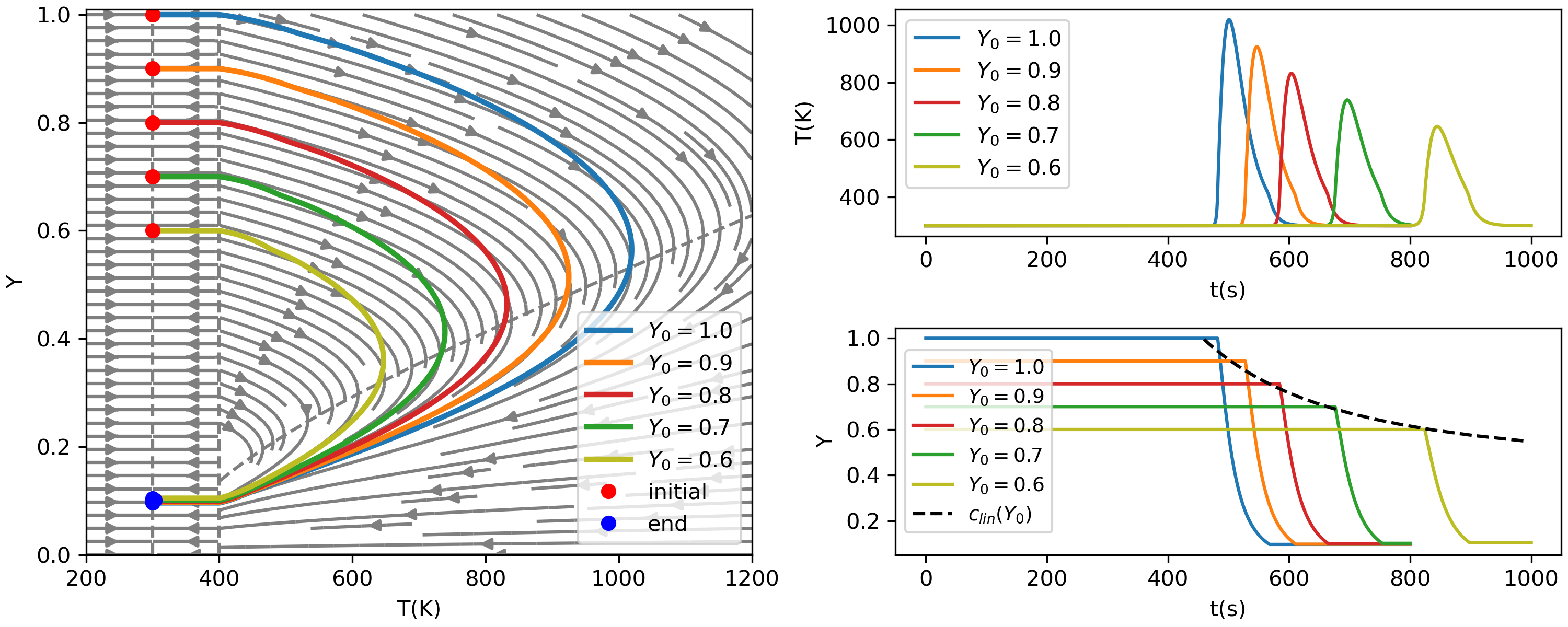}
	 \caption{ Left: Phase space portrait of the solution for case A-6 at $x=350$ m with $k=2$ kW·m$^{-1}$ K$^{-1}$, $h=4$ kW·m$^{-3}$·K$^{-1}$. Right: Temperature $T$ and biomass $Y$ over time. Different initial conditions for the biomass are used, $Y_0=\left\{0.6,0.7,0.8,0.9,1.0 \right\}$.  }
	 \label{fig:ODEvsPDE_phaseY}
 \end{figure}

The simulations in Sec.~\ref{sec:travellingwaves} use a constant diffusion coefficient of $k_t(T)=k \in \mathbb{R}$.  In \cite{Burger:2020a}, the influence of different diffusion coefficients $k_t(T)$ was studied numerically.  A qualitative comparison of constant diffusion and non-constant diffusion from \cite{Burger:2020a} shows that the burnt area is smaller if a constant diffusion $k_t(T)=k$ is used.

\subsubsection{Terminal biomass}\label{sec:terminalbiomass}

The analysis of the lumped sub-model in Eq. \eqref{eq:modelODE} in Sec. \ref{sec:ODE} provides an upper bound for the remaining terminal biomass after the combustion process by the tipping line.  A limitation of this analysis is that spatially explicit diffusion processes can not be modelled through the lumped model.  Thus, the spatially explicit diffusion mechanism in Eq.~\eqref{eq:modelAdf} may lead to a different tipping line and consequently a shift in the upper bound.
In spatially explicit sub-models, at any point $p$ in the domain, the diffusion process leads to a rise in temperature before the combustion process reaches it.  During the combustion process, heat is transferred to cooler regions by the diffusion mechanism.  The increase of temperature at $p$ is amplified by diffusion from warmer regions, leading to a shift of the tipping line in the phase plot to the right-hand side.  The diffusion-induced decrease of temperature leads to a shift of the tipping line to the left-hand side in the phase plot.

This is illustrated in Fig.~\ref{fig:ODEvsPDE_phase} that compares trajectories in the phase space of the lumped sub-model in Eq. \eqref{eq:modelODE} and the advection-free sub-model in Eq.~\eqref{eq:modelAdf} for case A-6 in Tab.~\ref{table:subcases-h} with the initial condition in Eq.~\eqref{eq:ic}.  For the latter, trajectories for two different points ($p = x$) are plotted.  One of these points is in the support of the initial condition, where the diffusion effect is small; the second point $x$ further away from the support of the initial condition, where the travelling wave behaviour dominates.
Due to the initial temperature distribution, there is a difference in the initial value in the phase space portrait in Fig.~\ref{fig:ODEvsPDE_phase} for the two points $p$ in space.  The trajectory for $x=250$~m starts with an initial temperature above the activation temperature, the trajectory for $x=350$~m has an initial temperature smaller than the activation temperature.  The trajectory for a point where diffusion is small ($x=250$~m) follows the trajectories of the ordinary differential equations.  In contrast, the trajectory for a point where the travelling wave front passes ($x = 350$~m) has a decreased temperature compared to the trajectories of the ordinary differential equations.  In this particular case, the remaining biomass in both sub-models is similar. However, this is not true in general.  In some cases, we observed a decreased biomass for the sub-model in Eq.~\eqref{eq:modelAdf} compared to the lumped model in Eq.~\eqref{eq:modelODE}.

\begin{figure}
	 \centering
    \includegraphics[width=0.9\textwidth]{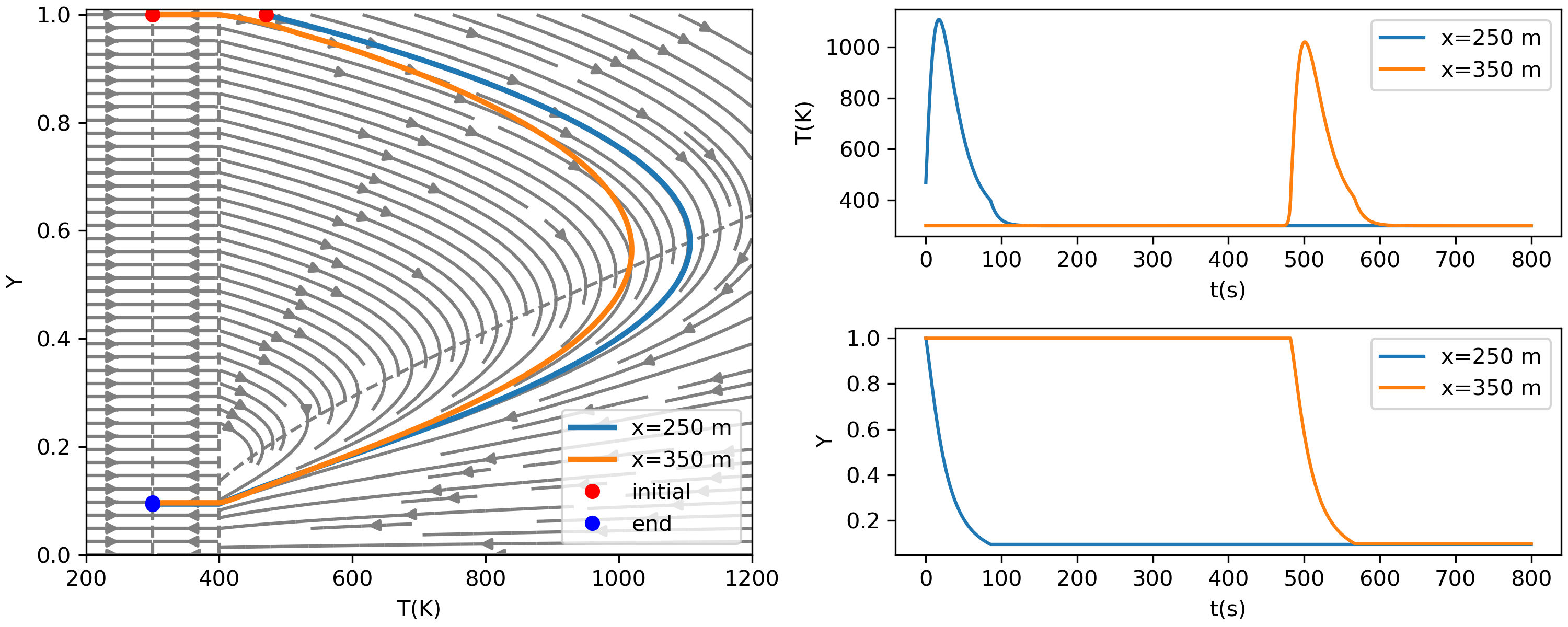}
	 \caption{ Left: Phase space portrait of the solution for case A-6 at $x=250$ and $x=350$ m with $k=2$ kW·m$^{-1}$ K$^{-1}$, $h=4$ kW·m$^{-3}$·K$^{-1}$. Right: Temperature $T$ and biomass $Y$ over time. The initial condition for the biomass is $Y_0=1.0$. }
	 \label{fig:ODEvsPDE_phase}
 \end{figure}

The influence of the initial biomass on the terminal biomass is qualitatively similar to the lumped model in Fig.~\ref{fig:Terminal_map}. Fig.~\ref{fig:ODEvsPDE_phaseY} compares the trajectories for different initial values $Y_0$. We discussed in Sec.~\ref{sec:travellingwaves} the slowing down of the travelling wave speed by smaller initial biomass. In contrast, the initial biomass has a low impact on the terminal biomass.

\subsection{Summary of the analysis of the hierarchical sub-models}

The spatially lumped sub-model in Sec.~\ref{sec:ODE} has a tipping line for the shift between heating and cooling which gives an upper limit for the remaining biomass after the combustion process.  Analysis of the system reveals a continuous line of stationary states due to the switching function in the reaction function modelling the activation and deactivation of the combustion process.  The terminal biomass of the spatially lumped sub-model reacts less sensitive to changes in the initial biomass but more sensitive to changes of the activation temperature $T_\mathrm{ac}$ and changes of the bulk density $\rho_0$ or the heating value $H$.  Changes in the activation temperature can be interpreted as the influence of management strategies such as pre-emptive watering while changes in the vegetation result in a different bulk density and heating value, in the context of the ADR model herein considered.

We define an energy dissipation functional for the combustion-free sub-model in Sec.~\ref{sec:combustionfree} which allows to verify the numerical model for the diffusion and ambient cooling.  The energy dissipation does not change if advection caused by wind is introduced.  Therefore, the verification of the numerical model is valid for the combustion-free sub-model as well with advection.  Meanwhile, this sub-model shows the diffusive behaviour and the exchange with the environment, both leading to a decaying temperature.

The advection-free model in Sec.~\ref{sec:advectionfree} including combustion has travelling wave solutions with steep wave profiles. Such travelling wave behaviour is not caused by wind and advection, but it is a result of the interplay between reaction and diffusion. After a short transient phase, a wave profile forms and propagates with a constant speed in space. The wave speed depends on the reaction parameters and the diffusion constant, compare Fig.~\ref{fig:speed_front_h} and~\ref{fig:speed_front_k}.  As a reference value for the wave speed, we find upper and lower bounds for $h=0$.  For a constant biomass, the linearised wave speed gives an approximation for the dependency on the wave speed as well.   When the initial biomass is reduced, there is a noticeable decrease in wave speed, compare Fig.~\ref{fig:ODEvsPDE_phaseY}. However, this reduction in initial biomass does not have a significant impact on the terminal biomass, which remains relatively unchanged.

\section{Insights from the full model}\label{sec:full}

\subsection{One-dimensional wildfire simulation}

The full ADR wildfire model allows to study the interplay of combustion, temperature exchange with the environment, diffusion, and advection.
Here, we run several one-dimensional wildfire simulations using the full ADR wildfire model in Eq.~\eqref{eq:model1} with different wind speed $v$ as summarised in Tab.~\ref{table:subcases-v}. In general, the advection speed of the wildfire should always be lower than the wind speed. In fact, a correction factor must be used to obtain the advection speed \cite{Grasso:2020}, but herein, for the sake of simplicity, we will  set both of these speeds equal.

\begin{table}
\centering
\scalebox{0.99}{
\begin{tabular}{c|ccccccc}
Sub-case & C-1 & C-2 & C-3 &C-4 &C-5 &C-6  & C-7 \\
\hline
$v$ (m/s)  & 0.00 & 0.02 & 0.04 & 0.06 & 0.08 & 0.09 & 0.1   \\
\end{tabular}}
\caption{Cases computed for different wind speed $v$.}
\label{table:subcases-v}
\end{table}

\begin{figure}
	 \centering
    \includegraphics[width=0.95\textwidth]{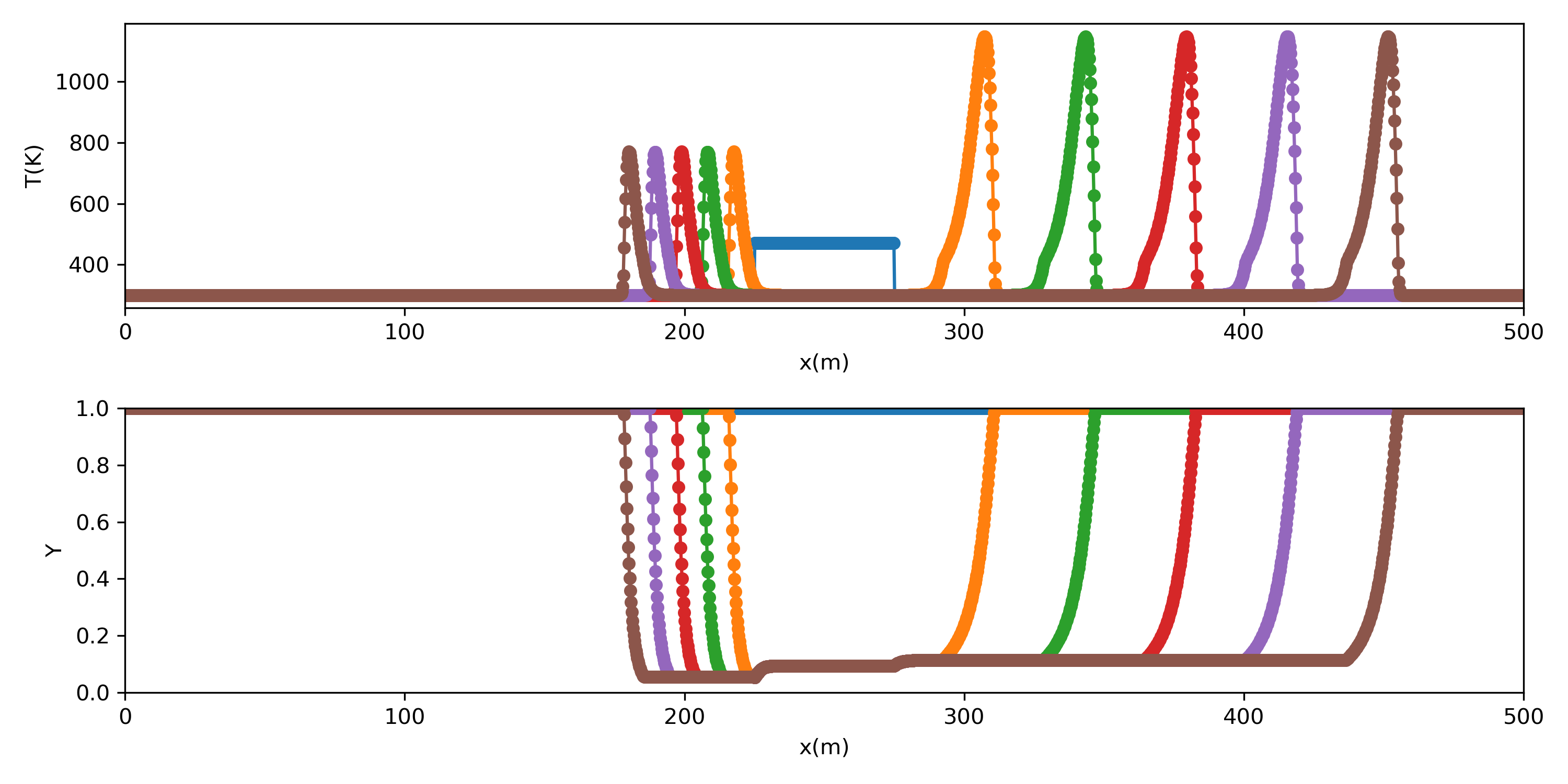}
	 \caption{Solutions of the model in Eq.~\eqref{eq:model1} each 150 s for constant diffusion and case C-5 of Tab.~\ref{table:subcases-v} with $v=0.08$~m/s. The domain is [0,500] m and the discretisation uses $N=2000$ cells. The parameters have the default values of Tab.~\ref{table:general-parameters} with $k=2$ { kW·m$^{-1}$ K$^{-1}$} and $h=4${ kW·m$^{-3}$·K$^{-1}$}.}
	 \label{fig:front_propagation_all_w}
 \end{figure}

Adding wind into the system dynamics changes the amplitude and propagation speed of the waves propagating in the same and in the opposite direction of the wind.  Fig.~\ref{fig:front_propagation_all_w} shows the wave profile and the spread of the waves for different times $t$ and a constant wind speed $v=0.08$ m/s.
Compared to the simulations without wind in Fig.~\ref{fig:front_propagation_all}, the temperature wave propagating to the left and to the right have different amplitudes, profiles and speed.  The biomass consumption differs for the two directions.
While the temperature for the wave propagating to the right is much higher, the remaining biomass is higher in this case.  This suggests that temperature is not the primary control of the remaining biomass.  This observation is consistent with our study of the lumped sub-model in Fig.~\ref{fig:Terminal_map}.
A higher temperature leads to a shift to the right in the phase space of Fig.~\ref{fig:Terminal_map}.
The lumped sub-model does not react sensitive to this change with respect to the remaining biomass.
Hence, the full model highlights an additional dependency of the remaining biomass.
As the wave propagating to the left has a much lower wave speed, the combustion time at a certain position $x$ is much longer than for the wave propagating to the right.
This leads to a longer burning process at this position and therefore to a lower remaining biomass.

Fig.~\ref{fig:ODEvsPDE_phase_w}(left) illustrates the difference in the trajectories through the phase space of the lumped sub-model and the full ADR wildfire model.  For the latter, trajectories for three distinct points $p = x$ are plotted.
The blue line shows a trajectory of a point inside the support of the initial temperature pulse.  It follows closely the trajectory of the lumped sub-model.
The orange line shows the trajectory of a point at the right-hand side of the initial temperature subject to a travelling wave.
Here, the phase plot shows an increased temperature for the full ADR wildfire model trajectory, compared to the lumped sub-model, which is caused by the transport of temperature in the wind direction.  The opposite effect is seen for the green curve, where the wind direction and the wave propagation are opposite.  As a consequence, the temperature is lower for medium temperatures but the combustion process at one point in space takes longer.  This leads to a lower temperature after the combustion process but to a lower terminal biomass at the equilibrium state.  Fig.~\ref{fig:ODEvsPDE_phase_w}(right) shows the profiles of the waves propagating to the left and to the right.  The profile is steeper on the activation front and the amplitude of the temperature is higher for the wave where wind and propagation have the same direction.

\begin{figure}
	 \centering
    \includegraphics[width=0.9\textwidth]{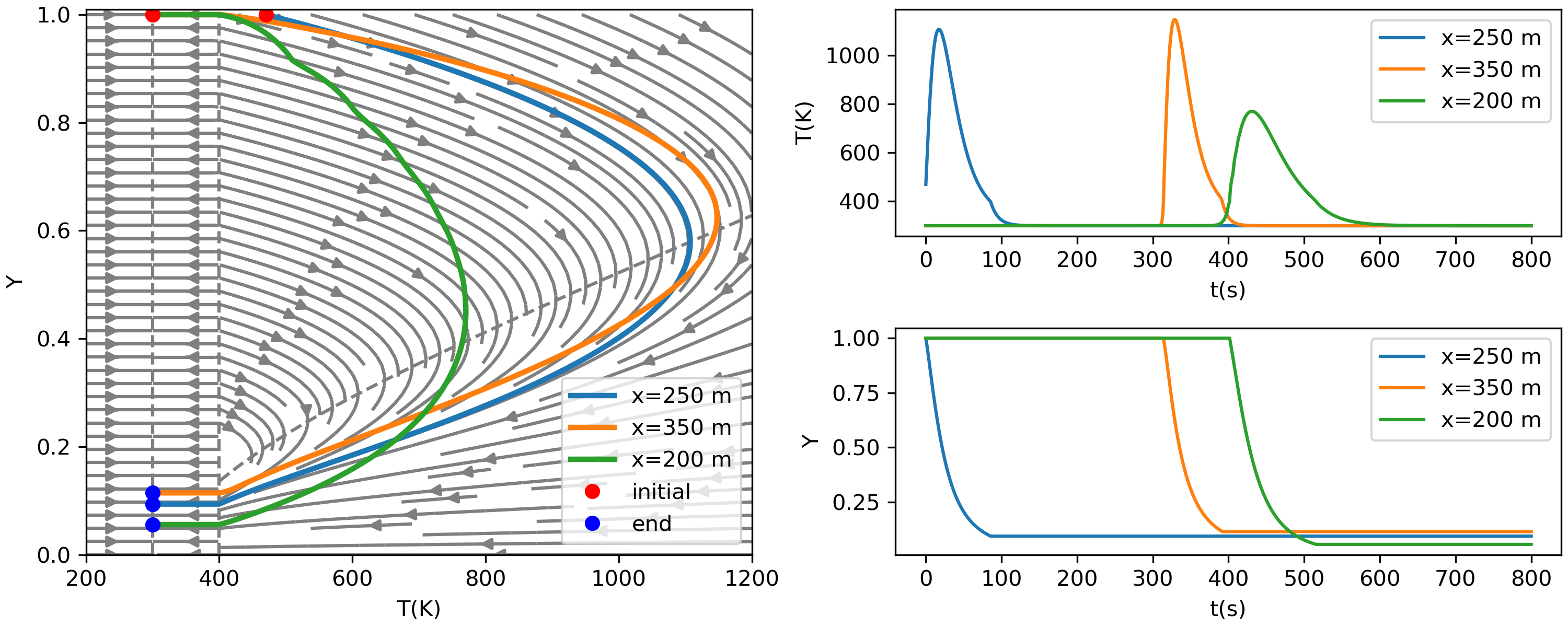}
	 \caption{Phase space trajectories for three points for case C-5 in Tab.~\ref{table:subcases-v}. At $x=250$ m, the initial condition is non-zero and the behaviour is dominated by the ordinary differential equation. The gray lines give the trajectories of the ordinary differential equation, compare Fig.~\ref{fig:ODE_map}.  At $x=200$ m, the left travelling wave passes by and at $x=350$ m the right travelling wave passes by.}
	 \label{fig:ODEvsPDE_phase_w}
 \end{figure}

Fig.~\ref{fig:profiles_w} shows the wave profiles of the temperature for different wind speed.  The amplitude and the speed of the wave propagating in the opposite direction to the wind decrease with higher wind speed.  For a certain wind speed, the opposite wave even stops propagating and only one wave is travelling in the wind direction.  This qualitative change of the system's behaviour is as well depicted in Fig.~\ref{fig:velocities_w}.

 \begin{figure}
	 \centering
   \begin{subfigure}[b]{0.45\textwidth}
		 \includegraphics[width=\textwidth]{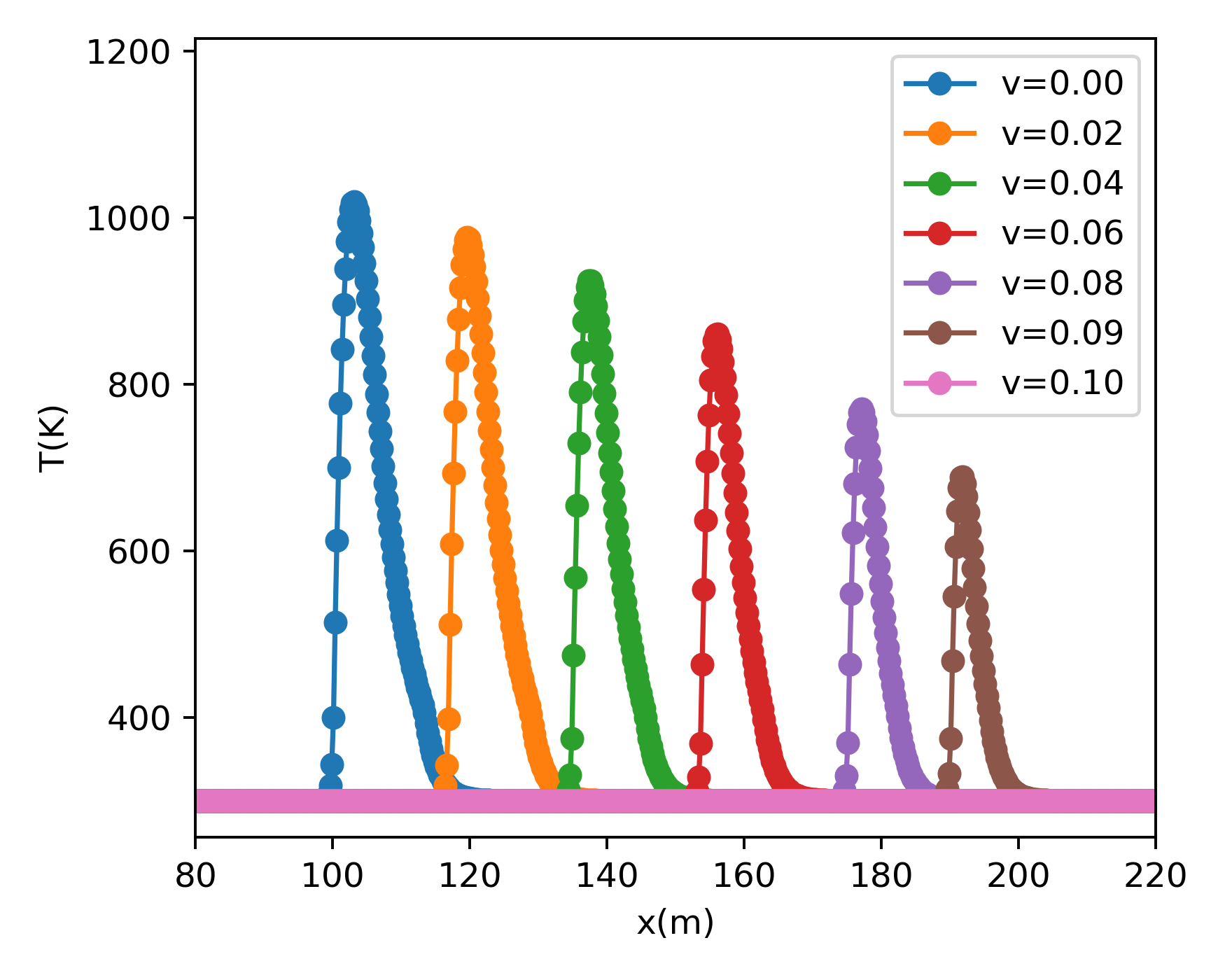}
   \end{subfigure}
      \begin{subfigure}[b]{0.45\textwidth}
		 \includegraphics[width=\textwidth]{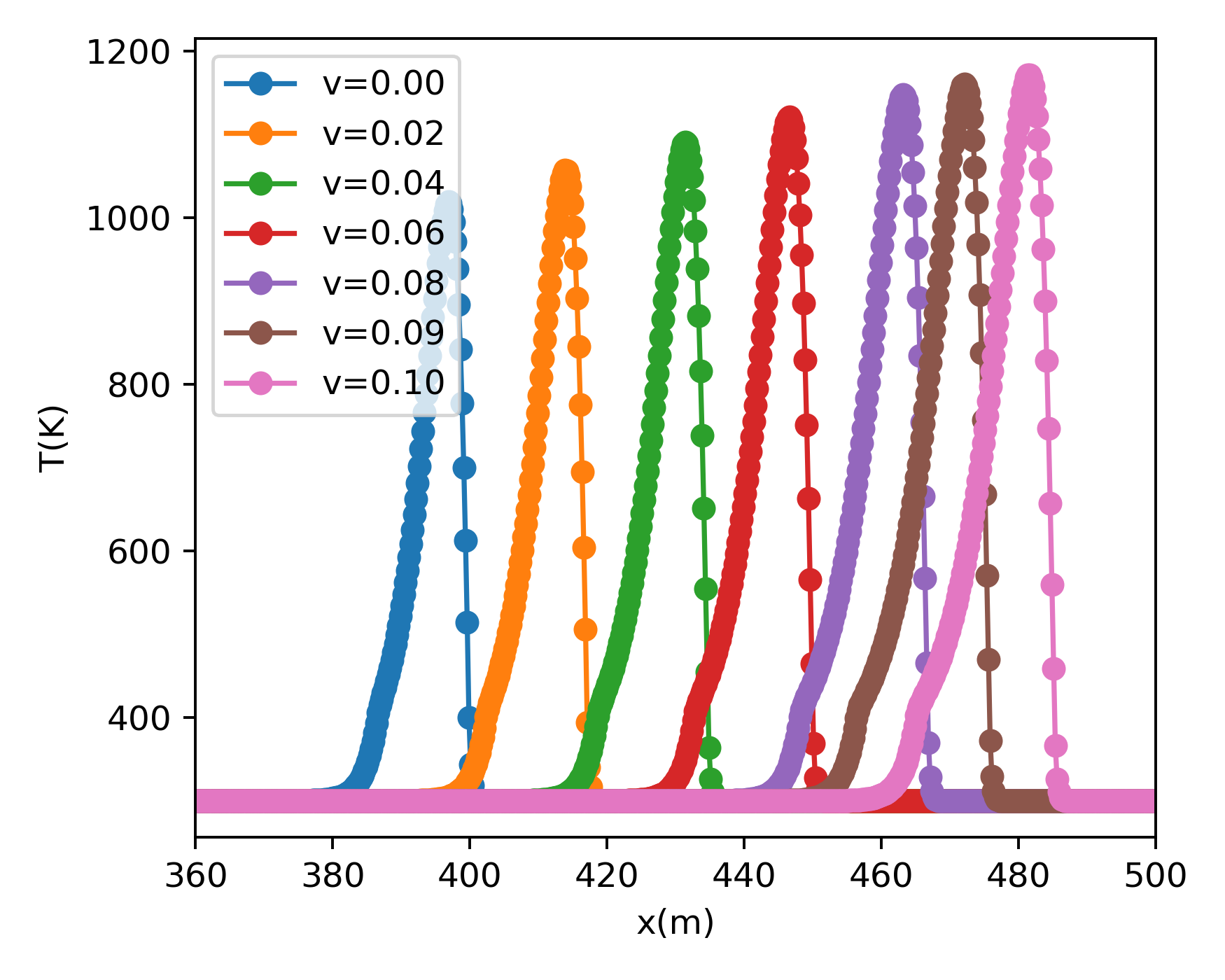}
   \end{subfigure}
	 \caption{ Profiles of the travelling temperature waves to the left and to the right for fixed parameters and different wind speed. For $v=0.1$ m/s, there is no wave propagating to the left.
  }
	 \label{fig:profiles_w}
 \end{figure}

  \begin{figure}
	 \centering
   \begin{subfigure}[b]{0.45\textwidth}
		 \includegraphics[width=\textwidth]{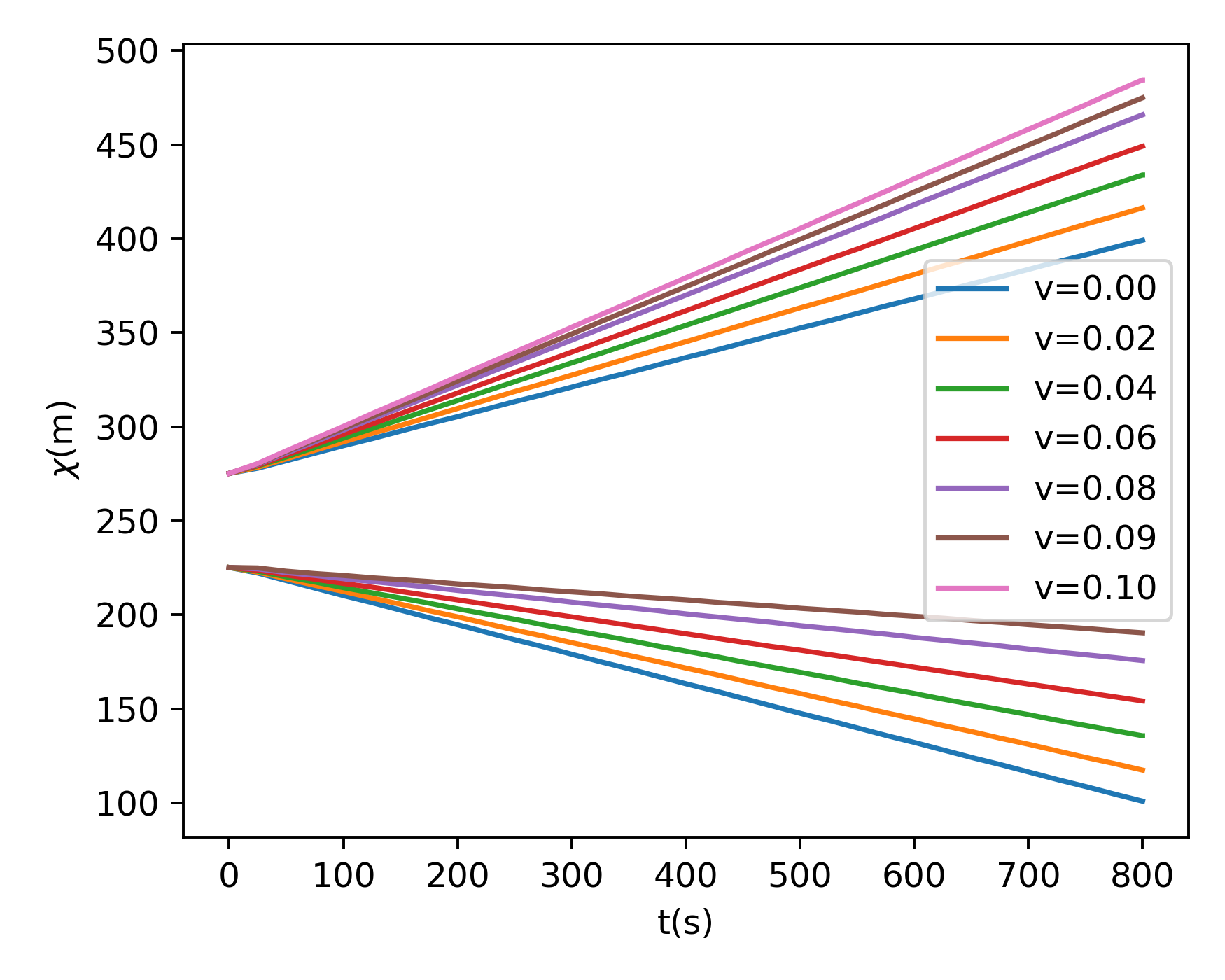}
   \end{subfigure}
      \begin{subfigure}[b]{0.45\textwidth}
		 \includegraphics[width=\textwidth]{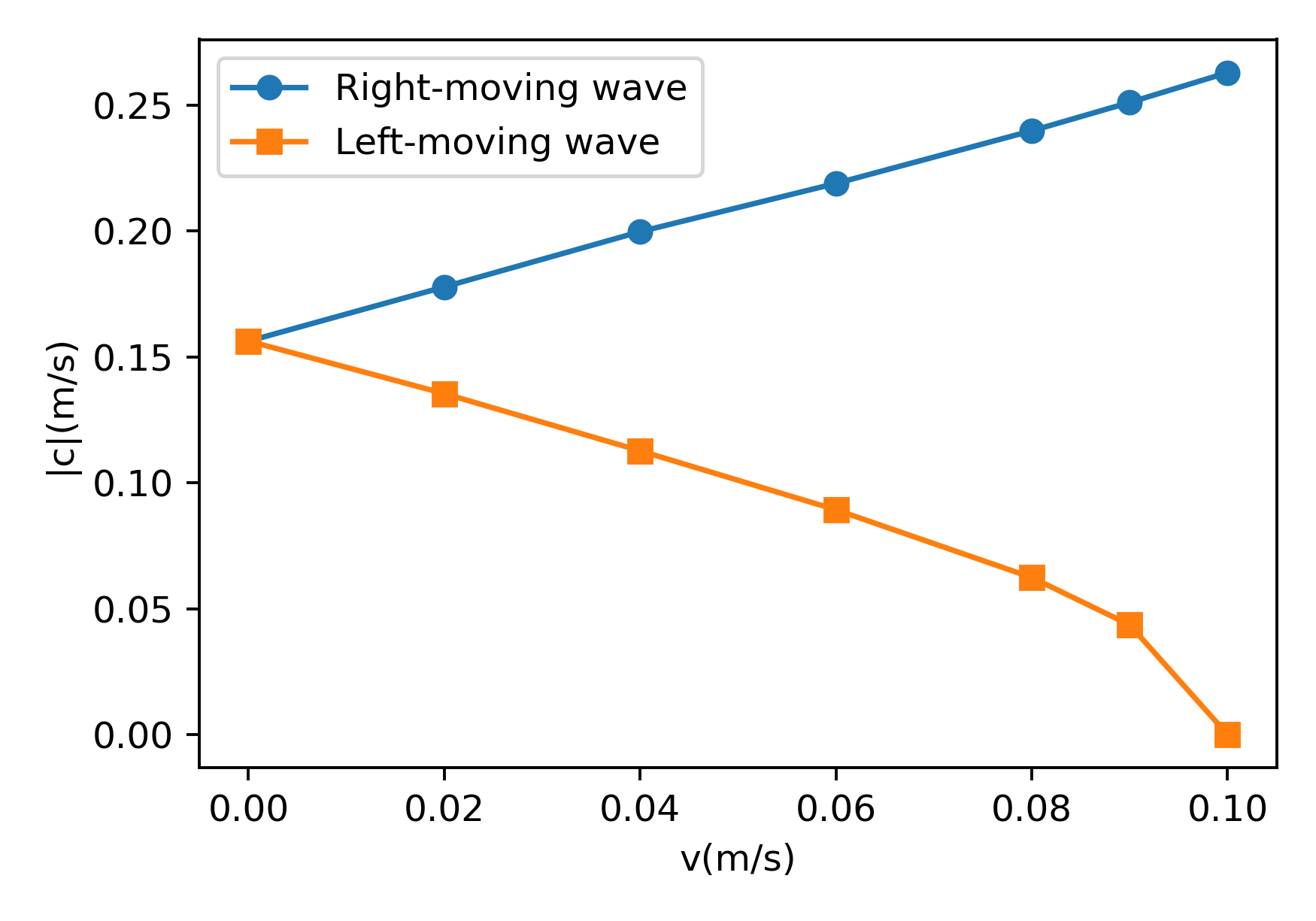}
   \end{subfigure}
	 \caption{ Left: Position of the wave profiles for different wind speed. Right: Propagation speed of the traveling wave profiled depending on the wind speed.
  }
	 \label{fig:velocities_w}
 \end{figure}

The propagation speed of the travelling wave in the opposite direction to the wind decreases approximately linearly until a certain threshold is reached.  Then, the speed drops to zero---in our case at $v=0.1$ m/s.  For this speed, which depends on the fixed parameters in Tab.~\ref{table:general-parameters}, the solution switches from two travelling waves to only one.  The travelling wave propagating in the opposite direction of the wind stops.

   \begin{figure}
	 \centering
   \begin{subfigure}[b]{0.45\textwidth}
		 \includegraphics[width=\textwidth]{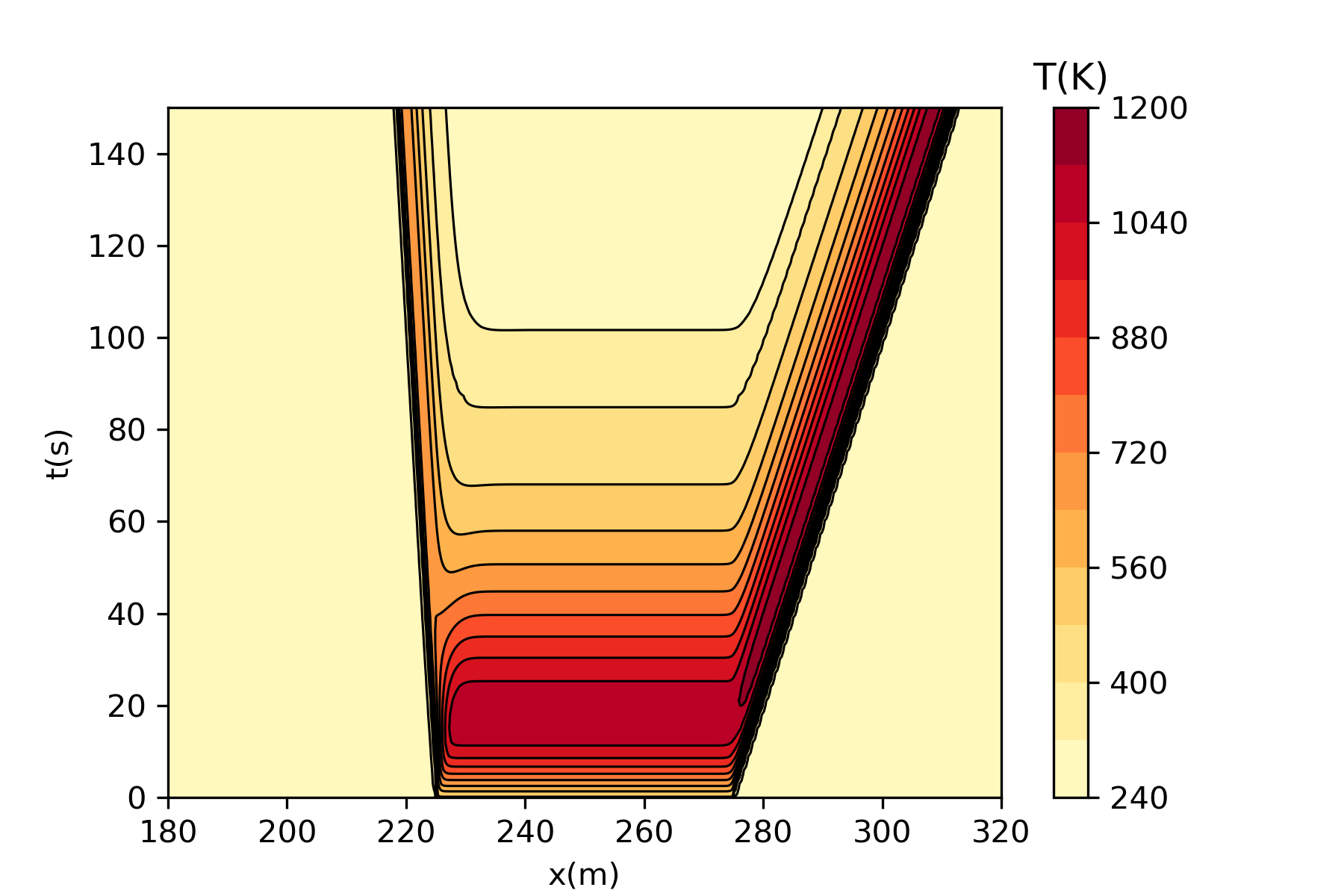}
   \end{subfigure}
   \begin{subfigure}[b]{0.45\textwidth}
		 \includegraphics[width=\textwidth]{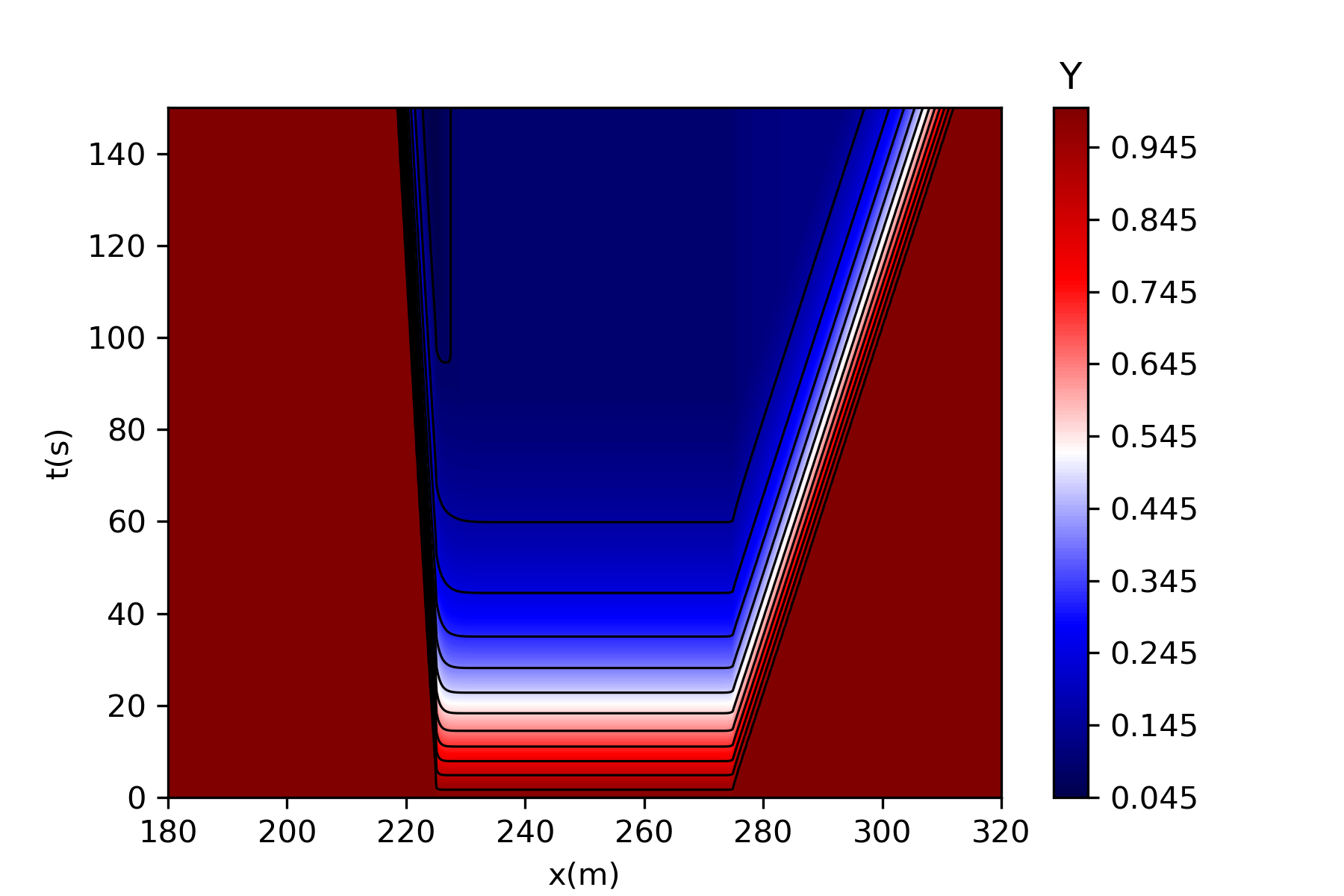}
   \end{subfigure}
   \begin{subfigure}[b]{0.45\textwidth}
		 \includegraphics[width=\textwidth]{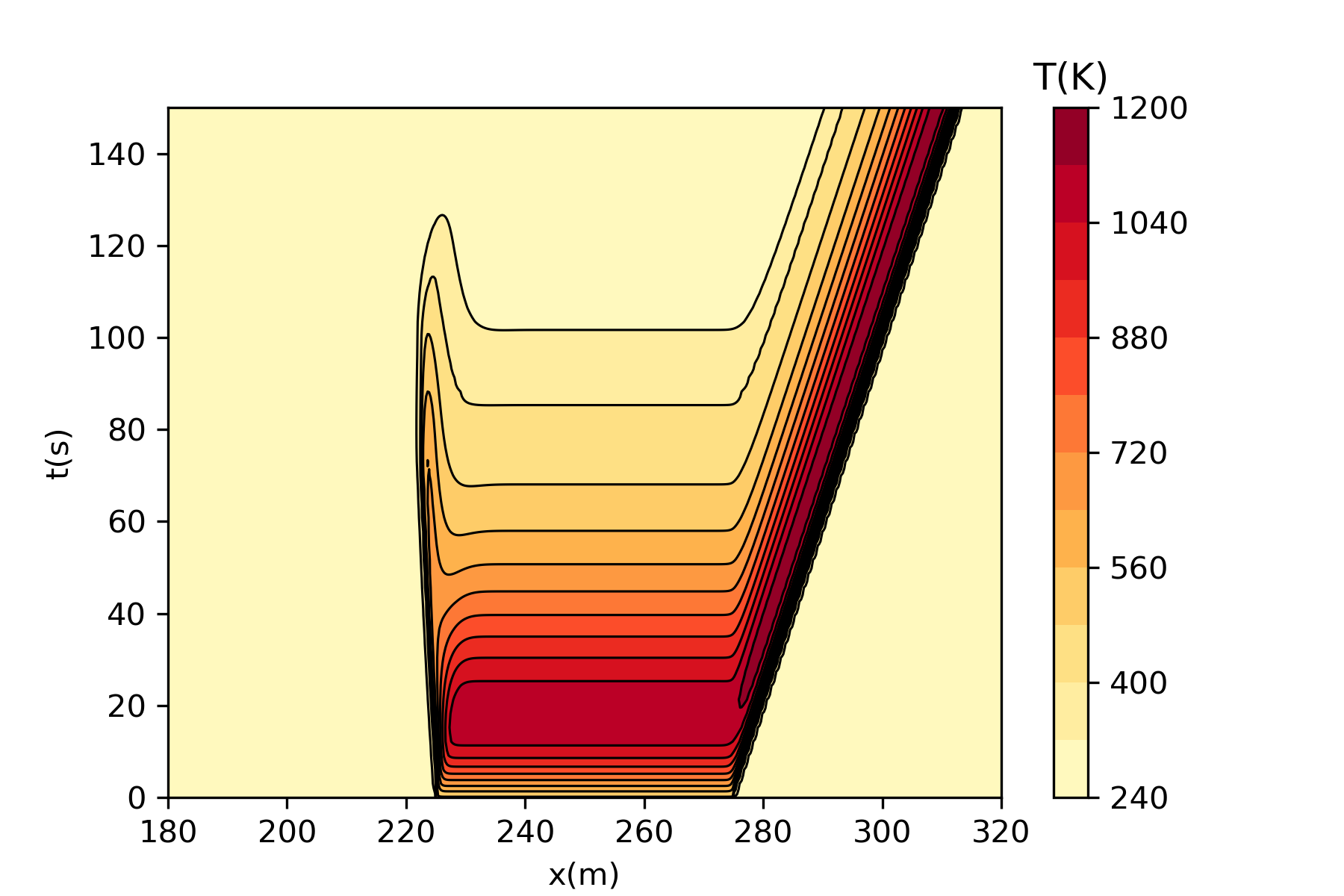}
   \end{subfigure}
   \begin{subfigure}[b]{0.45\textwidth}
		 \includegraphics[width=\textwidth]{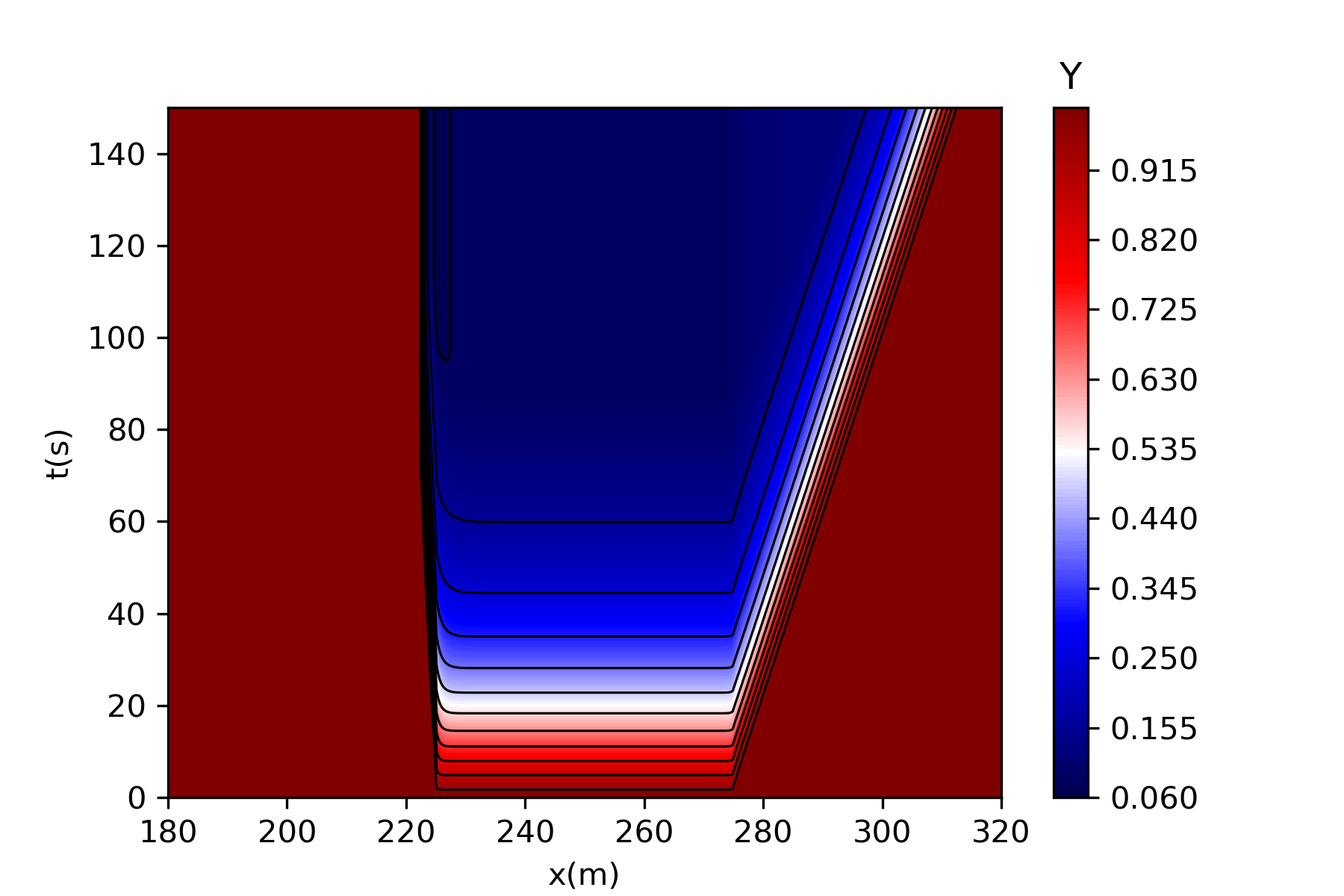}
   \end{subfigure}
	 \caption{ Temperature and biomass evolution for different wind speed $v=0.09$ m/s (top) and $v=0.093$ m/s (bottom) and the parameters of Tab.~\ref{table:general-parameters}.
}
	 \label{fig:evol_detail}
 \end{figure}

Fig.~\ref{fig:evol_detail} compares the evolution of temperature and biomass over time for a wind speed with two travelling waves and with only one.
For $v=0.09$ m/s, the travelling wave propagating opposite to the wind direction still exists, even if the propagation speed is slow.
In contrast, for $v=0.093$ m/s, the wave propagating against the wind stops spreading after some time.
The temperature profile does not form towards a travelling wave profile and there is only one wave in the direction of the wind, moving to the right-hand side.

\subsection{Two-dimensional wildfire simulation}

We simulate wildfire in a forest ecosystem with a domain of $[0,500]\times[0,500]$ m$^2$. The forest features spatially heterogeneous initial biomass organised in patches, defined as a multi-scale random distribution. Random maps of different scales (i.e. 2, 5, 10 and 100 m) are combined to produce the initial condition for the biomass, $Y(x,y,0)$. The initial biomass distribution resembles areas with reforestation, for example in areas with previously dominant monoculture forest. The initial condition for the temperature is given by

\begin{equation}\label{eq:icreal}
   T(x,y,0) = 
		\left\{
		\begin{array}{lll}
      	470 \mbox{ K}  &\mbox{if} &  r(x,y)< 15 \mbox{ m} \\
        300 \mbox{ K} & \multicolumn{2}{l}{\mbox{otherwise}}  \\  
		\end{array}
		\right. 
\end{equation}
with 
\begin{equation}\label{colliding3}
r(x,y)=\sqrt{(x-x_1)^2+(y-y_1)^2}
\end{equation}
with $(x_1,y_1)=(50,50)$ m.  The computational mesh has 500 cells in each Cartesian direction. The simulation runs for 500 seconds. The thermal diffusivity is set to $k=3$ kW·m$^{-1}$· K$^{-1}$ and the wind velocity is set to $\mathbf{v}=(0.5,0.5)$ m/s; default values from Tab. \ref{table:general-parameters} are used for the other parameters. 

Simulation results are plotted in Fig.~\ref{fig:realistic_solutions}, where at the top, temperature contours and at the bottom, the burned biomass contours at the end of the simulation are overlaid with the terminal and the initial biomass distribution, respectively. The burnt area near to the ignition point preserves its symmetry more or less, but once the fire reaches a patch with low biomass, the symmetry is broken. The wildfire propagation is slowed down in this region. As the wildfire propagates through the domain, encountering more heterogeneity, the symmetry breaks down even further. The expansion towards the north is severely limited by patches of low biomass that draw a ``border'' along the diagonal of the domain. Besides, when the fire front reaches the patches with lower initial biomass, the temperature of the fire front decreases and thus the severity of the fire, as described in Sec.~\ref{sec:terminalbiomass}. This is how firebreaks and prescribed burn work in wildfire management. This slower propagation speed in areas with lower biomass was discussed in Sec.~\ref{sec:travellingwaves} for the advection-free model. The linearised wave speed in Eq.~\eqref{eq:lin_wavespeed} depends on the biomass. Fig.~\ref{fig:ODEvsPDE_phaseY} also shows this dependency of the advection speed on the biomass. We further observe that the wind speed is sufficiently high to stop the wildfire propagation in the opposite direction. This is consistent with the one-dimensional wildfire model results in Fig.~\ref{fig:velocities_w}.

 \begin{figure}
	 \centering
		 \includegraphics[width=0.7\textwidth]{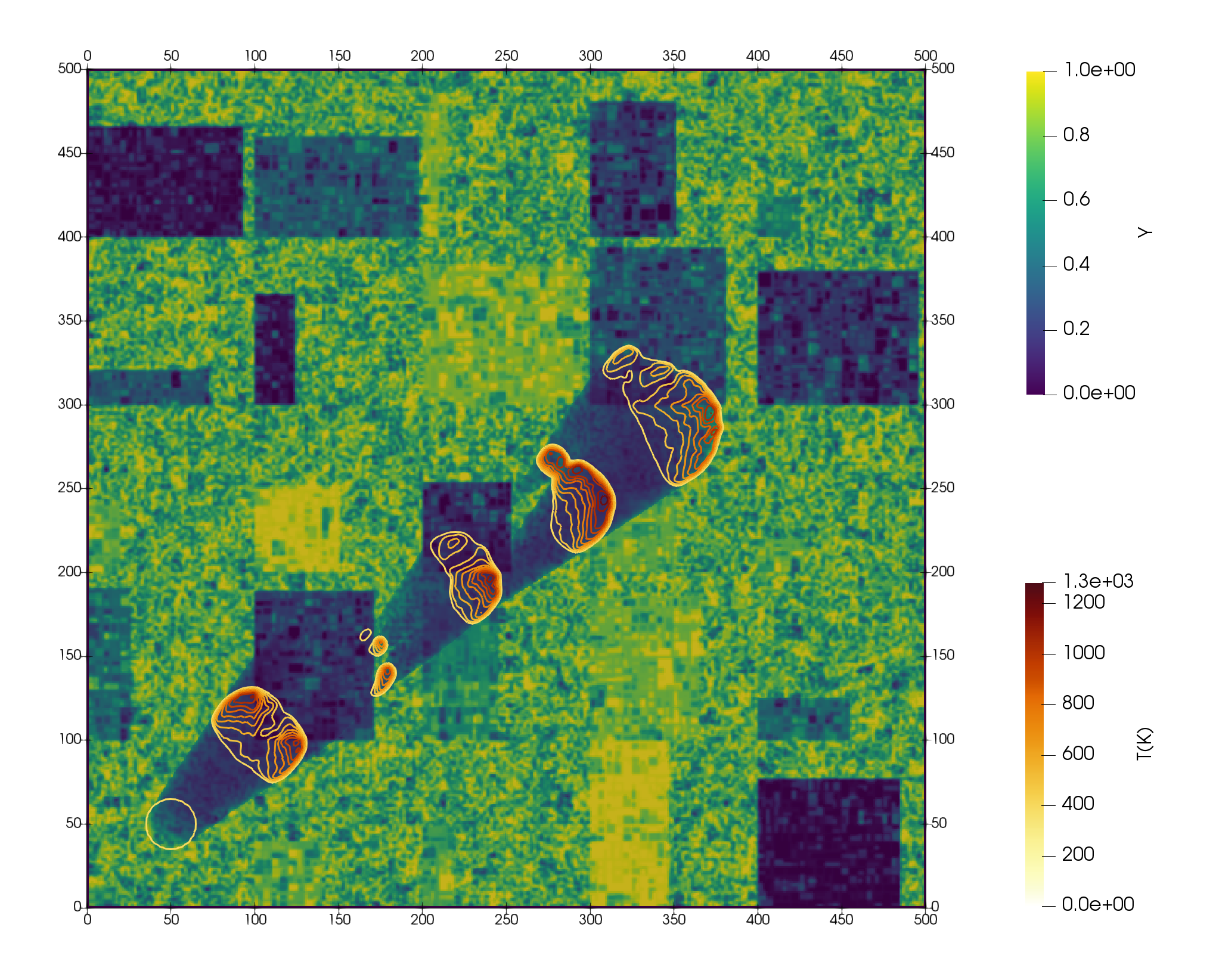}
   \includegraphics[width=0.7\textwidth]{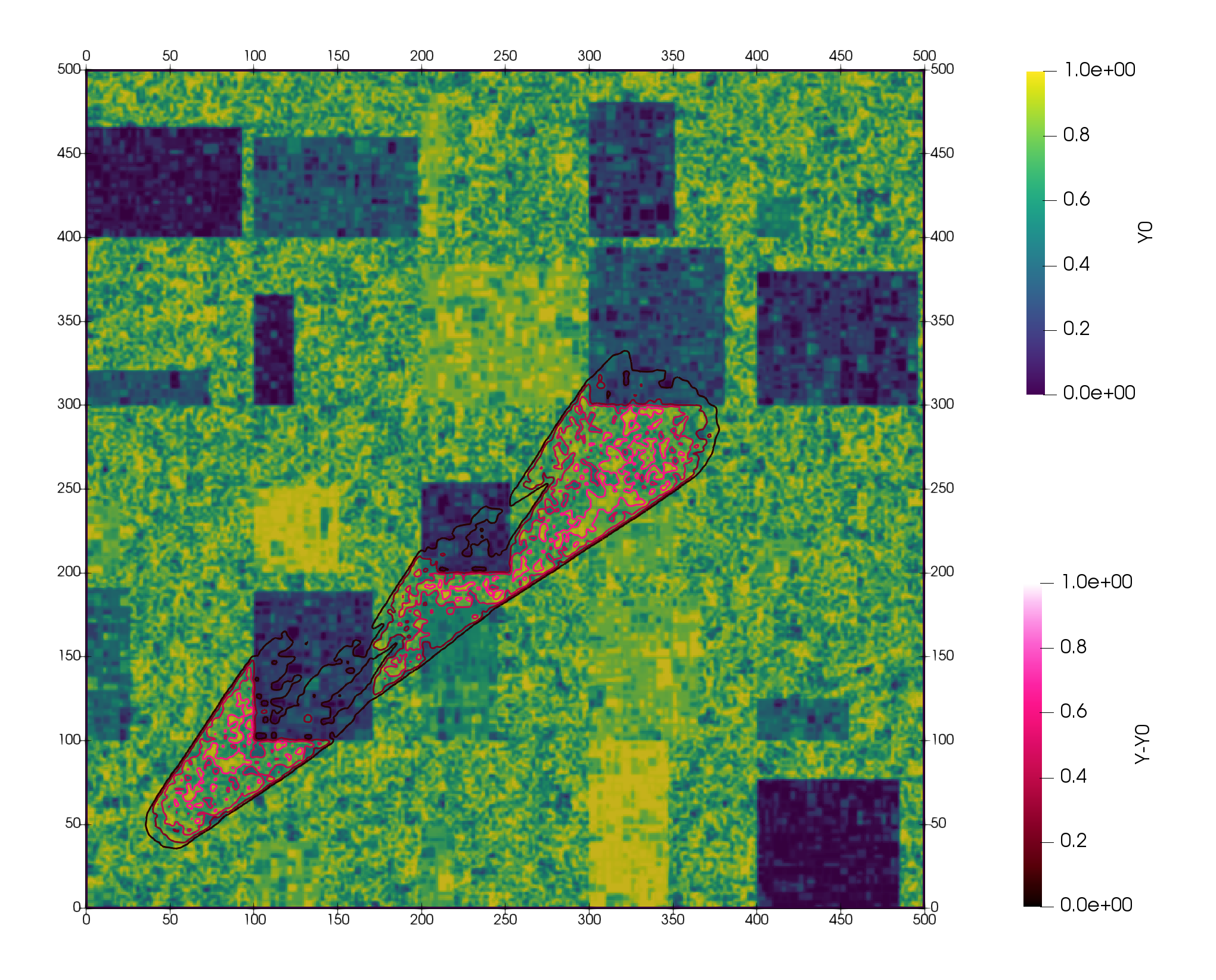}
	 \caption{ Two-dimensional numerical simulation of wildfire propagation through a heterogeneous domain. Top: Temperature contours (every 100 s) and terminal biomass. Bottom: Initial biomass and difference between terminal and initial biomass. Results are shown for 500 s.
  }
	 \label{fig:realistic_solutions}
 \end{figure}

\section{Conclusions} \label{sec:conclusions}

The advection--diffusion--reaction wildfire model is a non-linear system of several coupled mechanisms between the energy of a fire, represented by its temperature in the context of this model, and the biomass. Sub-models including selected mechanisms form a hierarchical model family and shed light on the complex mechanisms one by one. Numerical simulations of the sub-models support the analytical findings and extend the insights by providing non-formal limit case observations. 

In our discussion, we focus on three interlinked properties that we consider important for wildfire management applications: the terminal biomass, the maximum temperature and the propagation speed of the wildfire.  The terminal biomass indicates the fraction of the forest ecosystem that went unharmed by the wildfire and is relevant to assess the damage and the potential of recovery, see \cite{falk_mechanisms_2022, hood_fire_2018}.   The maximum temperature is an indicator of damage, where higher temperatures will destroy more biomass.  The propagation speed of the wildfire influences the amount of burnt area per time.

The spatially lumped ordinary differential equation model in Sec. \ref{sec:ODE} provides insight into the relevance of the reaction parameters on the terminal biomass and the maximum temperature.  In Sec. \ref{sec:combustionfree} and \ref{sec:advectionfree}, diffusion and advection mechanisms are added to obtain spatially explicit sub-models.

A sensitivity analysis of the lumped ordinary differential equation model shows a high sensitivity of the tipping line for the maximum temperature to the activation temperature, the bulk density and the heating value. This tipping line connects the two relevant aspects of terminal biomass and maximum temperature: shifting the tipping line further up means having a lower maximum temperature and a possibly higher remaining biomass after the fire---see Fig.~\ref{fig:sensitivity_tipping}. Meanwhile, the initial biomass has a rather low influence on the terminal biomass---see Fig.~\ref{fig:ODE_example11}, Fig.~\ref{fig:ODEvsPDE_phaseY} and Fig.~\ref{fig:realistic_solutions} for the reaction--diffusion model.  In other words, according to the model, reducing the biomass before the occurrence of a wildfire will not help the forest ecosystem to recover more quickly afterwards. 
However, the analysis of the reaction--diffusion model reveals one advantage of reducing the initial biomass: a lower initial biomass might lead to a reduced wildfire propagation speed---see Fig.~\ref{fig:speeds_bounds} and Fig.~\ref{fig:ODEvsPDE_phaseY}. We found a surprisingly good approximation of the propagation speed by linearisation techniques combined with fixing the biomass $Y$, compare Eq.~\eqref{eq:lin_wavespeed} and Fig.~\ref{fig:ODEvsPDE_phaseY}.  
Diffusion drives the temperature to propagate as a travelling wave with a fixed wave profile.  The propagation speed depends on the reaction parameters, the wind speed, and the diffusion parameter. 
A strong wind affects both the terminal biomass and the wildfire propagation speed. In the direction opposite to the wind, the terminal biomass is smaller and the propagation speed is slower, which might form a natural fire break.  In the  wind direction, we have a faster wildfire propagation but a higher terminal biomass. 
The influence of the wind is therefore ambiguous: on the one hand, the wind enforces the wildfire in the wind direction and leads to a much faster propagation and larger burnt area in a shorter time; however, it also has a potential benefit of reducing damage to vegetation, resulting in higher terminal biomass.  On the other hand, the area opposite to the wind direction remains unaffected, which serves as a small advantage compared to scenarios with no wind. We observed this model property as well in the two-dimensional simulation in Fig.~\ref{fig:realistic_solutions}.

The wind-driven advection mechanism in the present ADR wildfire model is a rather simplified representation of real-world processes.  In our opinion, it also presents the most interesting area of future research activities.  From a mathematical perspective, the shift of the system's behaviour depending on the wind speed is an emergent property that we observe numerically.  A full proof and analysis of travelling waves for the full system is still an open problem. Further investigations into the connection between the approximated models and the full model may concretise the quality of the linearised propagation speed.

With regard to model development, the influence of the wind on the fire dynamics could be simulated with higher physical fidelity by coupling the ADR wildfire model to an atmospheric model. This coupling is computationally challenging but promises a more precise prediction of the propagation direction and speed.  Such a coupled model in combination with real-world forest structures in a two-dimensional setting would provide the option to test wildfire management strategies like firebreaks or planting formations. It is also crucial for understanding plume-dominated fires, which are expected to become more frequent due to climate change.

\section*{Acknowledgments}

The work of C.R. and I.O. is funded as part of the inter-fire project by the Seed Funding Programme of Technische Universit\"at Braunschweig, {\it 2022 Interdisciplinary Collaboration Strengthening Interdisciplinarity --- Expanding Research Collaboration}. The work of A.N. is funded by the Ministerio de Ciencia e Innovación (Agencia Estatal de Investigación) under project-nr. PID2022-141051NA-I00 and by the Fundación Universitaria Antonio Gargallo under project-nr. 2021/B010. The work of A.N. has also been partially funded by Gobierno de Aragón, Spain through Fondo Social Europeo (T32-23R, Feder 2023–2025). The research stay by A.N. at Technische Universit\"at Braunschweig that allowed for this collaboration was funded by the Programa de Estancias Fundación CAI-Ibercaja/Universidad de Zaragoza.


\section*{Author contributions}

C.R.: conceptualisation, investigation, simulation, analysis, writing. A.N.: conceptualisation, investigation, simulation, analysis, writing. I.O.: conceptualisation, investigation, writing.  All authors reviewed the manuscript.

\appendix

\section{Verification of the numerical solver}

The numerical solvers are explained in Sec.~\ref{section:solver}.
Here, we present verification results for sub-models.

\subsection{Verification case 1: spatially lumped model}\label{sec:validation1}

The system in Eq. (\ref{eq:modelODE}) is solved for the initial conditions $T_0=470$ K and $Y_0=1$ using the numerical model in Section \ref{section:solver} with time steps $\Delta t=\left\{ 1,0.1,0.01\right\}$ s (i.e. a implicit RK2 time integrator), as well as with a simple implicit Euler time integrator. The simulation time is $T=150$ s. The computed evolution in time of the temperature and biomass is depicted in Figure  \ref{fig:ODE_validation1}. The trajectory of the solutions is also depicted in the phase space portrait in Fig.~\ref{fig:ODE_map}. From the figures, we observe that the numerical errors in temperature and biomass are negligible for the implicit RK2 method. The solver accurately reproduces the solution in the phase space diagram.

The numerical errors are computed using the $L_{\infty}$ error norm inside the full domain and subdomain $t=[0,75]$ s, and are presented in Tab. \ref{table:ODE_validation1} and \ref{table:ODE_validation2}, respectively. Note that the  subdomain $t=[0,75]$ s does not include the deactivation of the combustion term, thus the solution is regular inside it. The implicit RK2 approach provides remarkably smaller values of errors than the implicit Euler integrator. For the implicit Euler method, the largest error is obtained at the point of  maximum temperature. On the other hand, the implicit RK2 integrator offers a high accuracy in smooth regions (e.g. near the maximum temperature), with the largest error at the point when the combustion term switches off (i.e. when the solution losses its regularity). Thus, we observe large differences between Tab. \ref{table:ODE_validation1} and \ref{table:ODE_validation2}, showing better convergence rates in the latter where the solution is smooth. We evidence that the proposed implicit RK2 integrator in Section \ref{section:solver} possesses good convergence properties and is sufficiently accurate for the applications herein considered.

  \begin{figure}
	 \centering
		 \includegraphics[width=0.99\textwidth]{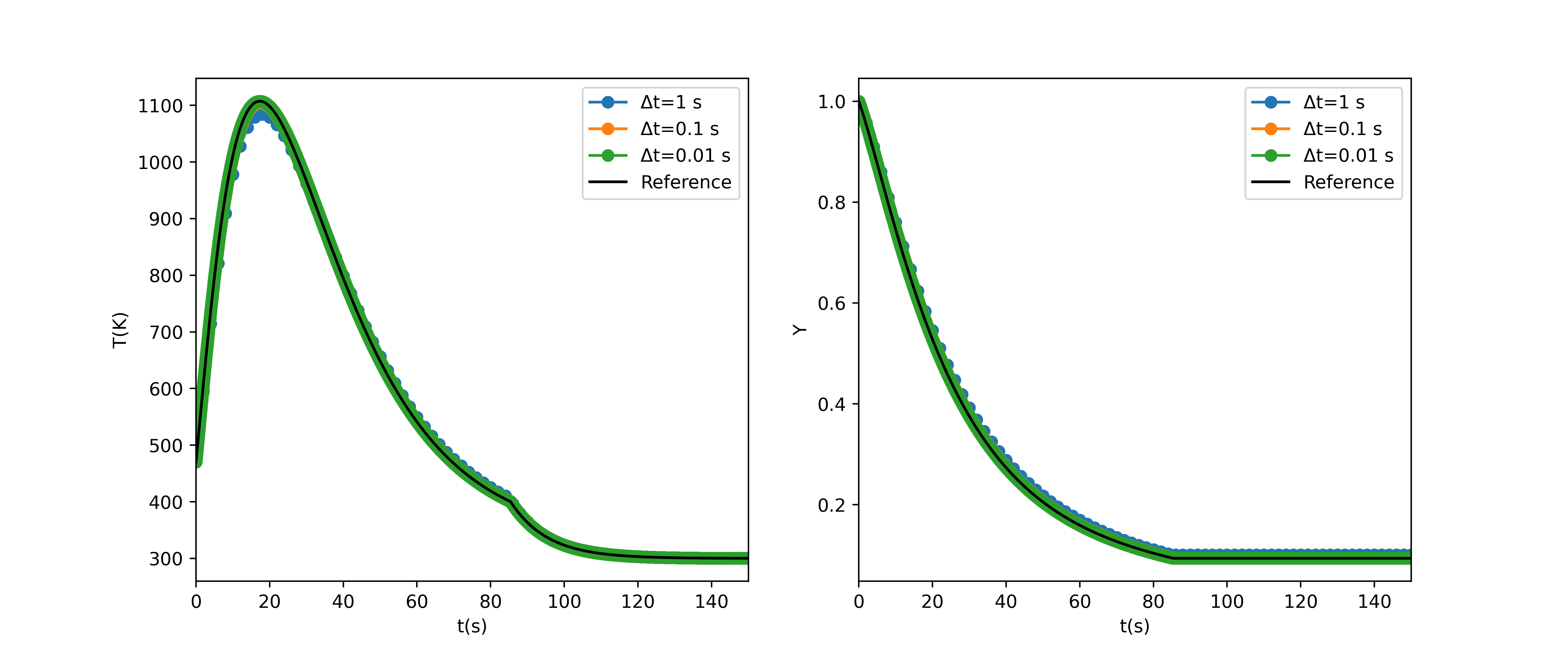}
    \includegraphics[width=0.99\textwidth]{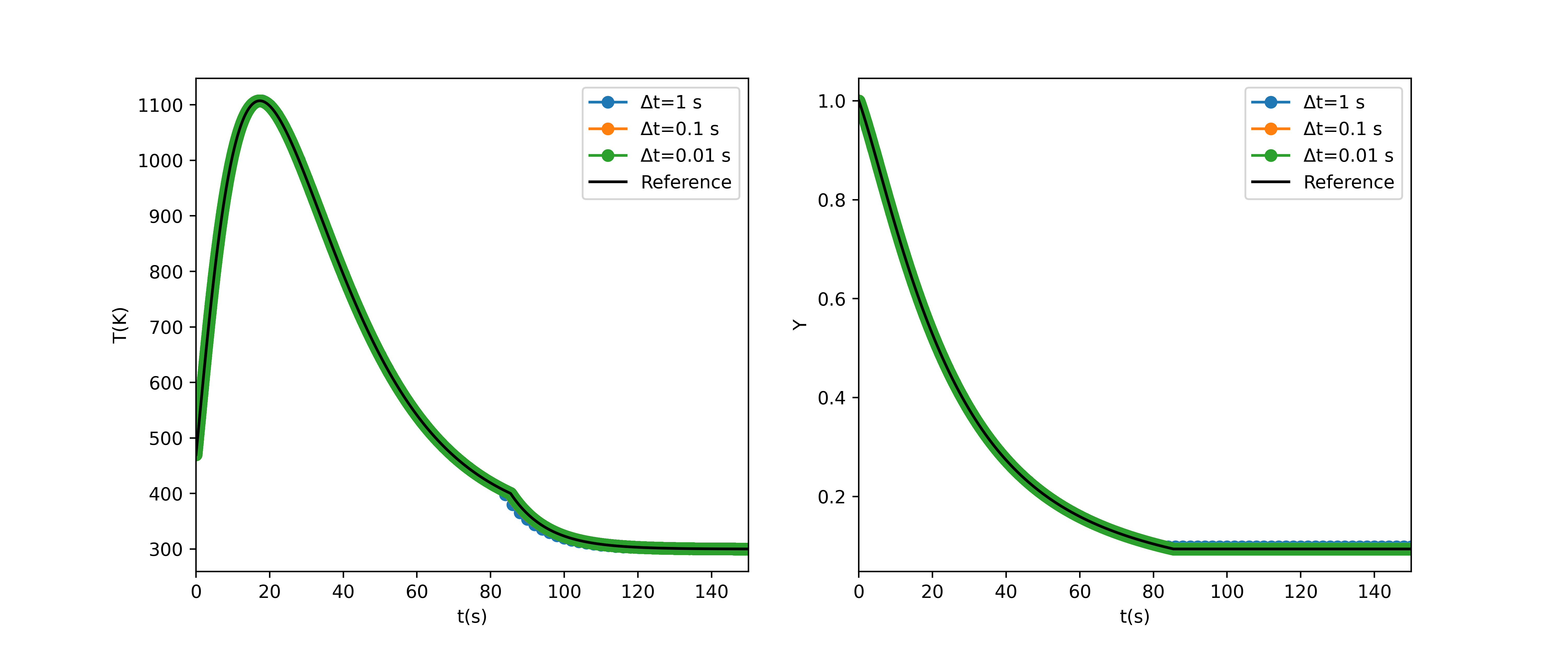}
	 \caption{Numerical solution for $T$ (left) and $Y$ (right) computed by the implicit Euler solver (top) and the implicit RK2 solver (bottom). The reference solution is depicted with black solid line.}
	 \label{fig:ODE_validation1}
 \end{figure}

   \begin{figure}
	 \centering
		 \includegraphics[width=0.44\textwidth]{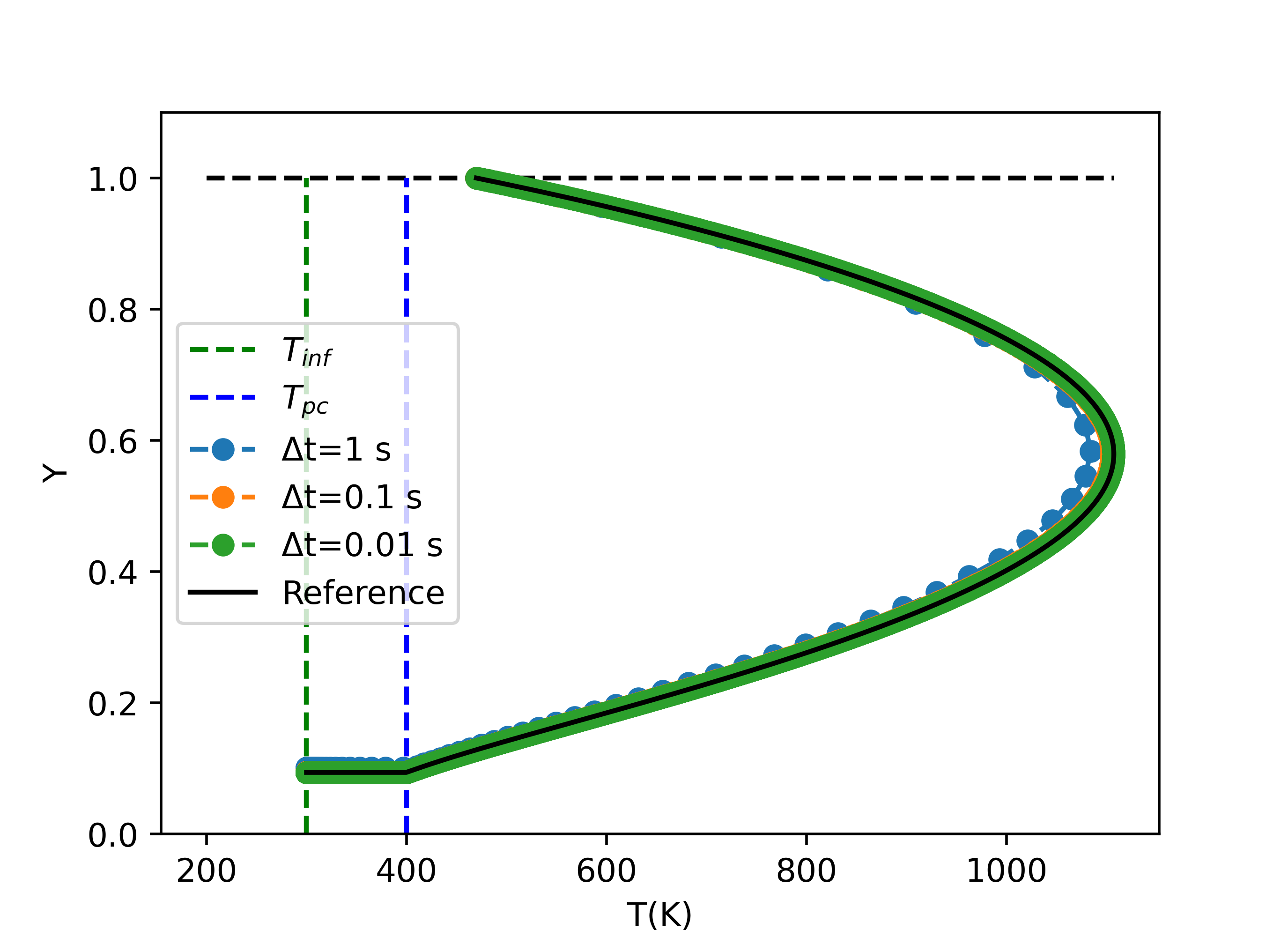}
   \includegraphics[width=0.44\textwidth]{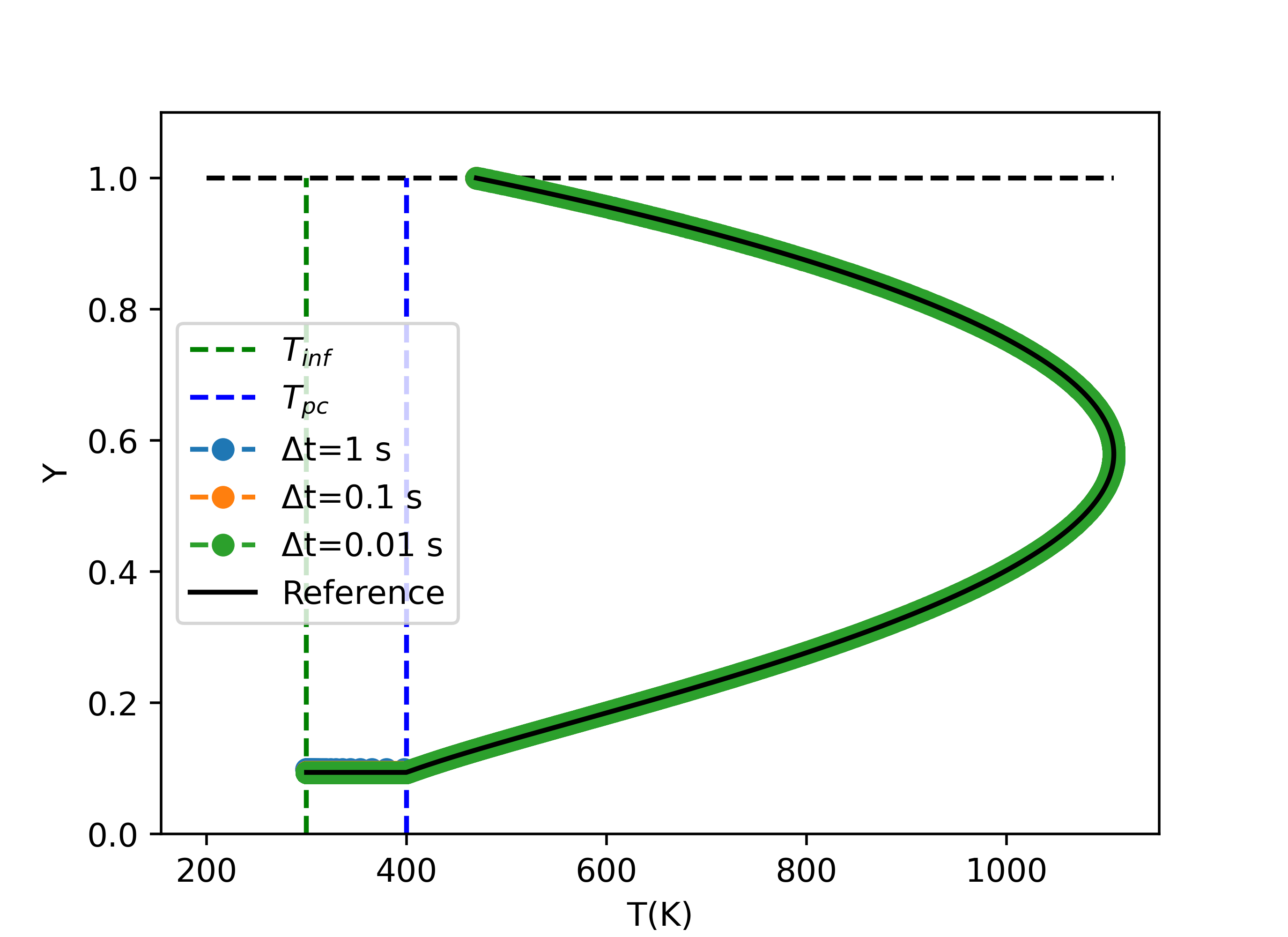}
	 \caption{Computed phase space portrait by the the implicit Euler solver (left) and the implicit RK2 solver (right). The reference solution is depicted with black solid line.}
	 \label{fig:ODE_validation2}
 \end{figure}

 \begin{table}
\centering
\scalebox{0.99}{
\begin{tabular}{ccccc}
& \multicolumn{2}{c}{Implicit Euler} & \multicolumn{2}{c}{Implicit RK2} \\
\hline
$\Delta t$   & $L_{\infty}(T)$  & $L_{\infty}(Y)$ & $L_{\infty}(T)$  & $L_{\infty}(Y)$ \\
\hline
1.0 & 32.71 & 1.79e-02& 14.61  & 4.40e-03  \\
0.1 & 3.38 & 1.79e-03 & 9.55e-01  & 2.44e-04  \\
0.01 & 0.33 & 1.78e-04 & 8.48e-02  & 2.14e-05\\
\end{tabular}}
\caption{Numerical errors for $T$ and $Y$ measured using the $L_{\infty}$ norm in the full domain. }
\label{table:ODE_validation1}
\end{table}

 \begin{table}
\centering
\scalebox{0.99}{
\begin{tabular}{ccccc}
& \multicolumn{2}{c}{Implicit Euler} & \multicolumn{2}{c}{Implicit RK2} \\
\hline
$\Delta t$   & $L_{\infty}(T)$  & $L_{\infty}(Y)$ & $L_{\infty}(T)$  & $L_{\infty}(Y)$ \\
\hline
1.0 & 32.71 & 1.79e-02& 9.74e-01  & 7.81e-04  \\
0.1 & 3.38 & 1.79e-03 & 1.29e-02  & 8.29e-06  \\
0.01 & 0.33 & 1.78e-04 & 2.35e-04  & 9.03e-08 \\
\end{tabular}}
\caption{Numerical errors for $T$ and $Y$ measured using the $L_{\infty}$ norm in the subdomain $t=[0,75]$ s. }
\label{table:ODE_validation2}
\end{table}

\subsection{Verification case 2: advection-diffusion-reaction model}\label{sec:validation2}

In this case, we do not consider combustion ($A=0$) and we will assume $k_t(T)=k$ constant. We solve

  \begin{equation} \label{eq:case1}
     \left\{ \begin{array}{l}
       \frac{\partial T}{\partial t}  + v\frac{\partial T}{\partial x}   =  \alpha \frac{\partial^2 T}{\partial x^2} -  \beta(T-T_{\infty})  \\
 \frac{\partial Y}{\partial  t}=0
     \end{array}\right.
 \end{equation}
 with $\alpha=k/(\rho_0 {C})$ the thermal diffusivity and $ \beta=h/(\rho_0 {C})$, using the numerical methods explained in Section~\ref{section:solver}. We set the following initial condition
   \begin{equation} \label{eq:case_validation2ic}
       T_0(x)=300+100\exp{(-0.001(x-x_0)^2)}
 \end{equation}
 which allows to derive an exact solution for the temperature
  \begin{equation} \label{eq:case1sol}
       T(x,t)= 300+\frac{1}{\sqrt{1+0.004\alpha t}}\exp{\left(- 0.001\frac{(x-x_0-vt)^2}{1+0.004\alpha t}\right)}\exp{(- \beta t)}
 \end{equation}

For the simulation, we choose the following parameters: {$v=5$ m/s, $\rho_0=1$ kg/m$^3$, $C=1$ \hbox{kJ·kg$^{-1}$·K$^{-1}$ },  $k=10$ kW m$^{-1}$ K$^{-1}$, $ h=0.01$ kW·m$^{-3}$·K$^{-1}$, 
and $x_0=250$ m}. The spatial domain is $\Omega=[0,1000]$ m and the final time is set to $t=100$. The solution is computed using $N=100$ computational cells and a Courant--Friedrichs--Lewy number of $\mathsf{CFL}=0.1$. Periodic boundary conditions are set. The numerical solution is computed using a 1-st, 3-rd, 5-th and 7-th order advection scheme.

 Fig.~\ref{fig:validation_case2} shows the computed temperature profile at $t=100$ s. The lower order schemes (1-st and 3-rd) introduce a high numerical diffusion, which would overestimate the conduction and radiation heat transfer processes. On the other hand, the 5-th and 7-th order schemes provide accurate results for this grid. The results evidence that the advective terms must be computed with high order of accuracy to avoid unphysical diffusion.

\begin{figure}
	 \centering
		 \includegraphics[width=0.99\textwidth]{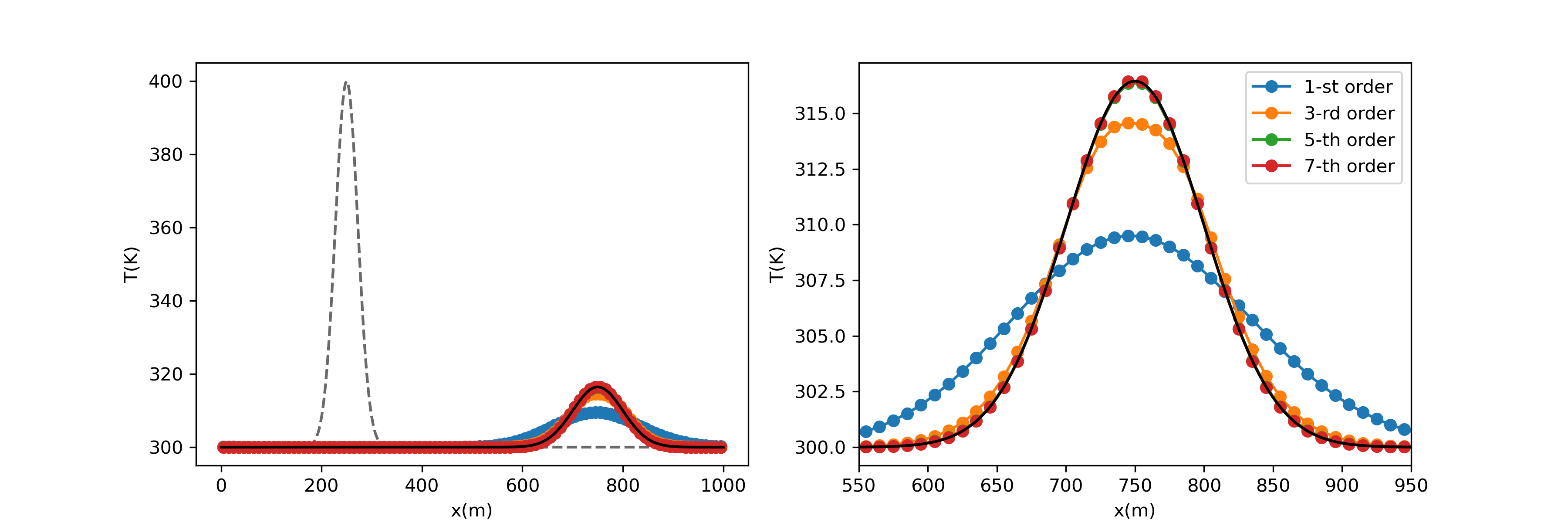}
	 \caption{Numerical solution computed by a 1-st, 3-rd, 5-th and 7-th order, compared with the exact solution (black solid line). Initial condition depicted with dashed line.}
	 \label{fig:validation_case2}
 \end{figure}

\subsection{Dependency of the travelling wave speed on the discretisation}

The simulation of the reaction--diffusion equation in Eq.~\eqref{eq:model_pure_rd} shows travelling wave solutions.
The wave speed there depends on the number of discretisation points used in the spatial discretisation, see Fig.~\ref{fig:speed_dx}.

 \begin{figure}
	 \centering
 	 \includegraphics[width=0.6\textwidth]{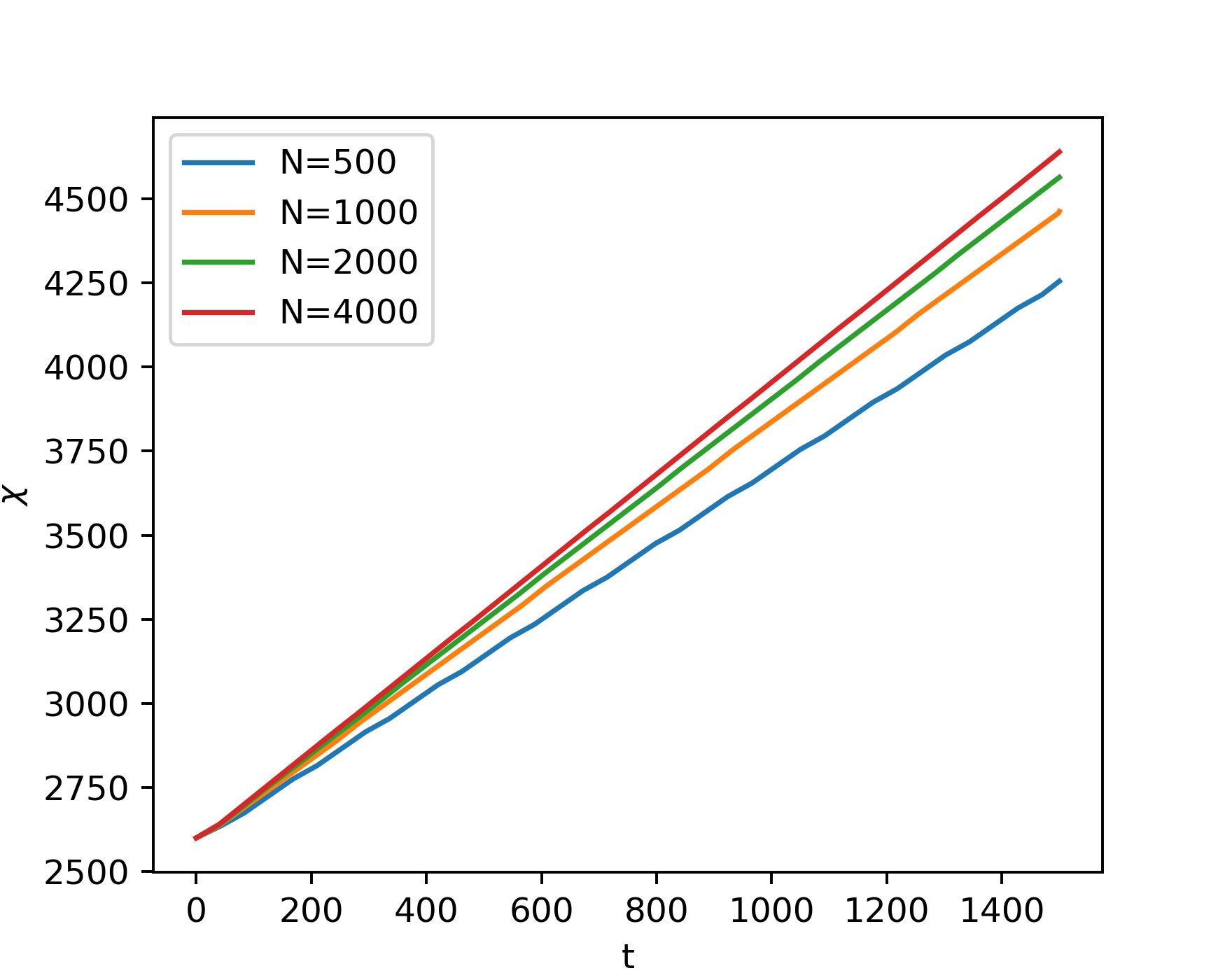}
   	 \caption{Position of the wave front depending on the number of space discretisation points. }
	 \label{fig:speed_dx}
 \end{figure}

 In the simulations the parameter $h$ is given by $h=1$ {kW·m$^{-3}$·K$^{-1}$}.
 The wave speed increases with a larger number of spatial discretisation points with a tendency towards a threshold wave speed.
 This phenomenon is known for the discretisation of wave-like solutions, compare \cite[p. 169f.]{vreugdenhil_numerical_1994}.

\subsection{Configuration of the simulations}

Details for the simulations are given in Table~\ref{table:summary-simulations}, including the domain size ($\Omega$), the final time ($T$), the initial (IC) and boundary conditions (BC), the number of cells ($N$), the $\mathsf{CFL}$ number to compute the time step and the order of accuracy for advection. The section where each of the simulations is located is also included in the table, as well as the case identifier, if any.

Regarding the initial condition for $Y$ in the case in Section 4.1 (*), it is defined by overlapping random distributions of $Y$ with different length scales. We consider four  layers composed of  quadrilateral patches of maximum size of 2, 5, 10 and 100 m. The size of the quadrilateral patches is randomly selected, as well as the magnitude of $Y$ inside each patch. The final value for $Y$ taken as IC is given by the sum of the values computed in each map $j$, denoted by $Y_j$, i.e. $Y(x,y)=Y_1(x,y)+Y_2(x,y)+Y_3(x,y)+Y_4(x,y)$. Note that the final value for $Y$ must be within $[0,1]$ and thus adequate bounds when generating the random numbers must be considered.

 \begin{table}
\centering
\scalebox{0.99}{
\begin{tabular}{lcccccccc}
\hline
Section   & Case & $\Omega$ & $T$ & IC &BC & $N$ & $\mathsf{CFL}$ & Order\\
\hline
3.2.1 &  &  $[0,1000]\times[0,1000]$& 1000 & Eq. \eqref{eq:ic1} &Periodic & $100^2$ & 0.1 & 7 \\
3.2.2 &  &  $[0,1000]\times[0,1000]$& 1000 & Eq. \eqref{eq:ic1} & Periodic & $100^2$ & 0.1 & 7 \\
3.3.1 & A &  $[0,500]$& 800 &  Eq. \eqref{eq:ic}& Transmissive & 2000 & 0.1 & 7 \\
3.3.1 & B &  $[0,500]$& 800 &  Eq. \eqref{eq:ic}& Transmissive & 2000 & 0.1 & 7 \\
4.1 & C &  $[0,500]$& 800 &  Eq. \eqref{eq:ic}& Transmissive & 2000 & 0.1 & 7 \\
4.1 &  &  $[0,500]\times[0,500]$& 500 &  Eq. \eqref{eq:icreal}(*) & Transmissive & $500^2$ & 0.1 & 7 
\end{tabular}}
\caption{Summary and configuration details of the test cases presented. }
\label{table:summary-simulations}
\end{table}

\printbibliography

\end{document}